\newtheorem{proposition}{Proposition}
\newtheorem{theorem}[proposition]{Theorem}
\newtheorem{lemma}[proposition]{Lemma}
\theoremstyle{remark}
\newtheorem{remark}[proposition]{Remark}
\theoremstyle{definition}
\newtheorem{definition}[proposition]{Definition}
\numberwithin{equation}{section}
\numberwithin{proposition}{section}
\numberwithin{figure}{section}
\numberwithin{table}{section}
\newcommand{\Z}{\mathbb{Z}}
\newcommand{\N}{\mathbb{N}}
\newcommand{\R}{\mathbb{R}}
\newcommand{\E}{\mathbb{E}}
\renewcommand{\P}{\mathbb{P}}
\newcommand{\ep}{\varepsilon}
\newcommand{\eps}{\varepsilon}
\renewcommand{\le}{\leqslant}
\renewcommand{\ge}{\geqslant}
\renewcommand{\subset}{\subseteq}
\newcommand{\la}{\left\langle}
\newcommand{\ra}{\right\rangle}
\newcommand{\Ll}{\left}
\newcommand{\Rr}{\right}
\renewcommand{\d}{\mathrm{d}}
\DeclareMathOperator{\tr}{tr}
\renewcommand{\bar}{\overline}
\renewcommand{\tilde}{\widetilde}
\newcommand{\td}{\widetilde}
\renewcommand{\hat}{\widehat}
\newcommand{\1}{\mathds{1}}
\newcommand{\mcl}{\mathcal}
\newcommand{\msf}{\mathsf}
\newcommand{\mfk}{\mathfrak}
\newcommand{\al}{\alpha}
\newcommand{\ga}{\gamma}
\newcommand{\de}{\delta}
\newcommand{\si}{\sigma}
\newcommand{\A}{{\mathcal{A}}}
\renewcommand{\H}{\mathsf{H}}
\newcommand{\M}{\mathbb{M}}
\newcommand{\dr}{\partial}
\newcommand{\n}{\mathbf{n}}
\newcommand{\vb}{\, \big \vert \, }
\begin{document}

\author[J.-C. Mourrat]{Jean-Christophe Mourrat}
\address[J.-C. Mourrat]{Courant Institute of Mathematical Sciences, New York University, New York, New York, USA, and CNRS, France}
\email{jcm777@nyu.edu}

\keywords{spin glass, Hamilton-Jacobi equation}
\subjclass[2010]{82B44, 82D30}
\date{\today}

\title{Nonconvex interactions in mean-field spin glasses}

\begin{abstract}
We propose a conjecture for the limit free energy of mean-field spin glasses with a bipartite structure, and show that the conjectured limit is an upper bound. The conjectured limit is described in terms of the solution to an infinite-dimensional Hamilton-Jacobi equation. A fundamental difficulty of the problem is that the nonlinearity in this equation is not convex. We also question the possibility to characterize this conjectured limit in terms of a saddle-point problem.
\end{abstract}

\maketitle

%
%
%
%
%
%



\section{Introduction}
\label{s.intro}


Let $(J_{ij})_{i,j \ge 1}$ be independent standard Gaussian random variables, and, for every $\si  = (\sigma_{1,1},\ldots,\si_{1,N}, \si_{2,1}, \ldots,\si_{2,N})\in \R^{2N}$, let
\begin{equation}  
\label{e.def.HN}
H_N(\sigma) := N^{-\frac 1 2 } \sum_{i,j = 1}^N J_{ij} \,  \sigma_{1,i} \, \si_{2,j}.
\end{equation}
The main goal of this paper is to study the large-$N$ behavior of the free energy
\begin{equation}  
\label{e.really.vanilla.FN}
\frac 1 N \E \log \int_{\R^{2N}} \exp \Ll( \beta H_N(\si) \Rr) \, \d P_N(\si),
\end{equation}
where $\beta \ge 0$ and $P_N$ is a ``simple'' probability measure over $\R^{2N}$. For convenience, we assume that there exist two probability measures $\pi_1$ and $\pi_2$ on $\R$ with compact support such that, for every $N \ge 1$,
\begin{equation}
\label{e.ass.P}
P_N = \pi_1^{\otimes N} \otimes \pi_2^{\otimes N}.
\end{equation}
Without loss of generality, we assume that the supports of $\pi_1$ and $\pi_2$ are subsets of $[-1,1]$. 
%
For every metric space~$E$, we denote by $\mcl P(E)$ the space of Borel probability measures on $E$, and, for every $p \in [1,\infty]$, by $\mcl P_p(E)$ the subspace of $\mcl P(E)$ of probability measures with finite $p$-th moment. We write $\de_x$ for the Dirac probability measure at $x \in E$. For every $\nu \in \mcl P(\R_+)$ and $r \in [0,1]$, we define
\begin{equation}
\label{e.def.F-1}
F^{-1}_\nu(r) := \inf \Ll\{ s \ge 0 \ : \ \nu \Ll( [0,s] \Rr)  \ge r \Rr\} ,
\end{equation}
and, for $U$ a uniform random variable over $[0,1]$, we write
\begin{equation}
\label{e.def.Xnu}
X_\nu := F_{\nu}^{-1}(U).
\end{equation}
Recall that the law of $X_\nu$ is $\nu$, and that this construction provides us with a joint coupling of all probability measures over $\R_+$. 
For every $\mu = (\mu_1, \mu_2) \in (\mcl P(\R_+))^2$, we denote by $\hat \mu \in \mcl P(\R_+^2)$ the law of the pair $(X_{\mu_1}, X_{\mu_2})$. Here is the main result of this paper.
\begin{theorem}
\label{t.main}
For every $t \ge 0$, we have
\begin{equation}  
\label{e.main}
\liminf_{N \to \infty} -\frac 1 N \E \log \int \exp \Ll( \sqrt{2t} H_N(\sigma) - N^{-1}  t |\si_1|^2 \, |\si_2|^2  \Rr) \, \d P_N(\si) \ge f(t,(\de_0,\de_0)),
\end{equation}
where $f = f(t,\mu) : \R_+ \times (\mcl P_2(\R_+))^2 \to \R$ is the solution to 
\begin{equation}  
\label{e.hj}
\Ll\{
\begin{aligned}
& \dr_t f - \int \dr_{\mu_1} f \, \dr_{\mu_2} f \ \d \hat \mu  = 0 & \quad \text{on } \ \R_+ \times (\mcl P_2(\R_+))^2,
\\
& f(0,\cdot ) = \psi & \quad \text{on } \ (\mcl P_2(\R_+))^2,
\end{aligned}
\Rr.
\end{equation}
and the initial condition $\psi$ is defined below in \eqref{e.def.psi}.
\end{theorem}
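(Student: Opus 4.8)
\emph{Proof plan.} The argument follows the Hamilton--Jacobi approach to mean-field spin glasses. For $\mu = (\mu_1,\mu_2) \in (\mcl P_2(\R_+))^2$, introduce the enriched free energy
\[
F_N(t,\mu) := -\frac1N \E\log\int \exp\Ll(\sqrt{2t}\,H_N(\si) - N^{-1}t\,|\si_1|^2\,|\si_2|^2 + H_N^{\mu}(\si)\Rr)\,\d P_N(\si),
\]
where $H_N^{\mu}$ is a perturbation Hamiltonian built from two independent Ruelle probability cascades, one per species, whose amplitude is small with $N$ but large enough to enforce the Ghirlanda--Guerra identities; for $\mu = (\de_0,\de_0)$ the perturbation is trivial, so $F_N(t,(\de_0,\de_0))$ is exactly the quantity on the left of \eqref{e.main}. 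The initial condition $\psi$ is defined as the ($N\to\infty$) limit of $F_N(0,\cdot)$: at $t=0$ the disorder is absent and the perturbed measure factorizes over sites, so this limit exists, is explicit, and is convex and Lipschitz on $(\mcl P_2(\R_+))^2$. One also checks that $F_N$ is convex and Lipschitz in $\mu$ uniformly in $N$ and in $t$ on compact sets.

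\emph{Time derivative.} The first substantive step is to differentiate $F_N$ in $t$ by Gaussian integration by parts in the couplings $(J_{ij})$. The covariance of $\sqrt{2t}\,H_N$ at two configurations $\si,\tau$ equals $2tN\,R^1_{\si\tau}R^2_{\si\tau}$ with $R^\ell_{\si\tau} := \frac1N\,\si_\ell\cdot\tau_\ell$; the diagonal term $2tN\,R^1_{\si\si}R^2_{\si\si} = 2N^{-1}t\,|\si_1|^2|\si_2|^2$ is cancelled exactly by the deterministic term in the exponent, which is why that term appears in \eqref{e.main}. After this cancellation one obtains the identity
\[
\dr_t F_N(t,\mu) = \E\la R^1_{12}\,R^2_{12}\ra + o(1),
\]
where $\la\cdot\ra$ is the Gibbs average of the enriched system, $(12)$ denotes two independent replicas, and the error comes only from the perturbation. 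A parallel integration by parts applied to $H_N^{\mu}$, together with the Ghirlanda--Guerra identities, identifies each $\dr_{\mu_\ell}F_N(t,\mu)$, up to $o(1)$, with a nondecreasing function on $\R_+$ determined by the law of $R^\ell_{12}$ under the enriched Gibbs measure.

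\emph{Approximate supersolution and conclusion.} It then remains to compare $\E\la R^1_{12}R^2_{12}\ra$ with the nonlinearity $\int \dr_{\mu_1}F_N\,\dr_{\mu_2}F_N\,\d\hat\mu$. The coupling $\hat\mu$ of $\mu_1,\mu_2$ is comonotone, hence extremal among couplings with those marginals; combined with the monotonicity of the maps $\dr_{\mu_\ell}F_N$ and the tree structure imposed on the overlaps by the perturbation, a rearrangement inequality yields the one-sided bound
\[
\dr_t F_N(t,\mu) \ge \int \dr_{\mu_1}F_N\,\dr_{\mu_2}F_N\,\d\hat\mu - o(1),
\]
so that $F_N$ is an approximate supersolution of \eqref{e.hj}. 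Since $F_N(0,\cdot)\to\psi$ and $\psi$ is convex, the solution $f$ of \eqref{e.hj} is the one furnished by the Hopf formula, for which a comparison principle on $(\mcl P_2(\R_+))^2$ holds; comparing $F_N$ with $f$ then gives $\liminf_N F_N(t,\mu)\ge f(t,\mu)$ for every $\mu$, and in particular for $\mu = (\de_0,\de_0)$, which is \eqref{e.main}.

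\emph{Main difficulty.} The crux — and the reason only one inequality is obtained — is that the comparison of $\E\la R^1_{12}R^2_{12}\ra$ with the Hamilton--Jacobi nonlinearity is inherently one-sided: the reverse bound would require the pair $(R^1_{12},R^2_{12})$ to be comonotonically coupled, i.e.\ that the two species' overlaps be \emph{synchronized}, and this is precisely what the bipartite, non-positive-definite interaction destroys. A second, more technical, difficulty is to build the well-posedness and comparison theory for \eqref{e.hj} on the non-locally-compact space $(\mcl P_2(\R_+))^2$ with a \emph{nonconvex} nonlinearity, which, together with the careful identification of the derivatives $\dr_{\mu_\ell}F_N$ and the control of the perturbation errors, is where most of the work lies.
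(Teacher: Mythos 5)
Your outline gets the broad shape right (enrich with a cascade-parametrized external field, differentiate in $t$ via Gaussian integration by parts, show the enriched free energy is an approximate supersolution, compare with the solution of \eqref{e.hj}), but three of your key claims are wrong, and each of them misses the actual technical core of the proof.

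First, you assert that $\psi$ is convex, and then that the solution of \eqref{e.hj} is furnished by a Hopf formula together with a comparison principle. The paper shows the opposite: $\psi$ is neither transport-concave nor transport-convex in general (this is demonstrated explicitly in Section~\ref{s.concave} via the sign of $\dr_h^2\psi((\delta_h,\delta_0))$), and the Hopf formula is therefore \emph{not} available — that is precisely the ``nonconvex interactions'' difficulty announced in the title. The paper instead develops a viscosity-solution theory for \eqref{e.hj} from scratch (Section~\ref{s.visc}): the solution is defined as the limit of viscosity solutions to finite-dimensional problems \eqref{e.def.finite.dim} on corner domains $\bar U_K$, with a Neumann boundary condition, and a comparison principle (Proposition~\ref{p.comp}) is proved directly for these. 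Invoking the Hopf formula would simply not be valid here.

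Second, you say the one-sided inequality arises because ``the reverse bound would require the two species' overlaps to be synchronized, and this is precisely what the bipartite interaction destroys.'' This is backwards. Synchronization does hold, and is in fact the central ingredient: after a $\mu$-perturbation the Ghirlanda--Guerra identities force asymptotic ultrametricity of $R_1$, $R_2$, and $R_1+R_2$, which implies (Proposition~\ref{p.mono.coupl} and Theorem~\ref{t.sync}) that $R_1^{1,2}$ and $R_2^{1,2}$ are asymptotically monotonically coupled. This synchronization is what makes the conditional variances in \eqref{e.approxN.hj} small. Moreover, your claimed ``rearrangement inequality'' step does not hold as stated: $\dr_t\bar F_N = \E\la R_1^{1,2}R_2^{1,2}\ra$ whereas $\int\dr_{\mu_1}\bar F_N\,\dr_{\mu_2}\bar F_N\,\d\hat\mu = \E\la\E\la R_1^{1,2}\mid\al\wedge\al'\ra\,\E\la R_2^{1,2}\mid\al\wedge\al'\ra\ra$, so their difference is an expected conditional covariance, which a rearrangement argument does not sign. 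The actual reason the result is one-sided lies elsewhere: the GG identities can only be enforced at viscosity \emph{supersolution} contact points. At a local minimum of $\bar F_N - \widetilde\phi$, Taylor expansion gives a \emph{lower} bound on the Hessian of $\bar F_N$ in the perturbation variables; combined with the concavity of $F_N$ in those variables, this yields the two-sided Hessian control $-C\le\nabla_x^2\bar F_N\le 0$ used in Steps~1--3 of the proof of Theorem~\ref{t.approx.HJ} to derive the variance estimates and hence the GG identities. At a local maximum (subsolution contact point) this argument collapses, which is why the method produces only a lower bound on the free energy.

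In short: your proposal would need to (i) drop the Hopf formula and replace it with the viscosity-solution machinery including the Neumann boundary condition on $\bar U_K$, (ii) invert your reading of the role of synchronization, and (iii) replace the ``rearrangement'' heuristic with the actual argument that supersolution contact points allow one to enforce the Ghirlanda--Guerra identities and hence synchronization.
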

We start by clarifying the meaning of the Hamilton-Jacobi equation in \eqref{e.hj}. Alternative expressions for the integral in \eqref{e.hj} read
\begin{align*}  
\int \dr_{\mu_1} f \, \dr_{\mu_2} f \ \d \hat \mu 
& = \int_{\R_+^2} \dr_{\mu_1} f(t,\mu,x_1) \, \dr_{\mu_2} f(t,\mu,x_2) \ \d \hat \mu(x_1, x_2)  
\\
& = \E \Ll[ \dr_{\mu_1} f(t,\mu,X_{\mu_1}) \, \dr_{\mu_2} f(t,\mu,X_{\mu_2})\Rr].
\end{align*}
The notion of derivative at play here is not of Fr\'echet type (which would express the linear response to the addition of a small signed measure of zero total mass), but rather of transport type. 
Informally, for a ``smooth'' function $g = g(\nu) : \mcl P_2(\R_+) \to \R$, the derivative $\dr_\nu g(\nu,\cdot) \in L^2(\R_+, \nu)$ is characterized by the first-order expansion
\begin{equation*}  
g(\nu') = g(\nu) + \E \Ll[ \dr_\nu g(\nu,X_\nu)(X_{\nu'} - X_\nu) \Rr]  + o \Ll( \E \Ll[ (X_{\nu'} - X_\nu)^2 \Rr] ^\frac 1 2  \Rr) .
\end{equation*}
More concretely, given some integer $k \ge 1$, and setting, for every $x = (x_1, \ldots, x_k) \in \R_+^k$ such that $x_1 \le \cdots \le x_k$,
\begin{equation*}  
g^{(k)}(x_1, \ldots,x_k) := g \Ll( \frac 1 k \sum_{\ell = 1}^k \de_{x_k} \Rr) ,
\end{equation*}
we have, for every $n \in \{1,\ldots, k\}$,
\begin{equation*}  
\dr_\nu g\Ll( \frac 1 k \sum_{\ell = 1}^k \de_{x_k} , x_n \Rr) = k \, \dr_{x_n} g^{(k)}(x).
\end{equation*}
This suggests natural finite-dimensional approximations of the equation \eqref{e.hj}. Denoting
\begin{equation*}  
\bar U_k := \big\{ q = (q_{1,1} , q_{1,2}, \ldots, q_{1,k}, q_{2,1}, \ldots, q_{2,k}) \in \R_+^{2k} \ : \ \forall a \in \{1,2\},  \ q_{a,1} \le \cdots \le q_{a,k} \big\},
\end{equation*}
These approximations take the form
\begin{equation}  
\label{e.finite.dim}
\dr_t f^{(k)} - k \sum_{\ell = 1}^k \dr_{q_{1,\ell}} f^{(k)} \, \dr_{q_{2,\ell}} f^{(k)} = 0 \qquad \text{on } \  \R_+ \times \bar U_k.
\end{equation}
We will define the solution to \eqref{e.hj} as the limit of such finite-dimensional approximations.\footnote{It may seem somewhat contrived to impose the ordering of the variables $q_{a,1} \le \cdots \le q_{a,k}$. However, in the proof of Theorem~\ref{t.main}, this formulation will allow for a clearer treatment of the boundary condition on the ``diagonal part'', i.e.\ whenever $q_{a,\ell} = q_{a,\ell+1}$ for some $\ell \in \{1,\ldots, k-1\}$. (This point was overlooked in a preliminary version of the paper.) Moreover, in more general models, the relevant variables are matrix-valued, and there is no simple ``symmetrization'' of an ordered tuple of symmetric matrices, so there is no way around working with a set of the form of $\bar U_k$ in this more general setting.}

That there exists a connection between the free energy of spin glass models and certain infinite-dimensional Hamilton-Jacobi equations was first observed in the context of mixed $p$-spin models \cite{parisi}. In these models, the energy function $H_N$ is a centered Gaussian field such that the covariance between $H_N(\si)$ and $H_N(\tau)$ is proportional to $\xi(\si \cdot \tau/N)$, where the function $\xi$ is fixed and can be written in the form $\xi(r) = \sum_{p \ge 2} \beta_p r^p$, for some family of coefficients $\beta_p \ge 0$ that decays sufficiently fast. (The constraint $\beta_p \ge 0$ is necessary and sufficient in order for $\xi$ to define a covariance kernel for every $N$~\cite{schoenberg}.) For these models, the corresponding Hamilton-Jacobi equation takes the form
\begin{equation}  
\label{e.hj.xi}
\dr_t f - \int \xi(\dr_\mu f) \, \d \mu = 0 \qquad \text{on } \R_+ \times \mcl P_2(\R_+). 
\end{equation}
With this in mind, it is natural to distinguish between three increasingly large classes of models. The first is the class of models for which the mapping $\xi$ is convex over $\R$; roughly speaking, these are the models whose limit free energy can be identified using the methods of \cite{gue03, Tpaper,Tbook1,Tbook2} (in fact, the precise condition is slightly more restrictive, see \cite[(14.101)]{Tbook2}). An extension of this approach, developed in \cite{pan.aom, pan}, allows to cover all mixed $p$-spin models. The convexity property, once properly understood, is still fundamental in this setting. More precisely, one can check that the relevant solution to \eqref{e.hj.xi} satisfies $\dr_\mu f \ge 0$. On the other hand, in view of the form of $\xi$, this function is convex over $\R_+$. In other words, we can redefine the function~$\xi$ to be $+\infty$ over $(-\infty,0)$; with this new definition, the relevant Hamilton-Jacobi equation is still \eqref{e.hj.xi}, and now the convexity of the nonlinearity has been restored. This convexity is crucial to the validity of a Hopf-Lax formula for the solution, and this variational formula forms the basis of the arguments for identifying the limit free energy in these approaches. 

The third class of models corresponds to situations in which the nonlinearity in the Hamilton-Jacobi equation may be genuinely nonconvex; a representative example in this class is the focus of the present paper. In this case, it is unclear whether the limit free energy can be described as a (reasonable) variational problem. The classical Hopf-Lax variational formula requires that the nonlinearity in the equation be convex (or concave), which it is clearly not in our setting. 
Alternatively, irrespectively of the structure of the nonlinearity, the solution to a Hamilton-Jacobi equation can always be written as a saddle-point problem, provided that the initial condition is concave (or convex) \cite{hopf,bareva,lioroc}. This motivates to study the concavity of the initial condition in~\eqref{e.hj}, that is, the function~$\psi$ in~\eqref{e.hj}. In the context of mixed $p$-spin models, the main result of \cite{aufche} implies the concavity of this function. I do not know whether this argument can be generalized to cover the bipartite model investigated here. But in any case, this does not seem to be the appropriate notion of concavity to guarantee the validity of a saddle-point formulation for the solution to~\eqref{e.hj}. In order for this to work, we would need instead that the function $\psi$ be \emph{transport-concave} (one may also say ``displacement-concave''); but we will see that this is not so in general. At present, my impression is that it is \emph{not} possible to express the limit free energy as a saddle-point problem in general, and that it would be very difficult to circumvent a description of this limit involving Hamilton-Jacobi equations. 

We now discuss the intuition behind Theorem~\ref{t.main}. The simplest setting in which to explain the idea is that of the Curie-Weiss model, see for instance \cite{HJinfer}. The main point is to enrich the model to include ``non-interacting'' terms in the energy function, with the hope that, if these simpler terms are sufficiently ``expressive'', then certain asymptotic relations between the derivatives of the free energy will have to be satisfied. In our context, a first attempt is to try to compare $\sum J_{ij} \si_{1,i} \si_{2,j}$ with a linear combination of $z_1 \cdot \si_1$ and $z_2 \cdot \si_2$, where $z = (z_1,z_2) = (z_{1,1}, \ldots, z_{1,N}, z_{2,1}, \ldots, z_{2,N})$ is a vector of independent standard Gaussians. In other words, we consider, for every $t,p_1,p_2 \ge 0$,  the free energy
\begin{multline}  
\label{e.def.GN}
G_N(t,p_1,p_2)  :=
 \\
 -\frac 1 N \E \log \int \exp \Ll( \sqrt{2t} H_N(\sigma) - N^{-1}  t |\si_1|^2 \, |\si_2|^2   + \sum_{a = 1}^2 \Ll(\sqrt{2p_a} \, z_a \cdot \si_a - p_a |\si_a|^2\Rr) \Rr)\, \d P_N(\si) .
\end{multline}
(Parametrizations of the form $\sqrt{t} X$ where $X$ is a Gaussian random variable are of course natural: think of Brownian motion. Each random variable in the exponential comes with a compensating term, so that the expectation of the exponential is equal to 1.)
Denoting by $\la \cdot \ra$ the expectation with respect to the Gibbs measure proportional to $\exp (\cdots) \, \d P_N(\si)$, one can check that
\begin{equation*}  
\dr_t G_N = N^{-2} \, \E \la (\si_{1} \cdot \si_1')(\si_2 \cdot \si_2') \ra,
\end{equation*}
where $\si'$ denotes an independent copy of $\si$ under $\la \cdot \ra$. On the other hand, 
\begin{equation*}  
\dr_{p_a} G_N = N^{-1} \E \la \si_a \cdot \si_a' \ra \qquad (a \in \{1,2\}),
\end{equation*}
so that 
\begin{equation}  
\label{e.naive.pde}
\dr_t G_N - \dr_{p_1} G_N \, \dr_{p_2} G_N = N^{-2} \, \E \la \Ll( \si_1 \cdot \si_1' - \E \la \si_1 \cdot \si_1' \ra \Rr) \Ll( \si_2 \cdot \si_2' - \E \la \si_2 \cdot \si_2' \ra \Rr) \ra.
\end{equation}
Hence, if the overlaps $\si_a \cdot \si_a'$ were concentrated, we would then infer that $G_N$ converges to $g = g(t,p_1,p_2) : \R_+^3 \to \R$ solution to
\begin{equation}  
\label{e.naive}
\dr_t g - \dr_{p_1} g \, \dr_{p_2} g = 0.
\end{equation}
However, as is well-known, the concentration of the overlaps is only valid in a high-temperature (that is, small $t$) region; a more refined enriched system is necessary to ``close the equation'' in general. The formal manipulation allowing to obtain the true equation from the ``naive'' (or replica-symmetric) one given in \eqref{e.naive} consists simply in replacing the variables $(p_1,p_2)$ encoding the strength of the extraneous random magnetic field by probability measures on $\R_+$, thus leading to the equation in \eqref{e.hj}. Intuitively, the reason why this makes sense is as follows. In the term $\sqrt{2p_a} z_a \cdot \si_a$, the magnetic field acting on~$\si_a$ has a ``trivial'' structure. However, we need to have access to a richer term that allows to represent extraneous magnetic fields with an ultrametric structure, and this structure is described by its overlap distribution, a probability measure on $\R_+$. This construction, explained precisely below, defines an enriched free energy $\bar F_N = \bar F_N(t,\mu_1,\mu_2) : \R_+ \times (\mcl P(\R_+))^2 \to \R$, and we will show that this enriched free energy is asymptotically bounded from below by the solution to \eqref{e.hj}; see Theorem~\ref{t.main.full} for a precise statement. As will be seen in the next section, the corresponding enriched Gibbs measure features extraneous variables, denoted $\alpha$, which are in correspondence with the overlap structure of the random magnetic fields. A crucial step of the argument consists in showing that ``typically'', the overlaps $\si_a \cdot \si_a'$ can be inferred from the knowledge of the overlap between $\alpha$ and $\alpha'$.

We now discuss related works. Fundamental insights on spin glasses, most notably the ultrametricity property, were first identified in the physics literature \cite{parisi79,parisi80,MPV}, where variational formulas for limit free energies were predicted.
These predictions were then proved rigorously in \cite{gue03, Tpaper,Tbook1,Tbook2} in the setting of mixed $p$-spin models discussed above, under the assumption that the function $\xi$ is convex over $\R$. The extension to the case of general $\xi$ was achieved in \cite{pan.aom,pan}, and relies in particular on the justification that ``typical'' Gibbs measures are indeed organized along an asymptotically ultrametric structure. Further studies of particular relevance to the current paper concern the synchronization property, for models with multiple types of spins, or vector-valued spins \cite{pan.multi,pan.potts,pan.vec}. Earlier works on spin-glass models with spins of multiple types include \cite{tal.genform,bovklim,barramulti,barcon, aufchebi, bailee}. 
 
Heuristic connections between limit free energies and partial differential equations were first pointed out in \cite{gue01, barra1, abarra, barra2}, under a replica-symmetric or one-step replica symmetry breaking assumption. A rigorous identification of limit free energies of disordered systems in terms of Hamilton-Jacobi equations was obtained in \cite{HJinfer, HJrank,HB1,HBJ}, in the context of problems of statistical inference. In this latter context, particular properties of the models allow to ``close the equation'' using only a finite number of additional variables; in other words, the Hamilton-Jacobi equations appearing there are finite-dimensional. The relevant partial differential equation for mixed $p$-spin models, namely \eqref{e.hj.xi}, was then identified in \cite{parisi}; an extension of this convergence, valid for the relevant enriched free energy, was conjectured there, and then proved in \cite{HJsoft}. This last reference also describes how to ``remove'' compensating terms such as the term $N^{-1} t |\si_1|^2 |\si_2|^2$ appearing in \eqref{e.main}, so that we can indeed end up with an upper bound on the limit of \eqref{e.really.vanilla.FN}.

The rest of the paper is organized as follows. In Section~\ref{s.defs}, we define the enriched free energy, record some of its basic properties, and state a generalized version of Theorem~\ref{t.main}, see Theorem~\ref{t.main.full}. In Section~\ref{s.visc}, we define the precise notion of viscosity solution for \eqref{e.finite.dim}, and define the solution to \eqref{e.hj} as the limit of such finite-dimensional solutions. In Section~\ref{s.serious}, we show that if we restrict the free energy to measures that are sums of $k$ Dirac masses with equal weights, then the function we obtain is a supersolution to \eqref{e.finite.dim}, up to an error that goes to $0$ as $k$ goes to infinity; this allows us to conclude the proof of Theorem~\ref{t.main.full} (and thus also of Theorem~\ref{t.main}). A crucial ingredient used in Section~\ref{s.serious} is the fact that overlaps synchronize, and the justification of this is deferred to Section~\ref{s.synch}. In this section, we revisit the synchronization results of~\cite{pan.multi}, emphasizing the notion of monotone couplings, and giving a ``finitary'' version of the statement of asymptotic synchronization. Finally, in Section~\ref{s.concave}, we discuss possible attempts at writing the solution to \eqref{e.hj} as a saddle-point problem, and show that these tentative formulas are invalid. The appendix collects a handful of basic results on Gaussian integrals.

%
%
%
%
%
%

\section{Definitions and basic properties}
\label{s.defs}


We write $\N = \{0,1,\ldots\}$ to denote the set of natural numbers, $\N_* := \N \setminus \{0\}$, and $\R_+ := [0,\infty)$.
For every $x,y \in \R^N$, we write
\begin{equation*}  
x \cdot y := \sum_{i = 1}^N x_i y_i, \qquad |x|^2 = x \cdot x.
\end{equation*}
We always implicitly understand that a vector $\sigma \in \R^{2N}$ is indexed according to $\sigma = (\sigma_{1,1}, \ldots, \sigma_{1,N}, \sigma_{2,1}, \ldots, \sigma_{2,N})$. We recall that $H_N(\si)$ was defined in \eqref{e.def.HN}, and notice that, for every $\sigma, \si' \in \R^{2N}$,
\begin{align}  
\notag
\E \Ll[ H_N(\sigma) H_N(\si') \Rr] 
& = N^{-1} \sum_{i,j = 1}^N \si_{1,i} \si'_{1,i} \si_{2,j} \si'_{2,j}
\\
\label{e.correl.HN}
& = N^{-1} (\si_{1} \cdot \si'_1) (\si_2 \cdot \si'_2).
\end{align}
For every $t \ge 0$, we define
\begin{equation}  
\label{e.def.HNt}
H_N^t(\si) := \sqrt{2t} \,  H_N(\si) - N^{-1} t \, |\si_1|^2 |\si_2|^2.
\end{equation}
We are now going to introduce another energy function, parametrized by $\mu = (\mu_1, \mu_2) \in (\mcl P(\R_+))^2$. It is much more convenient to describe and to work with this object in the case when the measures are discrete, and then simply argue by continuity. We therefore give ourselves an integer $k \ge 0$, and parameters
\begin{equation}  
\label{e.def.zeta}
0 =  \zeta_{0}  < \zeta_1 \le \zeta_2 \le \cdots \le \zeta_{k-1} \le \zeta_{k} < \zeta_{k+1} = 1,
\end{equation}
\begin{equation}
\label{e.def.q}
0 = q_{a,-1} \le q_{a,0} \le q_{a,1} \le \cdots \le q_{a,k} < q_{a,k+1} =  \infty \qquad (a \in \{1,2\}),
\end{equation}
and we set, for every $a \in \{1,2\}$,
\begin{equation}  
\label{e.mu.diracs}
\mu_a = \sum_{\ell = 0}^{k} (\zeta_{\ell+1} - \zeta_{\ell}) \de_{q_{a,\ell}}.
\end{equation}
%
%
These measures will serve to parametrize certain ultrametric structures with a prescribed overlap distribution.
We instantiate the rooted tree with (countably) infinite degree and depth $k$ as
\begin{equation}  
\label{e.def.mclA}
\mcl A := \N^{0} \cup \N \cup \N^2 \cup \cdots \cup \N^k,
\end{equation}
where $\N^{0} = \{\emptyset\}$, and $\emptyset$ represents the root of the tree. For every $\alpha \in \N^\ell$, we write $|\alpha| := \ell$ to denote the depth of the vertex $\alpha$ in the tree $\mcl A$. For every leaf $\alpha = (n_1,\ldots,n_k)\in \N^k$ and $\ell \in \{0,\ldots, k\}$, we write
\begin{equation*}  
\alpha_{| \ell} := (n_1,\ldots, n_\ell),
\end{equation*}
with the understanding that $\alpha_{| 0} = \emptyset$.
We also give ourselves a family $(z_{\al,a,i})_{\alpha \in \mcl A, a \in \{1,2\}, 1 \le i \le N}$ of independent standard Gaussians, independent of $H_N$, and we let $(v_\alpha)_{\al \in \N^k}$ be a Poisson-Dirichlet cascade with weights given by the family $(\zeta_\ell)_{1 \le \ell \le k}$. 
We refer to \cite[(2.46)]{pan} for a precise definition, and only mention here a few important points. First, in the case $k = 0$, we simply set $v_{\emptyset} = 1$. Second, in the case $k = 1$, the weights $(v_\alpha)_{\al \in \N}$ are obtained by normalizing a Poisson point process on $(0,\infty)$ with intensity measure~$\zeta_1 x^{-1-\zeta_1} \, \d x$ so that $\sum_{\alpha} v_\alpha = 1$. Third, for general~$k \ge 1$, the progeny of each non-leaf vertex at level $\ell \in \{0,\ldots, k-1\}$ is decorated with the values of an independent Poisson point process of intensity measure $\zeta_{\ell+1} x^{-1-\zeta_{\ell+1}} \, \d x$, then the weight of a given leaf $\alpha \in \N^k$ is calculated by taking the product of the ``decorations'' attached to each parent vertex, including the leaf vertex itself (but excluding the root, which has no assigned ``decoration''), and finally, these weights over leaves are normalized so that their total sum is~$1$. We take this Poisson-Dirichlet cascade $(v_\alpha)_{\alpha \in \N^k}$ to be independent of $H_N$ and of the random variables $(z_{\alpha,a,i})_{\al \in \mcl A, a \in \{1,2\}, 1 \le i \le N}$. For every $\sigma \in \R^{2N}$ and $\al \in \N^k$, we set
\begin{equation}
\label{e.def.HNalpha}
H_N^\mu(\sigma,\alpha) := \sum_{a= 1}^2 \Ll(\sum_{\ell = 0}^k \Ll(2q_{a,\ell} - 2q_{a,\ell-1}\Rr)^\frac 1 2 z_{\alpha_{|\ell},a} \cdot \sigma_a - q_{a,k}|\si_a|^2 \Rr),
\end{equation}
where we write $z_{\al_{|\ell},a} \cdot \sigma_a = \sum_{i = 1}^N z_{\al_{|\ell},a,i} \, \sigma_{a,i}$. The random variables $(H_N^\mu(\sigma,\alpha))_{\sigma \in {\R^{2N}}, \alpha \in \N^k}$ form a Gaussian family which is independent of $(H_N(\sigma))_{\sigma \in \R^{2N}}$. We understand that the symbol $\E$ stands for the expectation with respect to $(J_{ij})$, $(z_\al)_{\al \in \mcl A}$ and $(v_\al)_{\al\in \N^k}$. 
Notice that, for each fixed choice of $\al, \al' \in \N^k$, we have
\begin{equation}
\label{e.overlap.ext.field}
\frac 1 N\E \Ll[\Ll(\sum_{\ell = 0}^k \Ll(2q_{a,\ell} - 2q_{a,\ell-1}\Rr)^\frac 1 2 z_{\alpha_{|\ell},a} \Rr)\cdot \Ll(\sum_{\ell = 0}^k \Ll(2q_{a,\ell} - 2q_{a,\ell-1}\Rr)^\frac 1 2 z_{\alpha'_{|\ell},a} \Rr) \Rr]= 2 q_{a,\al \wedge \al'},
\end{equation}
where we write
\begin{equation}  
\label{e.def.wedge}
\alpha \wedge \al' := \sup \{\ell \le k \ : \ \al_{|\ell} = \al'_{|\ell} \}.
\end{equation}
The point of the construction in \eqref{e.def.HNalpha} is to provide with a more refined ``external field'' than that introduced in \eqref{e.def.GN}. Indeed, if we sample two independent copies $\al, \al' \in \N^k$ according to the weights $(v_\al)_{\al \in \N^k}$, then the law of overlap 
\begin{equation*}  
\frac 1 N\Ll(\sum_{\ell = 0}^k \Ll(2q_{a,\ell} - 2q_{a,\ell-1}\Rr)^\frac 1 2 z_{\alpha_{|\ell},a} \Rr)\cdot \Ll(\sum_{\ell = 0}^k \Ll(2q_{a,\ell} - 2q_{a,\ell-1}\Rr)^\frac 1 2 z_{\alpha'_{|\ell},a} \Rr)
\end{equation*}
under the measure in which we average over $(z_\al)$ and $(v_\al)$ is $\mu_a$ (this can be inferred from Lemma~\ref{l.overlap.pdc} below or, more directly, from \cite[(2.34)]{pan}).
We define
\begin{equation}  
\label{e.def.FN}
F_N(t,\mu)  := -\frac 1 N \log \int \sum_{\al \in \N^k} \exp \Ll( H_N^t(\si) + H_N^\mu(\si,\al) \Rr) \, v_\al \d P_N(\si).
\end{equation}
We also define the Gibbs measure $\la \cdot \ra$, with canonical random variable $(\si, \al)$ taking values in $\R^{2N} \times \N^k$, in such a way that, for any bounded measurable function $f$,
\begin{equation}  
\label{e.def.gibbs}
\la f(\si,\alpha) \ra := \exp \Ll( N F_N(t,\mu) \Rr) \int \sum_{\al \in \N^k} f(\si,\al)  \exp \Ll( H_N^t(\si) + H_N^\mu(\si,\al) \Rr) \, v_\al \d P_N(\si),
\end{equation}
We also allow ourselves to consider multiple independent copies, or ``replicas'', of the random variable $(\si,\alpha)$, which we may denote by $(\si',\al')$, $(\si'',\al'')$, and so on. Alternatively, in situations where many independent replicas need to be considered, we also denote these replicas by $(\si^{\ell},\al^{\ell})_{\ell \ge 1}$. Recall that the measure $\la \cdot \ra$ is itself random; while the replicas are independent under $\la \cdot \ra$, conditionally on the randomness ``extraneous'' to the measure, they are no longer independent after we average further.

We denote by $\td F_N$ the average of $F_N$ with respect to the random variables $(z_\alpha)$ and $(v_\alpha)$. Since the only additional source of randomness in the problem comes from the $J$'s in the definition of $H_N$, and since these are independent random variables, we can write
\begin{equation}  
\label{e.def.tdFN}
\td F_N(t,\mu) = \E \Ll[ F_N(t,\mu) \vb (H_N(\si))_{\si \in \R^{2N}} \Rr] .
\end{equation}
We also define the fully averaged free energy
\begin{equation}  
\label{e.def.barFN}
\bar F_N(t,\mu) := \E \Ll[ F_N(t,\mu) \Rr] .
\end{equation}
The notation just introduced suggests that these quantities depend on the parameters~$\zeta$ and $q$ in \eqref{e.def.zeta} and \eqref{e.def.q} only insofar as they affect the measures $\mu_1$ and $\mu_2$. The next proposition states that this is indeed the case, at least as far as the quantities $\td F_N(t,\mu)$ and $\bar F_N(t,\mu)$ are concerned. (It would make more sense to speak of distributional identities for $H_N^\mu$ and $F_N(t,\mu)$; since such considerations will not play any role in this paper, we simply accept a slightly abusive notation for these latter two quantities.) It also states that $\td F_N(t,\mu)$, and therefore also $\bar F_N(t,\mu)$, satisfy a Lipschitz estimate in their dependence in $\mu$. Recall that the random variables of $X_\nu$ appearing in the statement were defined in~\eqref{e.def.Xnu}.

\begin{proposition}[Lipschitz continuity of $\td F_N$]
\label{p.continuity}
The functions $\td F_N(t,\mu)$ and $\bar F_N(t,\mu)$ depend in the parameters $\zeta$ and $q$ in \eqref{e.def.zeta} and \eqref{e.def.q} only through their effect on the measures $(\mu_1,\mu_2)$ in \eqref{e.mu.diracs}. Moreover, for every $t \ge 0$ and any two pairs $\mu, \mu' \in (\mcl P(\R_+))^2$ of measures of finite support, we have
\begin{equation}  
\label{e.continuity}
|\td F_N(t,\mu) - \td F_N(t,\mu')| \le \sum_{a = 1}^2 \E \Ll[|X_{\mu_a} - X_{\mu'_a}| \Rr],
\end{equation}
and the same inequality also holds with $\td F_N$ replaced by $\bar F_N$. In particular, $\td F_N$ and $\bar F_N$ can be extended by continuity to $\R_+ \times (\mcl P_1(\R_+))^2$. 
\end{proposition}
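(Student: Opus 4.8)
I would prove the invariance statement and the Lipschitz estimate separately, the second by Gaussian interpolation. \emph{Invariance.} If the parameters $(\zeta,q)$ in \eqref{e.def.zeta}--\eqref{e.def.q} satisfy $q_{1,\ell}=q_{1,\ell+1}$ and $q_{2,\ell}=q_{2,\ell+1}$ for some $\ell$, the coefficient $(2q_{a,\ell+1}-2q_{a,\ell})^{1/2}$ of the level-$(\ell+1)$ term in \eqref{e.def.HNalpha} vanishes for both $a$, and summing the cascade weights over that level — which is legitimate by the recursive normalization of a Poisson--Dirichlet cascade — removes it without altering $F_N$, and hence $\td F_N$ and $\bar F_N$ (a routine manipulation of Ruelle cascades; cf.\ \cite{pan}). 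Given two parametrizations inducing $(\mu_1,\mu_2)$ and $(\mu'_1,\mu'_2)$, I would therefore refine both to parametrizations with a common sequence $\zeta$ — the one whose breakpoints are the union of those of the four quantile functions $F^{-1}_{\mu_1},F^{-1}_{\mu_2},F^{-1}_{\mu'_1},F^{-1}_{\mu'_2}$ — so that the atoms $q_{a,\cdot}$ and $q'_{a,\cdot}$ are non-decreasing in $\ell$. Taking $(\mu_1,\mu_2)=(\mu'_1,\mu'_2)$ then yields the first assertion, and it remains to prove \eqref{e.continuity} for two parametrizations sharing the same $\zeta$.

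\emph{Interpolation.} Fix such a $\zeta$ and non-decreasing atoms $q,q'$, with measures $\mu,\mu'$. Let $h_a^\mu(\si_a,\al)$ be the centered Gaussian part of \eqref{e.def.HNalpha} (that is, $H_N^\mu$ minus the term $-q_{a,k}|\si_a|^2$), built from the Gaussians $(z_{\beta,a,i})$, and let $h_a^{\mu'}$ be built in the same way from an independent copy $(z'_{\beta,a,i})$. For $u\in[0,1]$ set
\[
\Phi^{(u)}(\si,\al):=H_N^t(\si)+\sum_{a=1}^2\Ll(\sqrt{1-u}\,h_a^\mu(\si_a,\al)+\sqrt{u}\,h_a^{\mu'}(\si_a,\al)-\bigl((1-u)q_{a,k}+u\,q'_{a,k}\bigr)|\si_a|^2\Rr),
\]
and let $\td F_N^{(u)}$ be the average over $(z),(z'),(v)$ of $-\tfrac1N\log\int\sum_\al e^{\Phi^{(u)}(\si,\al)}\,v_\al\,\d P_N(\si)$. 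The bracketed field is again of the form in \eqref{e.def.HNalpha} — a sum over the $k+1$ levels of Gaussian vectors independent across the children of each vertex, with the compensating term chosen so that its exponential has mean $1$ — and its overlap is $2\bigl((1-u)q_{a,\al\wedge\al'}+u\,q'_{a,\al\wedge\al'}\bigr)(\si_a\cdot\si'_a)$ by \eqref{e.overlap.ext.field}. Thus $\td F_N^{(0)}=\td F_N(t,\mu)$ and $\td F_N^{(1)}=\td F_N(t,\mu')$, the map $u\mapsto\td F_N^{(u)}$ is continuous on $[0,1]$ and smooth on $(0,1)$, and the underlying cascade is throughout the Poisson--Dirichlet cascade with weights $\zeta$.

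\emph{Differentiation and bound.} Writing $\la\cdot\ra_u$ for the Gibbs average attached to $\Phi^{(u)}$ and $(\si^1,\al^1),(\si^2,\al^2)$ for two replicas, one has $-N\,\partial_u\td F_N^{(u)}=\E\la\partial_u\Phi^{(u)}(\si,\al)\ra_u$ ($\E$ denoting the average over $(z),(z'),(v)$, resp.\ over all the randomness in the case of $\bar F_N$). Gaussian integration by parts in $(z),(z')$, using that the covariance of $\partial_u\bigl(\sqrt{1-u}\,h_a^\mu+\sqrt{u}\,h_a^{\mu'}\bigr)$ with the Gaussian part of $\Phi^{(u)}$ equals $(q'_{a,\al\wedge\al'}-q_{a,\al\wedge\al'})(\si_a\cdot\si'_a)$, produces an on-diagonal contribution that is exactly cancelled by the derivative of the compensating term, leaving
\[
N\,\partial_u\td F_N^{(u)}=\sum_{a=1}^2\E\la\bigl(q'_{a,\al^1\wedge\al^2}-q_{a,\al^1\wedge\al^2}\bigr)(\si^1_a\cdot\si^2_a)\ra_u .
\]
Now $|\si^1_a\cdot\si^2_a|\le N$ as the spins lie in $[-1,1]$, and the decisive point is that the law of the cascade overlap under $\E\la\cdot\ra_u$ does not feel the spin interaction: $\E\la\1\{\al^1\wedge\al^2\ge m\}\ra_u=1-\zeta_m$ for every $m\in\{0,\dots,k\}$. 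This is because the field \eqref{e.def.HNalpha} is built level-by-level out of families that are i.i.d.\ across siblings, so the reweighting by $e^{H_N^t(\si)}$ and by the spins leaves the overlap distribution of the cascade unchanged (the invariance of Ruelle probability cascades; equivalently, this follows from Lemma~\ref{l.overlap.pdc} combined with that invariance — see \cite{pan,pan.multi}). Using $|q'_{a,j}-q_{a,j}|=\int_0^\infty\bigl|\1\{s<q'_{a,j}\}-\1\{s<q_{a,j}\}\bigr|\,\d s$ and the elementary identity $\mu_a((s,\infty))=1-\zeta_{\tau_a(s)}$, where $\tau_a(s):=\min\{j:q_{a,j}>s\}$ (valid because $q_{a,\cdot}$ is non-decreasing), I would then compute, uniformly in $u$,
\[
\E\la\bigl|q'_{a,\al^1\wedge\al^2}-q_{a,\al^1\wedge\al^2}\bigr|\ra_u=\int_0^\infty\bigl|\mu'_a([0,s])-\mu_a([0,s])\bigr|\,\d s=\E\bigl[|X_{\mu_a}-X_{\mu'_a}|\bigr].
\]
Integrating $\partial_u\td F_N^{(u)}$ over $[0,1]$ gives \eqref{e.continuity} for $\td F_N$, whence for $\bar F_N$ as well (by applying $\E$, the right-hand side being deterministic); and since finitely supported discrete measures are dense in $(\mcl P_1(\R_+))^2$ for the metric $(\mu,\mu')\mapsto\sum_a\E[|X_{\mu_a}-X_{\mu'_a}|]$, \eqref{e.continuity} extends $\td F_N$ and $\bar F_N$ uniquely and continuously to that space.

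The step I expect to be the main obstacle is the cascade-overlap identity $\E\la\1\{\al^1\wedge\al^2\ge m\}\ra_u=1-\zeta_m$: here one must genuinely exploit that we have averaged over the cascade weights $(v_\al)$ and the Gaussians $(z_\al)$ and that the field in \eqref{e.def.HNalpha} has the sibling-independence structure, since a priori the reweighting coming from $H_N^t$ and the spins could favor particular leaves of the tree, and it is the Ruelle-cascade invariance that excludes this; the cruder bound $|q'_{a,\al\wedge\al'}-q_{a,\al\wedge\al'}|\le\max_\ell|q'_{a,\ell}-q_{a,\ell}|$ would only produce a genuinely weaker Lipschitz constant. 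A secondary point needing care is the reduction to a common $\zeta$, i.e.\ the invariance under inserting a level with vanishing coefficient in \eqref{e.def.HNalpha}: this must be justified through the normalization of Poisson--Dirichlet cascades, with attention to degenerate cases such as $\zeta_\ell=\zeta_{\ell+1}$ or $q_{a,\ell}=q_{a,\ell+1}$ holding for only one value of $a$.
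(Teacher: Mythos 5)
Your proof is correct, and it is a genuinely different route from the one the paper points to. The paper says Proposition~\ref{p.continuity} follows from the combination of Proposition~\ref{p.ipdc1} (the recursive integration formula for Poisson--Dirichlet cascades) and Lemma~\ref{l.overlap.pdc}, following the proof in the cited reference: roughly, one uses the recursion to show that $\td F_N$ depends only on $\mu$ and to compute $\dr_{q_{a,\ell}}\td F_N$ directly, then bounds the derivative by $\zeta_{\ell+1}-\zeta_\ell$ using Lemma~\ref{l.overlap.pdc} and $|\si_a\cdot\si_a'|\le N$, and integrates along the straight line from $q$ to $q'$. You instead run a Guerra-type $\sqrt u$-interpolation between independent copies of the cascade field. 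The two end up computing the same quantity (compare your final display for $N\,\partial_u\td F_N^{(u)}$ with \eqref{e.drq.barFN}), but your route avoids explicit use of Proposition~\ref{p.ipdc1} and has the side benefit that the interpolated field is manifestly smooth in $u\in(0,1)$ even when some consecutive $q_{a,\ell}$ coincide, so no differentiability issues near the degenerate strata; the paper's route is slightly more economical since it differentiates in $q$ once rather than introducing an auxiliary parameter.

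Two points that deserve a word of caution. First, your ``decisive point'' $\E\la\1\{\al^1\wedge\al^2\ge m\}\ra_u=1-\zeta_m$ is not literally Lemma~\ref{l.overlap.pdc}, which is stated for the original Gibbs measure; you must note that the interpolated field $\sqrt{1-u}\,h_a^\mu+\sqrt u\,h_a^{\mu'}$ has the same level-by-level independence across siblings, so the proof of Lemma~\ref{l.overlap.pdc} (via the PDC invariance under reweighting) goes through verbatim with $\la\cdot\ra$ replaced by $\la\cdot\ra_u$. This you essentially say, but it should be stated explicitly since the lemma as quoted does not cover $\la\cdot\ra_u$. Second, the reduction to a common $\zeta$ requires adding (not only removing) levels with $q_{1,\ell}=q_{1,\ell+1}$ and $q_{2,\ell}=q_{2,\ell+1}$; as you observe, this is the inverse of the collapsing operation, but the verification that inserting such a level leaves $\td F_N$ unchanged is cleanest via the recursion of Proposition~\ref{p.ipdc1}, where $X_{\ell+1}$ does not depend on $y_{\ell+1}$ and hence $X_\ell = X_{\ell+1}$. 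You flag this subtlety correctly at the end; I'd just note that the case $q_{a,\ell}=q_{a,\ell+1}$ for a single $a$ is \emph{not} a degenerate parametrization (it encodes two genuinely different atoms for the other species), so only the doubly-degenerate case needs to be handled in the refinement step.
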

One possible way to prove Proposition~\ref{p.continuity} is to rely on the following two results. The first one describes a relatively concrete procedure for computing averages over Poisson-Dirichlet cascades; see \cite[Theorem~2.9]{pan} for a proof.
\begin{proposition}[Integration of Poisson-Dirichlet cascades]
\label{p.ipdc1}
Let $(\omega_\alpha)_{\alpha \in \mcl A}$ be independent and identically distributed random variables taking values in some measurable space~$E$, independent of the Poisson-Dirichlet cascade $(v_\alpha)_{\al \in \N^k}$. Let $X_k : E^r \to \R$ be a measurable function, and denote
\begin{equation*}  
X_{-1} := \E \log \sum_{\alpha \in \N^k} \exp \Ll( X_k(\omega_{\alpha_{|0}}, \ldots, \omega_{\alpha_{|k}}) \Rr) \, v_\alpha.
\end{equation*}
In the expression above, the expectation $\E$ is with respect to the law of $(\omega_\alpha)_{\al \in \A}$ and $(v_\alpha)_{\al \in \N^k}$.
Define recursively, for every $\ell \in \{1,\ldots,k\}$, the measurable function $X_{\ell-1} : E^{\ell-1} \to \R \cup \{+\infty\}$ given by
\begin{equation*}  
X_{\ell-1}(\omega_0,\ldots,\omega_{\ell-1}) := \zeta_\ell^{-1} \log \E_{\omega_\ell} \exp \Ll( \zeta_\ell X_\ell(\omega_0,\ldots,\omega_\ell) \Rr) ,
\end{equation*}
where, for every $\ell \in \{0,\ldots, k\}$, we write $\E_{\omega_\ell}$ to denote the integration of the variable~$\omega_\ell$ along the law of any of the variables $(\omega_\alpha)_{\alpha \in \A}$.
We have
\begin{equation*}  
X_{-1} = \E_{\omega_{0}}\Ll[ X_0(\omega_0) \Rr].
\end{equation*}
\end{proposition}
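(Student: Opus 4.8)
The plan is to prove the identity by induction on $k$, the engine being the scaling invariance of the Poisson process with intensity $\zeta x^{-1-\zeta}\,\d x$ (with $0<\zeta<1$). The single-level fact I would use is: if $(u_n)$ enumerates such a Poisson process and $(g_n)$ are nonnegative i.i.d.\ marks independent of $(u_n)$ with $\bar g:=\E[g_1^{\zeta}]<\infty$, then, as point processes, $\{u_ng_n\}_n\overset{d}{=}\{\bar g^{1/\zeta}u_n'\}_n$ for a fresh copy $(u_n')$ of the same process --- a one-line computation of the push-forward of $\zeta x^{-1-\zeta}\,\d x\otimes(\text{law of }g_1)$ under $(x,y)\mapsto xy$. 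In particular $\E\log\sum_n u_ng_n=\zeta^{-1}\log\E[g_1^{\zeta}]+\E\log\sum_nu_n$, where $\E\log\sum_nu_n$ is a finite universal constant (finiteness being clear from the Fr\'echet-type tails of $\sum_nu_n$). This holds equally well conditionally, and jointly over independent blocks. One may and does assume $0<\zeta_1<\cdots<\zeta_k<1$, as is standard for Poisson--Dirichlet cascades.

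The base case $k=0$ is immediate ($v_\emptyset=1$, so $X_{-1}=\E X_0(\omega_\emptyset)$). For the inductive step I would peel off the bottom level: view the depth-$k$ cascade as a depth-$(k-1)$ cascade with weights $\zeta_1,\dots,\zeta_{k-1}$ and unnormalized leaf weights $w'_\beta$ ($\beta\in\N^{k-1}$), to which one attaches, below each $\beta$, an independent Poisson process $(u^\beta_n)_n$ of intensity $\zeta_kx^{-1-\zeta_k}\,\d x$ carrying the marks $(\omega_{\beta n})_n$, so that $v_{\beta n}=w'_\beta u^\beta_n/W$ with $W=\sum_{\beta,n}w'_\beta u^\beta_n$. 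Writing $\phi_\beta(\omega):=\exp X_k(\omega_\emptyset,\dots,\omega_\beta,\omega)$ and splitting the logarithm of the ratio, $X_{-1}=\E\log A-\E\log B$ with $A:=\sum_\beta w'_\beta\sum_n u^\beta_n\phi_\beta(\omega_{\beta n})$ and $B:=\sum_\beta w'_\beta\sum_nu^\beta_n$. Conditionally on all data down to level $k-1$, applying the single-level invariance in each block $\beta$ --- and using that $\E_\omega[\phi_\beta(\omega)^{\zeta_k}]=\exp(\zeta_kX_{k-1}(\omega_\emptyset,\dots,\omega_\beta))$ by the very definition of $X_{k-1}$ --- shows that $\sum_nu^\beta_n\phi_\beta(\omega_{\beta n})$ has, jointly over $\beta$, the same law as $\exp(X_{k-1}(\omega_\emptyset,\dots,\omega_\beta))\,\hat U^\beta$, where the $\hat U^\beta$ are i.i.d.\ copies of the total mass of a Poisson process of intensity $\zeta_kx^{-1-\zeta_k}\,\d x$, independent of the data above level $k-1$; and $\sum_nu^\beta_n$ is itself one more such total mass.

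Hence $A\overset{d}{=}\sum_\beta w'_\beta\exp(X_{k-1}(\omega_\emptyset,\dots,\omega_\beta))\hat U^\beta$ and $B\overset{d}{=}\sum_\beta w'_\beta\hat U^\beta$, and, writing $w'_\beta=W'v'_\beta$, both are (up to the common factor $W'$) instances of the claimed identity \emph{in depth $k-1$}, for the cascade $(v'_\beta)$ with weights $\zeta_1,\dots,\zeta_{k-1}$ and with the i.i.d.\ mark $\omega_\gamma$ enlarged to $(\omega_\gamma,\hat U_\gamma)$, the extra coordinate being used only at the leaves. Running the depth-$(k-1)$ recursion on the leaf functions $X_{k-1}(\omega_0,\dots,\omega_{k-1})+\log\hat u$ and $\log\hat u$, and using $\E[\hat U^{\zeta_{k-1}}]<\infty$ (valid since $\zeta_{k-1}<\zeta_k$), one checks that the first step turns these into $X_{k-2}(\omega_0,\dots,\omega_{k-2})+d$ and $d$ with $d:=\zeta_{k-1}^{-1}\log\E[\hat U^{\zeta_{k-1}}]$, after which an additive constant propagates unchanged through every later step; so the recursion outputs $X_0(\omega_0)+d$ and $d$. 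By the induction hypothesis and $\E\log A=\E\log W'+\E\log\sum_\beta v'_\beta(\cdots)$ (and likewise for $B$), this gives $\E\log A=\E\log W'+\E_{\omega_0}[X_0(\omega_0)]+d$ and $\E\log B=\E\log W'+d$, and subtracting yields $X_{-1}=\E_{\omega_0}[X_0(\omega_0)]$.

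The main obstacle is the bookkeeping in this last step, and the temptation to keep the normalizing total $W$ fixed while applying the invariance: the transformed block sum $\sum_nu^\beta_n\phi_\beta(\omega_{\beta n})$ and the raw block sum $\sum_nu^\beta_n$ are built from the same atoms, so their joint law after the change of variables is awkward. The way around it is to split $\E\log(A/B)=\E\log A-\E\log B$ at the outset and reduce the two pieces separately, so that the spurious constants ($d$, $\E\log W'$, and the $\E\log\sum_nu_n$ above) cancel only at the end; this manipulation is legitimate once one checks that the relevant $\E\log^{\pm}$ are finite, which follows from the Fr\'echet-type tails of the total masses involved. The only remaining point, routine but worth stating, is the degenerate case in which some $X_\ell$ equals $+\infty$ on a set of positive measure: then the corresponding block sums diverge, and one verifies directly that both sides of the identity equal $+\infty$.
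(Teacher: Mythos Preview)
Your proposal is correct and follows the standard route: induction on the depth $k$, with the inductive step driven by the marking/scaling invariance of the Poisson process with intensity $\zeta x^{-1-\zeta}\,\d x$. The paper does not give its own proof but refers to \cite[Theorem~2.9]{pan}, and the argument there is essentially the one you outline; your device of splitting $\E\log(A/B)$ into $\E\log A-\E\log B$ and tracking the additive constants through the recursion is a clean way to manage the normalization, equivalent to Panchenko's bookkeeping.
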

In the statement above, the random variables under each expectation are implicitly assumed to be integrable. In our context, we can apply this lemma in the following way: we set $\omega_\al := z_\al$ and, for every $y_0 = (y_{0,a,i})_{a \in \{1,2\}, 1 \le i \le N}$, \ldots, $y_k = (y_{k,a,i})_{a \in \{1,2\}, 1 \le i \le N} \in \R^{2N}$,
\begin{multline}  
\label{e.def.concrete.Xk}
X_k(y_0,\ldots,y_k) 
\\:= \log \int \exp \Ll( H_N^t(\si) + \sum_{a =1}^2 \Ll( \sum_{\ell = 0}^k \Ll(2q_{a,\ell} - 2q_{a,\ell-1}\Rr)^\frac 1 2 y_{\ell,a} \cdot \sigma_a - q_{a,k}|\si_a|^2  \Rr)  \Rr) \, \d P_N(\si).
\end{multline}
We then define recursively, for every $\ell \in \{1,\ldots, k\}$,
\begin{equation}  
\label{e.def.Xell}
X_{\ell-1}(y_0,\ldots,y_{\ell-1}) := \zeta_{\ell}^{-1} \log \E_{y_{\ell}} \exp \Ll( \zeta_{\ell} X_{\ell}(y_0,\ldots,y_{\ell}) \Rr) ,
\end{equation}
where, for every $\ell \in \{0,\ldots,k\}$, we write $\E_{y_{\ell}}$ to denote the integration of the variable $y_{\ell} \in \R^{2N}$ along the standard Gaussian measure.
Proposition~\ref{p.ipdc1} then ensures that
\begin{equation*}  
-N \, \td F_N(t,\mu)  = \E_{y_0}\Ll[X_0(y_0)\Rr].
\end{equation*}
(A more careful argument would start by using Proposition~\ref{p.ipdc1} to verify that $|F_N(t,\mu)|$ is indeed integrable.)

The next lemma identifies the law of the overlap $\alpha \wedge \alpha'$ under the averaged measure~$\E \la \cdot \ra$. The proof can be found for instance in \cite[(2.82)]{pan} or \cite[Lemma~2.3]{parisi}. 
\begin{lemma}[overlaps of Poisson-Dirichlet cascades]
\label{l.overlap.pdc}
For every $\ell \in \{0,\ldots, k\}$, we have
\begin{equation*}  
\E \la \1_{\{ \al \wedge \al' = \ell\}} \ra = \zeta_{\ell+1} - \zeta_\ell.
\end{equation*}
\end{lemma}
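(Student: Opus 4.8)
The plan is to reduce the statement to the corresponding fact for the \emph{un-reweighted} Poisson--Dirichlet cascade. Unfolding the definition~\eqref{e.def.gibbs} of $\la \cdot \ra$ and writing
\begin{equation*}
W_\al := \int \exp \bigl( H_N^t(\si) + H_N^\mu(\si,\al) \bigr) \, \d P_N(\si) \qquad (\al \in \N^k), \qquad \Xi := \sum_{\al \in \N^k} v_\al W_\al ,
\end{equation*}
we have $\la \1_{\{\al \wedge \al' = \ell\}} \ra = \Xi^{-2} \sum_{\al, \al' \in \N^k} \1_{\{\al \wedge \al' = \ell\}}\, v_\al v_{\al'} W_\al W_{\al'}$. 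So the claim is an assertion about the random probability measure on $\N^k$ that assigns mass $v_\al W_\al / \Xi$ to $\al$, taken together with the ultrametric structure of $\mcl A$ recorded by the map $(\al,\al') \mapsto \al \wedge \al'$.

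The key structural point is that, by~\eqref{e.def.HNalpha}, conditionally on the disorder $(H_N(\si))_{\si}$ the factor $W_\al$ is a fixed measurable function of the ``decorations'' $z_{\al_{|0}}, z_{\al_{|1}}, \dots, z_{\al_{|k}}$ sitting on the ancestral line of the leaf $\al$, and these decorations are i.i.d.\ across the vertices of $\mcl A$ and independent of the weights $(v_\al)$. I would thus fix the disorder and invoke the invariance of Poisson--Dirichlet cascades under such ``tree-adapted'' reweightings, in the spirit of \cite{pan}: conditionally on the decorations at depths $< j$, the subtrees rooted at the children of a fixed depth-$(j-1)$ vertex carry i.i.d.\ reweighting factors, so the Bolthausen--Sznitman invariance of a Poisson point process of intensity $\zeta\, x^{-1-\zeta}\,\d x$ under i.i.d.\ multiplicative reweighting of its atoms applies and strips the reweighting off one level at a time, starting from the leaves; each step replaces the reweighted sub-cascade by a genuine Poisson--Dirichlet cascade, at the price of a multiplicative constant which is itself an i.i.d.\ function of the decorations lying strictly above, and which is therefore absorbed into the reweighting one level up. (One may first treat the case where the inequalities in~\eqref{e.def.zeta} are strict, which makes the $\zeta_j$-moment conditions needed at each step automatic.) Iterating up to the root yields that the pair formed by the reweighted random measure and the ultrametric structure has the same law as $\bigl( (v_\al)_{\al \in \N^k},\ (\al,\al')\mapsto \al\wedge\al' \bigr)$; since this object does not involve the disorder, averaging over it is trivial and
\begin{equation*}
\E \la \1_{\{\al \wedge \al' = \ell\}} \ra = \E \Bigl[ \sum_{\al, \al' \in \N^k} \1_{\{\al \wedge \al' = \ell\}}\, v_\al v_{\al'} \Bigr].
\end{equation*}

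The right-hand side is the probability that two leaves sampled independently according to the weights of the plain Poisson--Dirichlet cascade with parameters $(\zeta_\ell)$ first diverge at depth $\ell$; this is the classical overlap computation for such cascades and equals $\zeta_{\ell+1} - \zeta_\ell$ (see \cite[(2.34)]{pan} or \cite[Lemma~2.3]{parisi}). The main obstacle in this route is the level-by-level invariance step: one must check carefully that the multiplicative constants generated by each Bolthausen--Sznitman application remain i.i.d.\ functions of the decorations lying above, so that the invariance may legitimately be iterated, and that the abstract labelling of the subtrees---on which the value of $\al \wedge \al'$ depends---is preserved at each stage; the integrability bookkeeping and the final plain-cascade computation are then routine. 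It is worth noting that the very form of $H_N^\mu$ in~\eqref{e.def.HNalpha}, whose overlap structure~\eqref{e.overlap.ext.field} is prescribed by $\al\wedge\al'$, is what makes $W_\al$ tree-adapted in the required sense, so that this argument is essentially the same as in the mixed $p$-spin setting.
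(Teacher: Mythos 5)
The paper offers no proof of this lemma, delegating it to \cite[(2.82)]{pan} and \cite[Lemma~2.3]{parisi}; your argument is precisely the standard one that underlies those references. Conditioning on the disorder so that $W_\al$ is a fixed function of the decorations $(z_{\al_{|\ell}})_{0\le\ell\le k}$ on the ancestral line of $\al$, and then invoking the Bolthausen--Sznitman/Ruelle-cascade invariance under such ancestor-adapted reweightings (in Panchenko's book this is the invariance theorem for Ruelle probability cascades, which also drives Proposition~\ref{p.ipdc1}) to replace the reweighted cascade by a plain one while preserving the overlap map $(\al,\al')\mapsto\al\wedge\al'$, correctly reduces the claim to the classical computation for the unweighted cascade. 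The points you flag as needing care---that the multiplicative constants produced by each Bolthausen--Sznitman step are themselves i.i.d.\ functions of the decorations strictly above, and that the relabelling is a tree automorphism (hence preserves $\al\wedge\al'$)---are indeed the substance of the invariance theorem, and the integrability needed at each level holds because $P_N$ has compact support so $W_\al$ grows at most exponentially in the $z$'s.
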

The combination of Proposition~\ref{p.ipdc1} and Lemma~\ref{l.overlap.pdc} allows to prove Proposition~\ref{p.continuity}, see for instance \cite[Proposition~2.1]{parisi}.  

While we usually think of $\td F_N$ and $\bar F_N$ as functions of the pair of measures $\mu$, we also allow ourselves to speak of $\dr_{q_{a,\ell}} \td F_N$ and $\dr_{q_{a,\ell}} \bar F_N$; this is meant to refer to the point of view in which these are seen as functions of the families of parameters $q$ and $\zeta$ in \eqref{e.def.q} and \eqref{e.def.zeta}. Another consequence of Proposition~\ref{p.ipdc1}, which can be found for instance in \cite[Proposition~14.3.2]{Tbook2} or \cite[Lemma~2.4]{parisi}, is that the derivatives of $\td F_N$ with respect to each of the parameters $q_{a,\ell}$ in \eqref{e.def.q} are nonnegative, and they increase with $\ell$ after suitable normalization. The precise statement is as follows.
\begin{lemma}
\label{l.pos.dr_q}
For every $a \in \{1,2\}$ and $\ell\le \ell' \in \{0,\ldots, k\}$, we have
\begin{equation}
\label{e.pos.dr_q}
\dr_{q_{a,\ell}} \td F_N \ge 0,
\end{equation}
and 
\begin{equation}
\label{e.order.dr_q}
(\zeta_{\ell+1} - \zeta_{\ell})^{-1} \dr_{q_{a,\ell}} \td F_N \le (\zeta_{\ell'+1} - \zeta_{\ell'})^{-1} \dr_{q_{a,\ell'}} \td F_N .
\end{equation}
\end{lemma}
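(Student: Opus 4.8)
The plan is to feed the cavity representation $-N\td F_N(t,\mu) = \E_{y_0}\big[X_0(y_0)\big]$ supplied by Proposition~\ref{p.ipdc1} into a standard Gaussian integration-by-parts computation (essentially the one in \cite[Proposition~14.3.2]{Tbook2} or \cite[Lemma~2.4]{parisi}); the point is that this computation produces an explicit formula for $\dr_{q_{a,\ell}}\td F_N$ from which \eqref{e.pos.dr_q} and \eqref{e.order.dr_q} both follow at once --- the latter via a single application of Jensen's inequality. To lighten the bookkeeping I would first assume that all the inequalities in \eqref{e.def.zeta} and \eqref{e.def.q} are strict, so that $u_{a,\ell} := (2q_{a,\ell}-2q_{a,\ell-1})^{1/2} > 0$ for all $a,\ell$; since $\td F_N$ depends continuously on $(q,\zeta)$ by Proposition~\ref{p.continuity} and the formulas below are continuous in these parameters, the general (boundary/diagonal) case follows by approximation, with $\dr_{q_{a,\ell}}$ read as the appropriate one-sided derivative there.

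Recall the functions $X_k,X_{k-1},\ldots,X_0$ of \eqref{e.def.concrete.Xk}--\eqref{e.def.Xell}. For $\ell\in\{1,\ldots,k\}$ let $\hat\E_{y_\ell}$ denote expectation with respect to the probability density proportional to $\exp(\zeta_\ell X_\ell)$ against the standard Gaussian law of $y_\ell\in\R^{2N}$; differentiating the recursion \eqref{e.def.Xell} and using that $q_{a,m}$ enters only $X_k$,
\begin{equation*}
\dr_{q_{a,m}}(-N\td F_N) = \E_{y_0}\hat\E_{y_1}\cdots\hat\E_{y_k}\big[\dr_{q_{a,m}}X_k\big].
\end{equation*}
Writing $\la\cdot\ra_{X_k}$ for the Gibbs measure over $\si\in\R^{2N}$ implicit in \eqref{e.def.concrete.Xk} at fixed $y_0,\ldots,y_k$, I would set
\begin{equation*}
Z_m := \hat\E_{y_{m+1}}\cdots\hat\E_{y_k}\big[\la\si_a\ra_{X_k}\big]\in\R^N,
\end{equation*}
which is a function of $(y_0,\ldots,y_m)$ only, with $Z_k=\la\si_a\ra_{X_k}$.

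Next I would compute, using that $q_{a,m}$ affects $X_k$ only through $u_{a,m}$ and $u_{a,m+1}$ (and through the compensator $-q_{a,k}|\si_a|^2$ when $m=k$),
\begin{equation*}
\dr_{q_{a,m}}X_k = \frac{1}{u_{a,m}}\la y_{m,a}\cdot\si_a\ra_{X_k} - \frac{1}{u_{a,m+1}}\la y_{m+1,a}\cdot\si_a\ra_{X_k}\qquad(m<k),
\end{equation*}
with the variant $\dr_{q_{a,k}}X_k = u_{a,k}^{-1}\la y_{k,a}\cdot\si_a\ra_{X_k} - \la|\si_a|^2\ra_{X_k}$ when $m=k$. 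Plugging this in and integrating by parts in $y_k,y_{k-1},\ldots$ one level at a time, each differentiation of $\la\si_a\ra_{X_k}$ contributes a ``diagonal'' term $\la|\si_a|^2\ra_{X_k}$ plus squared-magnetization terms, while each differentiation of a tilt $\exp(\zeta_\ell X_\ell)$ contributes a factor $\zeta_\ell$; the diagonal contributions produced by the $u_{a,m}$-term and by the $u_{a,m+1}$-term are identical and cancel, and what survives is
\begin{equation*}
\dr_{q_{a,m}}\td F_N = \frac{\zeta_{m+1}-\zeta_m}{N}\,\E_{y_0}\hat\E_{y_1}\cdots\hat\E_{y_m}\big[|Z_m|^2\big]\qquad(m\in\{0,\ldots,k\}).
\end{equation*}
(In the notation of \eqref{e.def.gibbs} the right-hand side equals $N^{-1}\E\big[\la(\si_a\cdot\si_a')\1_{\{\al\wedge\al'=m\}}\ra \vb (H_N(\si))_{\si}\big]$, which is also consistent with Lemma~\ref{l.overlap.pdc}.) I expect this iterated integration by parts, and the careful treatment of the degenerate cases $u_{a,\ell}=0$, to be the only genuine work here; it is routine but must be carried out with care.

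With the last displayed formula in hand, both assertions are immediate. Its right-hand side is a nonnegative multiple of an average of the square $|Z_m|^2$, which gives $\dr_{q_{a,m}}\td F_N\ge0$, i.e.\ \eqref{e.pos.dr_q}. For \eqref{e.order.dr_q} it suffices to treat $\ell'=\ell+1$ and then iterate: since $Z_m = \hat\E_{y_{m+1}}[Z_{m+1}]$ and $\hat\E_{y_{m+1}}$ is a genuine probability average, Jensen's inequality gives $|Z_m|^2\le\hat\E_{y_{m+1}}\big[|Z_{m+1}|^2\big]$ pointwise, and applying the monotone operator $\E_{y_0}\hat\E_{y_1}\cdots\hat\E_{y_m}$ and using the formula with both $m$ and $m+1$ yields
\begin{equation*}
(\zeta_{m+1}-\zeta_m)^{-1}\dr_{q_{a,m}}\td F_N = \frac{1}{N}\E_{y_0}\hat\E_{y_1}\cdots\hat\E_{y_m}\big[|Z_m|^2\big] \le \frac{1}{N}\E_{y_0}\hat\E_{y_1}\cdots\hat\E_{y_{m+1}}\big[|Z_{m+1}|^2\big] = (\zeta_{m+2}-\zeta_{m+1})^{-1}\dr_{q_{a,m+1}}\td F_N,
\end{equation*}
which is \eqref{e.order.dr_q}.
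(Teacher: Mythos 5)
Your proposal is correct and reproduces the standard argument from the references the paper defers to (\cite[Proposition~14.3.2]{Tbook2}, \cite[Lemma~2.4]{parisi}): differentiate the recursive representation $-N\td F_N = \E_{y_0}[X_0]$ from Proposition~\ref{p.ipdc1}, carry out the iterated Gaussian integration by parts to arrive at $\dr_{q_{a,m}}\td F_N = \tfrac{\zeta_{m+1}-\zeta_m}{N}\,\E_{y_0}\hat\E_{y_1}\cdots\hat\E_{y_m}\bigl[|Z_m|^2\bigr]$, and conclude by positivity of the square and Jensen's inequality applied to $Z_m=\hat\E_{y_{m+1}}[Z_{m+1}]$. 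The paper itself gives no proof beyond these citations, so this is essentially the intended argument.
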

\begin{remark}  
Clearly, the statement of Lemma~\ref{l.pos.dr_q} is also valid with $\td F_N$ replaced by $\bar F_N$. 
It is part of the statement of this lemma that the quantity $(\zeta_{\ell+1} - \zeta_{\ell})^{-1} \dr_{q_{a,\ell}} \td F_N$ can be defined even when $\zeta_{\ell+1} = \zeta_{\ell}$, by continuity.
\end{remark}
Yet another consequence of Proposition~\ref{p.ipdc1} concerns the ``initial condition'' for $\bar F_N$. Under the assumption of \eqref{e.ass.P}, the verification that $\bar F_N(0,\mu)$ converges as $N$ tends to infinity is particularly simple.
\begin{lemma}[Initial condition for product measures]
\label{l.init} 
Recall that we assume \eqref{e.ass.P}. For every $N \ge 1$ and $\mu \in (\mcl P_1(\R_+))^2$, we have
\begin{equation*}  
\bar F_N(0,\mu) = \bar F_1(0,\mu).
\end{equation*}
\end{lemma}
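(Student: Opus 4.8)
The plan is to reduce everything to the Panchenko integration formula for Poisson--Dirichlet cascades (Proposition~\ref{p.ipdc1}), which was already set up in the discussion preceding the statement, and to exploit the product structure of $P_N$ in \eqref{e.ass.P} together with the fact that the linear terms $z_{\alpha_{|\ell},a}\cdot\sigma_a$ decouple over the $N$ coordinates.

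First I would observe that at $t = 0$ the energy $H_N^t$ vanishes, so $F_N(0,\mu)$ is a function of the cascade variables $(z_\alpha)$ and $(v_\alpha)$ alone, with no dependence on $(H_N(\sigma))_\sigma$; hence $\bar F_N(0,\mu) = \td F_N(0,\mu)$, and the identity $-N\,\td F_N(0,\mu) = \E_{y_0}[X_0(y_0)]$ applies, with $X_k$ given by \eqref{e.def.concrete.Xk} (where now $H_N^t \equiv 0$) and the recursion \eqref{e.def.Xell}. It is enough to treat $\mu$ with finite support, since both $\bar F_N(0,\cdot)$ and $\bar F_1(0,\cdot)$ extend continuously to $(\mcl P_1(\R_+))^2$ by Proposition~\ref{p.continuity} and finitely supported measures are dense there; so I fix $\zeta$ and $q$ as in \eqref{e.def.zeta}--\eqref{e.def.q}. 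The key point is then that, because $P_N = \pi_1^{\otimes N}\otimes\pi_2^{\otimes N}$ and $z_{\alpha_{|\ell},a}\cdot\sigma_a = \sum_{i=1}^N z_{\alpha_{|\ell},a,i}\sigma_{a,i}$, the integrand in \eqref{e.def.concrete.Xk} factorizes over $i\in\{1,\ldots,N\}$, so that
\[
X_k(y_0,\ldots,y_k) = \sum_{i=1}^N \Psi\big( (y_{\ell,a,i})_{0 \le \ell \le k,\ a \in \{1,2\}} \big),
\qquad
\Psi(u) := \sum_{a=1}^2 \log\int_\R \exp\Big( \sum_{\ell=0}^k (2q_{a,\ell} - 2q_{a,\ell-1})^{\frac12} u_{\ell,a}\, s - q_{a,k} s^2 \Big)\, \d\pi_a(s),
\]
where $\Psi$ is exactly the single-site ($N=1$) integrand.

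Next I would check that the recursion \eqref{e.def.Xell} is additive over summands that depend on independent blocks of Gaussians: if $X_\ell = \sum_{i=1}^N G_{\ell,i}$ with $G_{\ell,i}$ depending only on the block $(y_{m,a,i})_{m\le\ell,\ a}$, then, by independence of the $N$ blocks, the Gaussian expectation $\E_{y_\ell}$ factorizes and $\zeta_\ell^{-1}\log\E_{y_\ell}\exp(\zeta_\ell X_\ell) = \sum_{i=1}^N \zeta_\ell^{-1}\log\E_{(y_{\ell,a,i})_a}\exp(\zeta_\ell G_{\ell,i}) =: \sum_{i=1}^N G_{\ell-1,i}$, which is again a sum of block functions. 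Descending from $\ell = k$ to $\ell = 0$ gives $X_0(y_0) = \sum_{i=1}^N G_{0,i}\big((y_{0,a,i})_a\big)$, where $G_{0,i}$ is the \emph{same} function for every $i$ (namely the object produced by running the recursion on the single block $\Psi$), which is precisely the $N=1$ version $X_0^{(1)}$. Taking $\E_{y_0}$ and using that the $N$ blocks are identically distributed yields $-N\,\td F_N(0,\mu) = N\,\E_{y_0}[X_0^{(1)}(y_0)] = -N\,\td F_1(0,\mu)$, hence $\bar F_N(0,\mu) = \td F_N(0,\mu) = \td F_1(0,\mu) = \bar F_1(0,\mu)$.

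There is no serious obstacle here; the only points that require a little care are the reduction $\bar F_N(0,\mu) = \td F_N(0,\mu)$ at $t=0$ (so that the deterministic cascade-integration formula is available), the bookkeeping showing that the Panchenko recursion preserves the ``sum over independent blocks'' form --- so that integrating out each layer of Gaussians block by block coincides with integrating it out globally --- and the density/continuity argument (via Proposition~\ref{p.continuity}) used to pass from finitely supported $\mu$ to general $\mu\in(\mcl P_1(\R_+))^2$.
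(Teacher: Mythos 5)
Your argument is essentially identical to the paper's: both exploit the product structure of $P_N$ at $t=0$ to write $X_k$ in \eqref{e.def.concrete.Xk} as a sum of $N$ i.i.d.\ single-site terms, and then observe that the Panchenko recursion \eqref{e.def.Xell} commutes with this decomposition since each layer of Gaussians factorizes over the $N$ independent blocks. The paper states this more tersely (``these properties are preserved as we go along the recursive procedure''), while you spell out the block bookkeeping and add the small but correct preliminary remarks that $\bar F_N(0,\cdot)=\td F_N(0,\cdot)$ and that density of finitely supported measures plus Proposition~\ref{p.continuity} handles general $\mu\in(\mcl P_1(\R_+))^2$; none of this changes the substance.
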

\begin{proof}
The argument can be found for instance in \cite[(2.60)]{pan}; we present it briefly here for the reader's convenience. 
When $t = 0$, and under the assumption of \eqref{e.ass.P}, the definition of $X_k$ given in \eqref{e.def.concrete.Xk} can be rewritten as
\begin{equation*}  
X_k(y_0,\ldots, y_k) = \sum_{i = 1}^N \log \int \exp \Ll(\sum_{a =1}^2 \Ll( \sum_{\ell = 0}^k \Ll(2q_{a,\ell} - 2q_{a,\ell-1}\Rr)^\frac 1 2 y_{\ell,a,i}  \sigma_a - q_{a,k}\si_a^2  \Rr)  \Rr) \, \d P_1(\si).
\end{equation*}
Recall that $P_1$ is a probability measure over $\R^2$, so in the integral above, the variable $\si$ takes the form $\si = (\si_a)_{a \in \{1,2\}} \in \R^2$. In particular, we have written $X_k$ as a sum of independent and identically distributed random variables. Moreover, the law of each of these random variables does not depend on $N$. These properties are preserved as we go along the recursive procedure described in \eqref{e.def.Xell}. As we reach $X_{-1}$, all randomness has been integrated out, and the result is thus $N$ times some constant, as desired. 
\end{proof}
With an eye towards the initial condition in \eqref{e.hj}, we set, for every $\mu \in (\mcl P_1(\R_+))^2$,
\begin{equation}  
\label{e.def.psi}
\psi(\mu) := \bar F_1(0,\mu). 
\end{equation}
It is worth keeping in mind that the relatively simple definition of the initial condition in \eqref{e.def.psi} is possible only because we made the assumption in \eqref{e.ass.P} that the underlying measure has a product structure. In general, we only want to ascertain that for every $\mu \in \mcl (P_1(\R_+))^2$, 
\begin{equation*}  
\lim_{N \to \infty} \bar F_N(0,\mu) \quad \text{exists} \, ;
\end{equation*}
and in this case, we call the limit $\psi(\mu)$.
(We also use in the course of the proof that the support of $P_N$ lies in a ball of fixed radius.) Other choices of reference measure are thus possible: for instance, one may replace $P_N$ by the uniform measure on the product of two $N$-dimensional spheres of radius $\sqrt N$. See for instance \cite[part (2) of Proposition~3.1]{parisi} for a similar calcuation in this case (which itself borrows from \cite{tal.sph}).


We now state the extended version of Theorem~\ref{t.main} that will be the main focus of the rest of the paper. 
\begin{theorem}
\label{t.main.full}
For every $t \ge 0$ and $\mu \in (\mcl P_2(\R_+))^2$, we have
\begin{equation*}  
\liminf_{N \to \infty} \bar F_N(t,\mu) \ge f(t,\mu),
\end{equation*}
where $f : \R_+ \times (\mcl P_2(\R_+))^2 \to \R$ is the solution to \eqref{e.hj}.
\end{theorem}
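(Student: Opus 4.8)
The plan is to follow the by-now standard strategy for proving that an enriched spin-glass free energy is asymptotically a supersolution to the associated Hamilton-Jacobi equation, and then invoke a comparison principle. The key reduction is to restrict attention to measures $\mu = (\mu_1,\mu_2)$ that are sums of $k$ Dirac masses with equal weights $1/k$, say $\mu_a = \frac 1 k \sum_{\ell=1}^k \de_{q_{a,\ell}}$ with $q_{a,1}\le\cdots\le q_{a,k}$, so that $\bar F_N(t,\mu)$ may be viewed as a function $\bar F_N^{(k)}(t,q)$ on $\R_+\times\bar U_k$. For such $\mu$ one has the exact identities (from the computations in Section~\ref{s.intro}) expressing $\dr_t \bar F_N$ in terms of $\E\la (\si_1\cdot\si_1')(\si_2\cdot\si_2')\ra$ and each $\dr_{q_{a,\ell}}\bar F_N$ in terms of overlap expectations; combining them, the defect $\dr_t \bar F_N^{(k)} - k\sum_\ell \dr_{q_{1,\ell}}\bar F_N^{(k)}\,\dr_{q_{2,\ell}}\bar F_N^{(k)}$ is a sum of covariance-type terms measuring the failure of the overlaps $\si_a\cdot\si_a'$ to be determined by the cascade overlap $\al\wedge\al'$. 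By the finitary synchronization statement of Section~\ref{s.synch}, these covariance terms go to $0$ (in an $L^1$ sense, uniformly in $N$ once $k\to\infty$) — this is exactly the point that ``overlaps synchronize'' — so $\bar F_N^{(k)}$ is, up to an error $o_k(1)$, a supersolution to the finite-dimensional equation \eqref{e.finite.dim}. One also checks, using Lemma~\ref{l.pos.dr_q}, that $\dr_{q_{a,\ell}}\bar F_N^{(k)}\ge 0$, so the Hamiltonian is being evaluated where it is well-behaved, and the boundary behavior on the diagonal $\{q_{a,\ell}=q_{a,\ell+1}\}$ is handled by the ordered formulation of $\bar U_k$ as promised in the footnote.

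Next I would pass to the limit. By Proposition~\ref{p.continuity} the functions $\bar F_N^{(k)}$ are Lipschitz in $q$ uniformly in $N$ and $k$, and by Lemma~\ref{l.init} (and the definition \eqref{e.def.psi}) their initial traces converge to $\psi$; monotonicity/boundedness in $t$ gives local equicontinuity in $t$ as well. So along a subsequence $\bar F_N^{(k)}\to g^{(k)}$ locally uniformly, and the standard stability theory of viscosity solutions (in the precise sense set up in Section~\ref{s.visc}) upgrades the approximate-supersolution property to: $g^{(k)}$ is a viscosity supersolution of \eqref{e.finite.dim} with initial condition $\psi^{(k)}$ (the restriction of $\psi$ to equal-weight $k$-atomic measures). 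The comparison principle for \eqref{e.finite.dim} then yields $g^{(k)}\ge f^{(k)}$, where $f^{(k)}$ is the viscosity solution of \eqref{e.finite.dim}; since $f$ was \emph{defined} as the limit of the $f^{(k)}$, letting $k\to\infty$ and using the uniform Lipschitz bound to control the discretization error between $\bar F_N(t,\mu)$ and $\bar F_N^{(k)}(t,q)$ for general $\mu\in(\mcl P_2(\R_+))^2$, we obtain $\liminf_N \bar F_N(t,\mu)\ge f(t,\mu)$, which is the claim. Deducing Theorem~\ref{t.main} is then immediate: take $\mu=(\de_0,\de_0)$ and observe that $\bar F_N(t,(\de_0,\de_0))$ is precisely the left-hand side of \eqref{e.main} (the cascade part of the energy degenerates when $k=0$, $v_\emptyset=1$).

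The main obstacle is the synchronization step — showing that the covariance terms on the right-hand side of the finite-dimensional analogue of \eqref{e.naive.pde} are genuinely small. Unlike the replica-symmetric regime, one cannot hope that the overlaps $\si_a\cdot\si_a'$ concentrate; instead one must show that they are asymptotically \emph{measurable functions} of the cascade overlap $\al\wedge\al'$, and moreover that these functions are, to leading order, the ``correct'' ones dictated by the parameters $q_{a,\ell}$. This requires revisiting the Ghirlanda-Guerra / synchronization machinery of \cite{pan.multi} in a quantitative (finitary) form, with explicit rates in $k$ and error terms that survive the $N\to\infty$ limit before $k\to\infty$, and it is here that the emphasis on monotone couplings in Section~\ref{s.synch} does the real work. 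A secondary technical point, flagged in the footnote, is to make sure the viscosity-solution formalism and the comparison principle on the ordered domain $\bar U_k$ correctly accommodate the diagonal boundary; but this is a matter of careful bookkeeping rather than a conceptual difficulty.
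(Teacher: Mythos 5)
Your high-level skeleton matches the paper's: restrict to $k$-atomic measures, show the (limit of the) discretized free energy is an approximate supersolution to the finite-dimensional equation \eqref{e.finite.dim}, apply the comparison principle to compare with $f^{(k)}$, then let $k\to\infty$ using the Lipschitz estimates from Proposition~\ref{p.continuity} and the quantitative convergence \eqref{e.quant.conv}. The deduction of Theorem~\ref{t.main} via $\mu=(\de_0,\de_0)$ and $k=0$, $v_\emptyset = 1$ is also right.

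However, there is a genuine gap in your synchronization step, and it is the exact point the paper singles out as the crux. You assert that ``these covariance terms go to $0$ (in an $L^1$ sense, uniformly in $N$ once $k\to\infty$) --- so $\bar F_N^{(k)}$ is, up to an error $o_k(1)$, a supersolution,'' and then propose to upgrade via the standard stability theory of viscosity solutions. This route does not work, and the paper explicitly rules it out (end of Section~\ref{s.defs}): one cannot establish the synchronization bound uniformly, or even almost everywhere, over the parameters $(t,q)$, and even an a.e.\ bound on the PDE defect would not identify the viscosity solution. Concretely, the bound \eqref{e.cond.exp} that controls the conditional variance requires the approximate Ghirlanda--Guerra identities \eqref{e.gg2.delta}, and these do not hold for the unperturbed Gibbs measure. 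To obtain them one must (i) enlarge the model with the perturbative fields $H_N^{a,h}$ of \eqref{e.cov.HNah}, included as additional variables $x\in\R^{2+2h_+^3}$, and (ii) establish the required concentration of these fields \emph{only at a contact point}: one takes a smooth test function $\phi$ touching a subsequential limit of $\bar F_N^{(k)}$ from below, locates a nearby local minimum $(t_N,q_N,x_N)$ of $\bar F_N-\td\phi$ with $x_N\to x_\infty=(1,\ldots,1)$, and combines the global concavity of $F_N$ in $x$ with the \emph{lower} bound on $\nabla_x^2\bar F_N(t_N,q_N,x_N)$ forced by the smoothness of $\td\phi$ (step giving \eqref{e.C11.in.q}) to derive the concentration estimates \eqref{e.concentr.Hnph}--\eqref{e.concentr.sia}. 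This Hessian lower bound is absolutely not available away from the contact point, so the quantitative GG identities --- and hence the small conditional variance --- cannot be propagated globally or in $L^1$. In short: you must argue directly from Definition~\ref{def.solution}, not establish an approximate PDE and then invoke stability. Relatedly, your sketch does not mention the perturbation fields or the variables $x$ at all, which is where the real work lies; and the handling of the Neumann boundary at a contact point is not just a bookkeeping consequence of Lemma~\ref{l.pos.dr_q}, but requires the dual-cone argument of Lemma~\ref{l.dual.cone} and Step~5 of the proof (in particular the facts that $\nabla\td\phi(t_N,q_N,x_N)\in\bar U_k$, $\nabla_q(\bar F_N-\td\phi)(t_N,q_N,x_N)\in\bar U_k^*$, and that $\nabla\H$ preserves $\bar U_k$).
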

The statement of Theorem~\ref{t.main} corresponds to the case $\mu = (\de_0,\de_0)$ in Theorem~\ref{t.main.full}.
We now discuss why one should expect that $\bar F_N$ indeed converges to the solution to \eqref{e.hj}. We first observe that
\begin{equation}  
\label{e.drt.FN}
\dr_t F_N = \frac 1 N \la \frac{1}{\sqrt{2t}} H_N(\si) - \frac 1 N |\si_1|^2 |\si_2|^2 \ra.
\end{equation}
Taking the expectation, recalling \eqref{e.correl.HN}, and using a Gaussian integration by parts, see \eqref{e.gibp1}, we obtain that
\begin{equation}  
\label{e.drt.barFN}
\dr_t \bar F_N = \frac 1 {N^2} \E \la (\si_1 \cdot \si_1') (\si_2 \cdot \si_2') \ra.
\end{equation}
By the same reasoning (or see for instance \cite[(2.17)]{parisi}), we have
\begin{equation}  
\label{e.drq.barFN}
\dr_{q_{a,\ell}} \bar F_N =\frac 1 N \E \la \si_a \cdot \si_a' \1_{\{\al \wedge \al' = \ell\}} \ra.
\end{equation}
Using Lemma~\ref{l.overlap.pdc}, we can rewrite this identity as
\begin{equation*}  
(\zeta_{\ell+1} - \zeta_\ell)^{-1} \dr_{q_{a,\ell}} \bar F_N =\frac 1 N \E \la \si_a \cdot \si_a' \vb \al \wedge \al' = \ell \ra,
\end{equation*}
where the conditional expectation is understood with respect to the measure $\E \la \cdot \ra$. We deduce that 
\begin{align*}  
\int \dr_{\mu_1} \bar F_N \, \dr_{\mu_2} \bar F_N \ \d \hat \mu 
& = \sum_{\ell = 1}^k (\zeta_{\ell+1} - \zeta_{\ell})^{-1} \dr_{q_{1,\ell}} \bar F_N \, \dr_{q_{2,\ell}} \bar F_N 
\\
& = \frac 1 {N^2} \sum_{\ell = 1}^k \E \la \1_{\{\al \wedge \al' = \ell\}} \ra  \E \la \si_1 \cdot \si_1' \vb \al \wedge \al' = \ell \ra \E \la \si_2 \cdot \si_2' \vb \al \wedge \al' = \ell \ra  
\\
& = \frac 1 {N^2} \E \la \E \la \si_1 \cdot \si_1' \vb \al \wedge \al' \ra \E \la \si_2 \cdot \si_2' \vb \al \wedge \al' \ra  \ra.
\end{align*}
We can now compare this expression with \eqref{e.drt.barFN}, and also with the situation encountered in the more naive attempt leading to \eqref{e.naive.pde}. In the naive attempt, we could only hope to close the equation in situations for which the overlaps $\si_a \cdot \si_a'$ are concentrated. In our current more refined attempt, we have instead
\begin{multline*}  
\dr_t \bar F_N - \int \dr_{\mu_1} \bar F_N \, \dr_{\mu_2} \bar F_N \ \d \hat \mu 
\\
= \frac{1}{N^2} \E \la \Ll( \si_1\cdot \si_1' - \E \la \si_1 \cdot \si_1' \vb \al \wedge \al' \ra  \Rr)\Ll( \si_2\cdot \si_2' - \E \la \si_2 \cdot \si_2' \vb \al \wedge \al' \ra  \Rr)  \ra,
\end{multline*}
and in particular,
\begin{equation}  
\label{e.approxN.hj}
\Ll| \dr_t \bar F_N - \int \dr_{\mu_1} \bar F_N \, \dr_{\mu_2} \bar F_N \ \d \hat \mu \Rr| \le \frac 1 {N^2} \sum_{a \in \{1,2\}} \E \la \Ll( \si_a\cdot \si_a' - \E \la \si_a \cdot \si_a' \vb \al \wedge \al' \ra  \Rr)^2 \ra.
\end{equation}
In other words, we need to argue that the \emph{conditional variance} of the overlaps $\si_a \cdot \si_a'$, given the overlap $\al \wedge \al'$, is small. 
This is precisely what the synchronization property should give us. (Moreover, there is some flexibility in that we do not need that this conditional variance be small for any single choice of the parameters.) From this point of view, the synchronization property becomes central even for models with a single type, since the point is to monitor synchronization with the extraneous random variables provided by the Poisson-Dirichlet cascade.

%
%
%
%
%
%

\section{Viscosity solutions}
\label{s.visc}

The first goal of this section is to clarify the exact notion of solution for finite-dimensional approximations to \eqref{e.hj}, and show comparison principles for these finite-dimensional problems. The second goal is to show that as we increase the dimension, the sequence of finite-dimensional solutions converges to some limit. We then interpret the limit as the solution to \eqref{e.hj}. 

In this context, the convex cone $\R_+^2$ plays a fundamental role. In more general models of mean-field spin glasses, this convex cone would have to be replaced by the set of positive semidefinite matrices. In the setting of the bipartite model, matrices are not obviously showing up because, in some sense, we are only looking at the diagonal entries of a $2$-by-$2$ matrix: observe that there is no term of the form $\sigma_1 \cdot\sigma_2'$ on the right side of~\eqref{e.correl.HN}. In order to avoid future repetitions, I found it useful to write this section so that it covers the two settings at once. Throughout this section, we keep the integer $D \ge 1$ fixed, and denote by $S^D$, $S^D_+$, and $S^D_{++}$ the set of symmetric $D$-by-$D$ matrices, and the subsets of positive semidefinite and positive definite matrices respectively. We define
\begin{equation}
\label{e.def.mclC}
\mcl C := (0,\infty)^D \qquad \text{ or } \qquad \mcl C := S^D_{++},
\end{equation}
its closure, given respectively by
\begin{equation}
\label{e.def.bar.mclC}
\bar{\mcl C} := \R_+^D \qquad \text{ or } \qquad \bar{\mcl C} := S^D_+,
\end{equation}
and observe that $\bar{\mcl C}$ is a convex cone within its natural ambient vector space, namely
\begin{equation}
\label{e.def.mclE}
\mcl E := \R^D \qquad \text{ or } \qquad \mcl E := S^D.
\end{equation}
When $\mcl E = S^D$, we interpret the scalar product between two matrices $a = (a_{dd'})_{1 \le d,d' \le D}$ $b = (b_{dd'})_{1 \le d,d' \le D} \in S^D$ according to 
\begin{equation*}  
a \cdot b := \sum_{d,d' = 1}^D a_{dd'} b_{dd'} = \tr(a^*b),
\end{equation*}
with $a^*$ denoting the transpose of the matrix $a$. We also write $|a| := (a \cdot a)^\frac 1 2$. In both settings, the convex cone $\bar{\mcl C}$ defines a partial order: for every $x, y \in \mcl E$, we write $x \le y$ whenever $y-x \in \bar{\mcl C}$. We could also use the notation $x < y$ whenever $y-x \in \mcl C$, however I will refrain from doing so, in order to avoid possible confusions that would arise from the fact that the conjunction of $x \le y$ and $x \neq y$ does not imply $x < y$.

Let $K \ge 1$ be an integer. We define the open set
\begin{equation}
\label{e.def.Uk}
U_K := \Ll\{ x  = (x_1,\ldots, x_K) \in \mcl C^K \ : \ \forall k \in \{1,\ldots, K-1\}, \ x_{k+1} - x_k \in \mcl C \Rr\},
\end{equation}
and its closure
\begin{equation}
\label{e.def.barUk}
\bar U_K := \Ll\{ x  = (x_1,\ldots, x_K) \in \bar{\mcl C}^K \ : \ x_1 \le \cdots \le x_K  \Rr\}.
\end{equation}
The first goal of this section is to study the existence and uniqueness of solutions to the equation
\begin{equation}
\label{e.hj.finite.nobd}
\dr_t f - \msf H(\nabla f) = 0 \qquad \text{in } (0,T) \times U_K,
\end{equation}
for a given locally Lipschitz function $\msf H : \mcl E^K \to \R$, $T \in (0,\infty]$, and with a prescribed initial condition at $t = 0$. In the expression above, we use the notation, with the understanding that $f = f(t,x)$ with $x = (x_1,\ldots, x_K)$, 
\begin{equation*}  
\nabla f := (\dr_{x_1} f, \ldots, \dr_{x_K} f),
\end{equation*}
where in this expression, each $\dr_{x_k} f$ takes values in the set $\mcl E$. We will also impose a Neumann boundary condition on $\dr U_K$ for solutions to \eqref{e.hj.finite.nobd}. Since the domain $\bar U_K$ has corners, we define the outer normal to a point $x \in \dr U_K$ as the set
\begin{equation*}  
\n(x) := \Ll\{ \nu \in \mcl E^K \ : |\nu| = 1 \ \text{ and } \ \forall y \in \bar U_K, \ (y-x) \cdot \nu  \le 0\Rr\} .
\end{equation*}
(This definition would have to be modified for non-convex domains.)
To display the Neumann boundary condition, we write the equation formally as
\begin{equation}
\label{e.hj.finite}
\Ll\{
\begin{aligned}  
& \dr_t f - \msf H(\nabla f) = 0 & \qquad & \text{in } (0,T) \times U_K, \\
& \n \cdot \nabla f = 0 & \qquad & \text{on } (0,T)\times \dr U_K.
\end{aligned}
\Rr.
\end{equation}
In order to study the equation~\eqref{e.hj.finite}, we rely on the notion of viscosity solutions. Although the techniques used here to handle the equation \eqref{e.hj.finite} do not differ much from classical arguments, I could not find results in the literature that would prove the well-posedness of viscosity solutions in non-smooth domains such as $\bar U_K$. The best result I could find is~\cite{dupish}, where the authors consider the case where the domain is the intersection of a finite number of open sets with a smooth boundary that satisfy certain conditions. 

Our main interest for studying solutions to \eqref{e.hj.finite} resides in the fact that we will then define the solution to \eqref{e.hj} as the limit of solutions to such finite-dimensional problems. Several other works have also considered Hamilton-Jacobi equations posed on spaces of probability measures or other infinite-dimensional spaces \cite{cl1,cl2,cl3, fenkat, fenkur, carqui, carsou, gan08, ganswi, amb14, car10}. However, I am not aware of results that show the well-posedness of equations of the type of \eqref{e.hj}; or that include the handling of a boundary condition; or that discuss the convergence of finite-dimensional approximations. These aspects will be covered here.

The remainder of this section is made of two parts. We first study finite-dimensional equations of the form \eqref{e.hj.finite}; and then show how to pass to the limit and identify the solution to \eqref{e.hj}.

\subsection{Analysis of finite-dimensional equations}
The precise definition of solution to the equation \eqref{e.hj.finite} reads as follows.

\begin{definition}  
\label{def.solution}
We say that a function $f \in C([0,T) \times \bar U_K)$ is a \emph{viscosity subsolution} to~\eqref{e.hj.finite} if for every $(t,x) \in (0,T)\times \bar U_K$ and $\phi \in C^\infty((0,T)\times  \bar U_K)$ such that $(t,x)$ is a local maximum of $f-\phi$, we have
\begin{equation}  
\label{e.interior.cond.subsol}
\Ll(\dr_t \phi - \H(\nabla \phi)\Rr)(t,x) \le 0 \quad \text{ if } x \in U_K,
\end{equation}
while, if $x \in \dr U_K$, 
\begin{equation}  
\label{e.bdy.cond.subsol}
\min \Ll( \inf_{\nu \in \n(x)} \nabla \phi \cdot \nu, \dr_t \phi - \H(\nabla \phi)\Rr) (t,x) \le 0.
\end{equation}
We say that a function $f \in C([0,T) \times \bar U_K)$ is a \emph{viscosity supersolution} to~\eqref{e.hj.finite} if for every $(t,x) \in (0,T)\times \bar U_K$ and $\phi \in C^\infty((0,T)\times \bar U_K)$ such that $(t,x)$ is a local minimum of $f-\phi$, we have 
\begin{equation*}  
\Ll(\dr_t \phi - \H(\nabla \phi)\Rr)(t,x) \ge 0 \quad \text{ if } x \in U_K,
\end{equation*}
while, if $x \in \dr U_K$, 
\begin{equation}  
\label{e.bdy.cond.supersol}
\max \Ll( \sup_{\nu \in \n(x)} \nabla \phi \cdot \nu, \dr_t \phi - \H(\nabla \phi)\Rr) (t,x) \ge 0.
\end{equation}
We say that a function $f \in C([0,T) \times \bar U_K)$ is a \emph{viscosity solution} to~\eqref{e.hj.finite} if it is both a viscosity subsolution and a viscosity supersolution to \eqref{e.hj.finite}. 
\end{definition}
We may drop the qualifier \emph{viscosity} and simply talk about subsolutions, supersolutions, and solutions to~\eqref{e.hj.finite}. We say that a function $f \in C([0,T) \times \bar U_K)$ is a \emph{solution} to
\begin{equation}
\label{e.hj.finite.subsol}
\Ll\{
\begin{aligned}  
& \dr_t f - \msf H(\nabla f) \le 0 & \qquad & \text{in } (0,T) \times U_K, \\
& \n \cdot \nabla f \le 0 & \qquad & \text{on } (0,T)\times \dr U_K,
\end{aligned}
\Rr.
\end{equation}
whenever it is a subsolution to \eqref{e.hj.finite}; and similarly with the inequalities reversed for supersolutions.

Historically, the notion of viscosity solutions emerged from the following construction of solutions: for a small parameter $\ep > 0$, one considers the solution of the partial differential equation 
\begin{equation*}  
\Ll\{
\begin{aligned}  
& \dr_t f_\ep - \H(\nabla f_\ep) = \ep \Delta f_\ep,
 & \qquad & \text{in } (0,T) \times U_K, \\
& \n \cdot \nabla f_\ep = 0 & \qquad & \text{on } (0,T)\times \dr U_K,
\end{aligned}
\Rr.
\end{equation*}
and then one identifies the viscosity solution to \eqref{e.hj.finite} as the limit of $f_\ep$ as $\ep$ tends to zero. As will be seen below, the limit satisfies a form of maximum principle, as each of these approximations do. One can also consult \cite[Section III.10.1]{evans} for more intuition concerning the definition of viscosity solutions.

The most useful result concerning solutions to \eqref{e.hj.finite} for our purposes is a comparison principle. For every $x \in \mcl E^K$, we write
\begin{equation*}  
|x| := \Ll( \sum_{k = 1}^K |x_k|^2\Rr)^\frac 1 2,
\end{equation*}
where $|x_k|$ stands for the standard Euclidean norm in $\mcl E$, 
and, for every $r \in \R$, we write $r_+ := \max(r,0)$.
\begin{proposition}[Comparison principle]
\label{p.comp}
Let $T \in (0,\infty)$, and let $u$ and $v$ be respectively a sub- and a super-solution to \eqref{e.hj.finite} that are both uniformly Lipschitz continuous in the $x$ variable. 
We have
\begin{equation}  
\label{e.comp}
\sup_{\Ll[0,T\Rr) \times \bar U_K} (u-v) = \sup_{\{0\}\times \bar U_K} (u-v).
\end{equation}
More precisely, let
\begin{equation}  
\label{e.def.L}
L := \max(\|\, |\nabla u| \, \|_{L^\infty([0,T)\times U_K)}, \|\, |\nabla v| \, \|_{L^\infty([0,T)\times U_K)}),
\end{equation}
and, for some arbitrary $\delta > 0$, let
\begin{equation}  
\label{e.def.V}
V := \sup \Ll\{ \frac{|\H(p') - \H(p)|}{|p'-p|} \ : \ |p|, |p'| \le L + \delta \Rr\} .
\end{equation}
For every $R,M \in \R$ such that
\begin{equation}  
\label{e.growth.M}
M > \sup_{0 \le t < T, x \in \bar U_K} \frac{u(t,x) - v(t,x)}{1+|x|},
\end{equation}
the mapping
\begin{equation}  
\label{e.comp.precise}
(t,x)  \mapsto  u(t,x) - v(t,x) - M \Ll( |x| + Vt - R \Rr)_+
\end{equation}
achieves its supremum at a point in $\{0\} \times \bar U_K$.
\end{proposition}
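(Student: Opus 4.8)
The plan is to run the classical doubling-of-variables argument, adapted to the unbounded domain $\bar U_K$ and to the corner geometry of its boundary, with the penalization term $M(|x| + Vt - R)_+$ playing two roles: it localizes the supremum to a bounded region of $\bar U_K$ (so that the sup is attained), and it is designed so that the auxiliary function $w := u - v - M(|x| + Vt - R)_+$ is itself (in the viscosity sense) a subsolution of a Hamilton–Jacobi inequality that forbids it from developing an interior maximum in $t > 0$. First I would fix $\delta > 0$, let $L$ and $V$ be as in \eqref{e.def.L} and \eqref{e.def.V}, and fix $M, R$ as in the statement. Because $u$ and $v$ are globally Lipschitz in $x$, $w(t,x) \to -\infty$ as $|x| \to \infty$ uniformly in $t \in [0,T)$ (the term $-M|x|$ wins by the choice \eqref{e.growth.M}), so it suffices to show $\sup_{[0,T) \times \bar U_K} w = \sup_{\{0\} \times \bar U_K} w$; the supremum on the left is then attained by the argument below.

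The core is the doubling argument. Suppose for contradiction that $\sup w$ is not attained on $\{t=0\}$; then, after the standard device of subtracting $\eta/(T-t)$ (with $\eta \to 0$) to push the maximizing time away from $T$ and to get a strict interior inequality, there exist $(t_0, x_0)$ with $t_0 > 0$ achieving a maximum of $w(t,x) - \eta/(T-t)$. For $\beta > 0$ large, consider
\begin{equation*}
\Phi_\beta(t, x, y) := u(t,x) - v(t,y) - \tfrac{\beta}{2}|x - y|^2 - M\bigl(\tfrac{|x|+|y|}{2} + Vt - R\bigr)_+ - \tfrac{\eta}{T-t} - \gamma |x - x_0|^2,
\end{equation*}
with a small quadratic localization around $x_0$. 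Let $(t_\beta, x_\beta, y_\beta)$ be a maximizer (it exists by the coercivity just noted). The usual estimates give $\beta |x_\beta - y_\beta|^2 \to 0$ and $t_\beta \to t_0 > 0$, $x_\beta, y_\beta \to x_0$, so for $\beta$ large $t_\beta \in (0,T)$. Apply the definition of subsolution for $u$ at $(t_\beta, x_\beta)$ with test function $\phi$ equal to the $x$-part of $\Phi_\beta$ plus the constant, and the definition of supersolution for $v$ at $(t_\beta, y_\beta)$. The matching momenta are $p_x = \beta(x_\beta - y_\beta) + M\nabla(\tfrac{|x|}{2})\cdot(\dots) + 2\gamma(x_\beta - x_0)$ and $p_y = \beta(x_\beta - y_\beta) - M(\dots)$, which differ by $O(M) + o(1)$ and have norm $\le L + \delta$ for $\beta$ large and $\gamma, \eta$ small; subtracting the two inequalities and using the definition of $V$ in \eqref{e.def.V} to control $|\H(p_x) - \H(p_y)|$ by $V|p_x - p_y| \le V \cdot M \cdot |\nabla(|x|)|$-type terms, one finds that the time-derivative contribution $\eta/(T-t_\beta)^2$ from $-\eta/(T-t)$ together with the chosen speed $V$ in the penalization exactly beats the Hamiltonian mismatch, yielding $\eta/(T-t_\beta)^2 \le o(1)$, a contradiction as $\beta \to \infty$ then $\eta$ fixed.

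The main obstacle — and the reason this needs care beyond the textbook case — is the boundary: when $x_\beta$ or $y_\beta$ lies on $\dr U_K$, one must invoke \eqref{e.bdy.cond.subsol}/\eqref{e.bdy.cond.supersol}, and the Neumann alternative $\inf_{\nu \in \n(x)} \nabla\phi \cdot \nu \le 0$ must be ruled out so that the interior Hamilton–Jacobi inequality is the one that actually holds. The trick is that the penalization $M(\tfrac{|x|+|y|}{2} + Vt - R)_+$ is built from $|x|$, whose gradient $x/|x|$ points \emph{out} of the cone $\bar{\mcl C}$ (since $\bar{\mcl C}$ is a convex cone, for $x \in \bar{\mcl C}$ one has $x/|x| \cdot (y - x) \le 0$ is false in general — rather $x \cdot y \ge 0$ for $y \in \bar{\mcl C}$, so $x/|x|$ has nonnegative inner product with the inward cone directions); consequently the extra momentum contributed by the penalization pushes $\nabla\phi \cdot \nu$ strictly positive for every outer normal $\nu \in \n(x)$, killing the Neumann branch. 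One has to check this compatibility separately for the cone $\mcl C = (0,\infty)^D$ and for $\mcl C = S^D_{++}$ (where the analogous fact is $\tr(x\,y) \ge 0$ for $x, y \succeq 0$, and an outer normal at a boundary point of $S^D_+$ is $-vv^*$ for $v$ a null vector, giving $x/|x| \cdot (-vv^*) = -v^* x v/|x| \le 0$, with the penalization shifting it in the favorable direction) — I expect this case check, plus keeping track of the corner structure of $\bar U_K$ as a finite intersection $\{x_1 \in \bar{\mcl C}\} \cap \{x_2 - x_1 \in \bar{\mcl C}\} \cap \cdots$, to be the genuinely delicate part of the proof. Once the contradiction is reached, \eqref{e.comp} follows, and the attainment of the supremum of the mapping \eqref{e.comp.precise} on $\{0\} \times \bar U_K$ is exactly the coercivity statement established at the outset.
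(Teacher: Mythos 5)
There is a genuine gap in the mechanism you use to rule out the Neumann alternative at the boundary. You claim that the gradient of the penalization $M(|x|+Vt-R)_+$ — which is proportional to $x/|x|$ — "pushes $\nabla\phi\cdot\nu$ strictly positive" for every outer normal $\nu\in\n(x)$. This is not the case: since $\bar U_K$ is a cone, for any $x\in\dr U_K$ and $\nu\in\n(x)$ one has $x\cdot\nu=0$ (take $y=\lambda x\in\bar U_K$ in the definition of $\n(x)$ and vary $\lambda$), so the penalization contributes \emph{exactly zero} to the boundary term, neither positive nor negative. Your own computation in the $S^D_+$ example already exhibits this: you write $x/|x|\cdot(-vv^*)=-v^*xv/|x|\le 0$, but since $v$ is a null vector of $x$, this quantity is $0$, and in any case "$\le 0$" is the wrong sign for killing the Neumann branch of a subsolution — you would need it to be strictly positive. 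The quadratic doubling term $\al(x_\al-x'_\al)$ gives $\ge 0$ against the normal (not strict), and the localizer $\gamma|x-x_0|^2$ gives a sign-indefinite contribution tending to zero, so nothing in your auxiliary function forces the interior HJ inequality to hold at boundary maximizers. Without that, the argument does not close.

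The paper supplies the missing ingredient by adding a carefully chosen \emph{linear} tilt $-\ep\sum_{k=1}^{K} k\,\mathrm{Id}\cdot(x_k+x'_k)$ to the test function. Its gradient $-\ep\vec{\imath}$ satisfies the \emph{strict} uniform bound $-\vec{\imath}\cdot\nu\ge c>0$ for every $\nu\in\n(x)$ and every $x\in\dr U_K$ (this is \eqref{e.lowerbound.dot}, and its proof hinges on the identification of the dual cone $\bar U_K^*$ in Lemma~\ref{l.dual.cone}, which in turn takes seriously the full corner structure of $\bar U_K$ — not just the boundary of $\bar{\mcl C}$ in each coordinate, but the ordering constraints $x_{k+1}-x_k\in\bar{\mcl C}$). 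This tilt is what rules out both boundary alternatives \eqref{e.constraint.from.u.2} and \eqref{e.constraint.from.v.2}. It also forces a compensating term $C_0\ep t$ in the auxiliary function and a careful enlargement of the gradient bound from $L$ to $L+\delta$ so that the Lipschitz constant $V$ from \eqref{e.def.V} still applies; none of this bookkeeping appears in your sketch. Your high-level plan (doubling, penalization by $M(|x|+Vt-R)_+$, $\ep/(T-t)$ barrier) matches the paper, but the boundary argument — which you yourself identify as "the genuinely delicate part" — is where your proposal fails as written.
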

Before turning to the proof of this proposition, it will be useful to identify the cone dual to the convex cone $\bar U_K$. 
\begin{lemma}[Dual cone to $\bar U_K$]
\label{l.dual.cone}
Let $\bar U_K^*$ denote the cone dual to $\bar U_K$, that is,
\begin{equation}  
\label{e.def.baruk*}
\bar U_K^* := \Ll\{ x \in \mcl E^K \ : \ \forall v\in \bar U_K, \ x \cdot v \ge 0\Rr\} .
\end{equation}
We have
\begin{equation}  
\label{e.ident.baruk*}
\bar U_K^* = \Ll\{ x \in \mcl E^K \ : \ \forall k \in \{ 1,\ldots, K\}, \ \sum_{\ell = k}^K x_\ell \ge 0 \Rr\} ,
\end{equation}
and
\begin{equation}  
\label{e.double.dual.uk}
\bar U_K = \Ll\{ v \in \mcl E^K \ : \ \forall x \in \bar U_K^*, \ x \cdot v \ge 0 \Rr\} .
\end{equation}
\end{lemma}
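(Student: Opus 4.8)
The plan is to realize $\bar U_K$ as the image of the self-dual cone $\bar{\mcl C}^K$ under an invertible linear map, and then to read off both assertions from the standard duality calculus for convex cones. Concretely, I would introduce the linear map $L : \mcl E^K \to \mcl E^K$ defined by $(Ly)_k := y_1 + \cdots + y_k$ for $k \in \{1,\ldots,K\}$. This map is a bijection, with inverse sending $x$ to its successive differences $(x_1, x_2 - x_1, \ldots, x_K - x_{K-1})$. Moreover, $Ly \in \bar U_K$ holds precisely when $y_1 = (Ly)_1 \in \bar{\mcl C}$ and $y_{k+1} = (Ly)_{k+1} - (Ly)_k \in \bar{\mcl C}$ for every $k$, that is, when $y \in \bar{\mcl C}^K$; hence $L(\bar{\mcl C}^K) = \bar U_K$. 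In particular $\bar U_K$ is a closed convex cone, being the image under a linear homeomorphism of the closed convex cone $\bar{\mcl C}^K$.

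Next I would compute the adjoint of $L$ with respect to the inner product $x \cdot v = \sum_k x_k \cdot v_k$ on $\mcl E^K$: from $\sum_k z_k \cdot \big( \sum_{\ell \le k} y_\ell\big) = \sum_\ell \big( \sum_{k \ge \ell} z_k\big) \cdot y_\ell$ one reads $(L^* z)_\ell = \sum_{k = \ell}^K z_k$. I would then invoke the elementary identity $(L\mcl K)^* = (L^*)^{-1}(\mcl K^*)$, valid for any linear map $L$ and any cone $\mcl K$; this needs no closedness, since $z \cdot Lk \ge 0$ for all $k \in \mcl K$ is by definition equivalent to $L^* z \in \mcl K^*$. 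The key structural input at this stage is that both candidate cones $\bar{\mcl C} = \R_+^D$ and $\bar{\mcl C} = S^D_+$ are self-dual in their ambient spaces (for $S^D_+$, with respect to the trace pairing $a \cdot b = \tr(a^*b)$), so that $(\bar{\mcl C}^K)^* = \bar{\mcl C}^K$. Combining, $\bar U_K^* = (L^*)^{-1}(\bar{\mcl C}^K) = \{ x \in \mcl E^K : \forall k \in \{1,\ldots,K\}, \ \sum_{\ell = k}^K x_\ell \in \bar{\mcl C}\}$, which is exactly \eqref{e.ident.baruk*} once one recalls that $y \ge 0$ means $y \in \bar{\mcl C}$.

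Finally, \eqref{e.double.dual.uk} is nothing but the bipolar theorem $\bar U_K^{**} = \bar U_K$, which applies because $\bar U_K$ has just been shown to be a closed convex cone. I do not expect any genuine obstacle: the argument is essentially bookkeeping around one structural fact. The only points meriting an explicit word are the self-duality of $S^D_+$ under the trace inner product (so that the computation covers the matrix setting uniformly with the scalar one) and the purely formal duality identity for images of cones under linear maps; everything else is immediate from the invertibility of $L$.
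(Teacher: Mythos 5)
Your proof is correct, and it takes a genuinely different route from the paper's. The paper proves \eqref{e.ident.baruk*} by two direct inclusions: it verifies $\supseteq$ via the Abel-summation identity \eqref{e.discr.ibp}, and $\subseteq$ by exhibiting, for any $x$ outside the right-hand set, an explicit $v \in \bar U_K$ (built from the projector onto the negative eigenspace of some partial sum $\sum_{\ell \ge k} x_\ell$) with $x \cdot v < 0$. It then proves \eqref{e.double.dual.uk} by a similar hands-on argument, explicitly noting (and declining) the alternative of invoking the bipolar theorem. You instead factor $\bar U_K$ as $L(\bar{\mcl C}^K)$ for the cumulative-sum map $L$, compute $L^*$ (which is precisely \eqref{e.discr.ibp} in disguise), and read off the dual from the formal identity $(L\mcl K)^* = (L^*)^{-1}(\mcl K^*)$ together with the self-duality of $\bar{\mcl C}$; the double-dual identity then comes for free from the bipolar theorem once you observe $\bar U_K = L(\bar{\mcl C}^K)$ is a closed convex cone. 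Your route is more systematic and arguably cleaner: it replaces the ad hoc construction of the counterexample vector $v$ by the self-duality of $S^D_+$ applied componentwise, and it makes the structural reason transparent (namely that $\bar U_K$ is a linear isomorph of a self-dual product cone). The paper's route is more elementary and self-contained, avoiding any appeal to the bipolar theorem, which in context is proved only later (Step 1 of Proposition~\ref{p.boundary}). One small point you use implicitly and could state: the dual of a product cone $\bar{\mcl C}^K$ in $\mcl E^K$ is the product of the duals, so that self-duality of $\bar{\mcl C}$ gives $(\bar{\mcl C}^K)^* = \bar{\mcl C}^K$; this is immediate but worth a sentence since the whole reduction hinges on it.
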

\begin{proof}
For concreteness, we write the proof in the case when $\mcl C = S^D_{++}$. 
Let $x,v \in (S^D)^K$. Setting $v_0 := 0$, we have
\begin{equation}  
\label{e.discr.ibp}
x \cdot v  = \sum_{k = 1}^K x_k \cdot v_k
 = \sum_{k = 1}^K \sum_{\ell = k}^K x_\ell \cdot (v_k - v_{k-1}).
\end{equation}
It is therefore clear that the set on the right side of \eqref{e.ident.baruk*} is contained in $\bar U_K^*$ (recall that if $a, b \in S^D_+$, then $a\cdot b = |\sqrt{a} \sqrt{b}|^2 \ge 0$). Conversely, if $x \in (S^D)^K$ does not belong to the set on the right side of \eqref{e.ident.baruk*}, then there exists $k \in \{1,\ldots, K\}$ such that 
\begin{equation*}  
\sum_{\ell = k}^K x_\ell \notin S^D_+. 
\end{equation*}
Letting $\mathfrak{p}$ denote the orthogonal projection onto the eigenspaces with negative eigenvalues of the matrix on the left side of the display above, and setting
\begin{equation*}  
v = \Big(\underbrace{0,\ldots, 0}_{k-1\text{ terms}}, \mathfrak{p},\ldots , \mathfrak{p}\Big) \in \bar U_K,
\end{equation*}
we find that $x \cdot v < 0$, so $x \notin \bar U_K^*$. This shows \eqref{e.ident.baruk*}. 

The proof of \eqref{e.double.dual.uk} could be derived from a general statement concerning the bidual of closed convex cones; see Step 1 of the proof of Proposition~\ref{p.boundary} below. We rather provide with a more elementary and explicit argument. It is clear from \eqref{e.discr.ibp} that $\bar U_K$ is contained in the set on the right side of \eqref{e.double.dual.uk}. Conversely, recall that a matrix $a \in S^D$ belongs to $S^D_+$ if and only if, for every $b \in S^D_+$, we have $a \cdot b \ge 0$ (see also \cite[Lemma~2.2]{HJrank}). Let $v$ belong to the set on the right side of~\eqref{e.double.dual.uk}. To see that $v \in \bar U_K$, it thus suffices to show that for every $b_1, \ldots, b_K \in S^D_+$, one can find $x_1, \ldots, x_K \in S^D$ such that, for every $k \in \{1,\ldots, K\}$, we have
\begin{equation*}  
\sum_{\ell = k}^K x_\ell = b_k.
\end{equation*}
It suffices to set, for every $k \in \{1,\ldots, K\}$, $x_k := b_k - b_{k+1}$, with the notation $b_{K+1} := 0$.
\end{proof}

\begin{proof}[Proof of Proposition~\ref{p.comp}]
For concreteness, we write the proof in the case when $\mcl C = S^D_{++}$. The case of $\mcl C = (0,\infty)^D$ is only easier.

Since the second part of the statement implies the first part, we focus on the former. Without loss of generality, we assume that the functions $u$ and $v$ are continuous on $[0,T] \times \bar U_K$ (once the result is proved in this case, we can obtain the general case by approximating $T$ with a sequence that converges to $T$ increasingly). We argue by contradiction, and assume that the mapping
\begin{equation*}  
(t,x) \mapsto u(t,x) - v(t,x) - M \Ll( |x| + Vt - R \Rr)_+
\end{equation*}
does not achieve its supremum on $\{0\} \times \bar U_K$. Let $\ep_0 > 0$, let $\theta \in C^\infty(\R)$ be an increasing smooth function such that
\begin{equation*}  
\forall r \in \R, \quad (r-\ep_0)_+ \le \theta(r) \le r_+,
\end{equation*}
and  consider 
\begin{equation}  
\label{e.def.Phi}
\Phi(t,x) := M\theta \Bigg( \Ll( \ep_0 + \sum_{k = 1}^K |x_k|^2\Rr)^\frac 1 2   + Vt - R \Bigg).
\end{equation}
We fix $\ep_0 \in (0,1]$ sufficiently close to $0$ so that
\begin{equation}
\label{e.to.contradict}
\sup_{\Ll[0,T\Rr) \times \bar U_K} \Ll( u-v-\Phi \Rr) > \sup_{\{0\} \times \bar U_K} \Ll( u-v-\Phi \Rr).
\end{equation}
For every $k \in \{1,\ldots, K\}$, we have
\begin{equation}  
\label{e.drx.phi}
\dr_{x_k} \Phi(t,x) =  \frac{M x_k}{ \Ll( \ep_0 + \sum_{\ell = 1}^K |x_\ell|^2\Rr)^\frac 1 2}\theta' \Bigg( \Ll( \ep_0 + \sum_{\ell = 1}^K |x_\ell|^2\Rr)^\frac 1 2   + Vt - R \Bigg)
\end{equation}
In particular,  we see that 
\begin{equation}
\label{e.comp.drt.drx.phi}
\dr_t \Phi \ge V |\nabla \Phi|.
\end{equation}
We also record for future use that for every $t \ge 0$ and $x \in\bar U_K$,
\begin{equation}
\label{e.lower.phi}
\Phi(t,x) \ge M \Ll( |x|+ Vt - R - 1 \Rr)_+. 
\end{equation}
We set $C_0 := 2 V |\vec{\imath}|+1$, where the vector $\vec{\imath}$ is explicitly defined below in \eqref{e.def.vecimath}. For some constant $\ep > 0$ to be determined, we define the function
\begin{equation*}  
\chi(t,x,x') := \Phi(t,x)+ C_0\ep t + \frac{\ep}{T-t} - \ep \sum_{k = 1}^K k\, \mathrm{Id} \cdot (x_k + x'_k),
\end{equation*}
and set $\td{\chi}(t,x) := \chi(t,x,x)$.
In view of \eqref{e.to.contradict}, we can choose $\ep > 0$ sufficiently small that 
\begin{equation}
\label{e.to.contradict.2}
\sup_{\Ll[0,T\Rr) \times \bar U_K} \Ll( u-v-\td{\chi} \Rr) > \sup_{\{0\} \times \bar U_K} \Ll( u-v-\td{\chi} \Rr).
\end{equation}
For later purposes, we also impose that $2 \ep |\vec{\imath}| \le \de$. 
We now introduce, for every $\al \ge 1$, $t \in [0,T)$, $t' \in [0,T]$, and $x,x' \in \bar U_K$, the function
\begin{equation*}  
\Psi_\al(t,x,t',x') := u(t,x) - v(t',x')  - \frac \al 2 \Ll( |t-t'|^2 + |x-x'|^2 \Rr) - \chi(t,x,x').
\end{equation*}
By the definitions of $M$, $\chi$, \eqref{e.lower.phi}, and the fact that the functions $u$ and $v$ are uniformly Lipschitz, we see that the supremum of $\Psi_\al$ is achieved, at a point which we denote by $(t_\al,x_\al,t'_\al,x'_\al)$. We also see that this maximizing quadruple stays in a bounded region as~$\al$ tends to infinity, and thus that the quantity 
\begin{equation}  
\label{e.remain.bdd}
\al \Ll( |t_\al-t'_\al|^2 + |x_\al-x'_\al|^2 \Rr)
\end{equation}
must remain bounded as $\al$ tends to infinity. We infer that, up to the extraction of a subsequence, there exist $t_0 \in [0,T)$ and $x_0 \in \bar U_K$ such that, as $\al$ tends to infinity, we have $t_\al \to t_0$, $t'_\al \to t_0$, $x_\al \to x_0$, $x'_\al \to x_0$. Since
\begin{equation*}  
\Psi_\alpha(t_\al,x_\al,t_\al',x_\al') \le u(t_\alpha,x_\al) - v(t'_\al,x'_\alpha) - \chi(t,x_\al,x'_\al),
\end{equation*}
and
\begin{equation*}  
\Psi_\al(t_\al,x_\al,t_\al',x_\al') \ge \sup_{\Ll[0,T\Rr)\times \bar U_K} (u-v-\td{\chi}) \ge (u - v-\td{\chi})(t_0,x_0),
\end{equation*}
we deduce, by continuity of $u$, $v$, and $\td{\chi}$, that 
\begin{equation*}  
(u-v-\td{\chi})(t_0,x_0) = \sup_{ \Ll[ 0,T \Rr) \times \bar U_K} (u-v-\td{\chi}).
\end{equation*}
In particular, by \eqref{e.to.contradict.2}, we must have $t_0 > 0$, and thus $t_\al > 0$ and $t'_\al > 0$ for every $\al$ sufficiently large. By construction, the function
\begin{equation}  
\label{e.map.xalpha}
(t,x) \mapsto u(t,x) - v(t'_\al,x'_\al) - \frac \al {2} \Ll( |t-t'_\al|^2 + |x-x'_\al|^2 \Rr) - \chi(t,x,x'_\al)
\end{equation}
reaches its maximum at $(t_\al,x_\al)$. Since $u$ is a subsolution, at least one of the following two statements hold:
\begin{equation}
\label{e.constraint.from.u}
\al (t_\al - t'_\al)  + C_0 \ep + \dr_t \Phi(t_\al,x_\al)- \msf H \Ll(  \al  (x_\al - x'_\al) + \nabla_x \chi(t_\al,x_\al,x'_\al)\Rr) \le 0,
\end{equation}
\begin{equation}
\label{e.constraint.from.u.2}
x_\al \in \dr U_K \quad \text{ and } \quad \inf_{\nu \in \n(x_\al)} \Ll(  \al  (x_\al - x'_\al) + \nabla_x \chi(t_\al,x_\al,x'_\al)\Rr) \cdot \nu \le 0.
\end{equation}
In \eqref{e.constraint.from.u}, we dropped an additional term of $\frac{\ep}{(T-t)^2}$ on the left side; this is possible since this term is nonnegative. Notice also that we somewhat reorganized the set of two conditions in \eqref{e.interior.cond.subsol}-\eqref{e.bdy.cond.subsol}, so that we also allow for the possibility that $x_\al \in \dr U_K$ in \eqref{e.constraint.from.u}. 
We now argue that \eqref{e.constraint.from.u.2} cannot hold.
By definition of $\n(x_\al)$, for every $\nu \in \n(x_\al)$, we have $\nu \cdot (x_\al - x'_\al) \ge 0$. We observe that 
\begin{equation}  
\label{e.decomp.gradx}
\nabla_x \chi = \nabla \Phi - \ep \vec{\imath},
\end{equation}
where we have set
\begin{equation}  
\label{e.def.vecimath}
\vec{\imath} := \begin{pmatrix} \mathrm{Id} \\ 2 \, \mathrm{Id} \\ \vdots \\ K\, \mathrm{Id} \end{pmatrix} \in (S^D)^K.
\end{equation}
Moreover, $\nabla \Phi(t,x)$ is a vector proportional to $x$. We now see that, for every $x \in \dr U_K$ and $\nu \in \n(x)$, we have $x \cdot \nu = 0$. Indeed, since $\bar U_K$ is a cone, we have that $\lambda x \in \bar U_K$ for every $\lambda \ge 0$. In particular, we must have that $(\lambda x - x)\cdot \nu \le 0$ for every $\lambda \ge 0$. This can only happen if $x \cdot \nu = 0$. 
Finally, we show that there exists a constant $c > 0$ such that for every $x \in \dr U_K$ and $\nu \in \n(x)$, we have
\begin{equation}
\label{e.lowerbound.dot}
-\vec{\imath} \cdot \nu \ge c .
\end{equation}
By \eqref{e.discr.ibp}, we have
\begin{equation*}  
\vec{\imath} \cdot \nu = \sum_{k = 1}^K \mathrm{Id} \cdot \sum_{\ell = k}^K \nu_\ell.
\end{equation*}
Notice also that $-\nu \in \bar U_K^*$. By Lemma~\ref{l.dual.cone}, each matrix $-\sum_{\ell = k}^K \nu_\ell$ therefore belongs to $S^D_+$. Moreover, for every $a \in S^D_+$, we have
\begin{equation*}  
\mathrm{Id} \cdot a \ge |a|,
\end{equation*}
(the left side is the $\ell^1$ norm of the eigenvalues of $a$, the right side the $\ell^2$ norm),
and thus
\begin{equation*}  
-\vec{\imath} \cdot \nu \ge  \sum_{k = 1}^K \Ll|\sum_{\ell = k}^K \nu_\ell \Rr|.
\end{equation*}
The right side of the inequality above, as a function of $\nu$, defines a norm on $(S^D)^K$. Using the equivalence of norms and that $|\nu| = 1$, we conclude that \eqref{e.lowerbound.dot} holds. Combining the preceding observations, we conclude that \eqref{e.constraint.from.u.2} cannot be valid, so \eqref{e.constraint.from.u} holds instead.

Similarly, since the function
\begin{equation*}  
(t',x') \mapsto v(t',x') - u(t_\al,x_\al) + \frac \al {2} \Ll( |t'-t_\al|^2 + |x'-x_\al|^2 \Rr) + \chi(t_\al,x_\al,x')
\end{equation*}
has a local minimum at $(t'_\al,x'_\al)$, and since $v$ is a supersolution, at least one of the following two statements must be valid:
\begin{equation}
\label{e.constraint.from.v}
\al (t_\al - t'_\al)  - \msf H \Ll(  \al  (x_\al - x'_\al) + \ep \vec{\imath}\Rr) \ge 0,
\end{equation}
\begin{equation}
\label{e.constraint.from.v.2}
x_\al' \in \dr U_K \quad \text{ and } \quad \sup_{\nu \in \n(x_\al')} \Ll(  \al  (x_\al - x'_\al) + \ep \vec{\imath}\Rr) \cdot \nu \ge 0.
\end{equation}
In view of \eqref{e.lowerbound.dot}, we see that \eqref{e.constraint.from.v.2} cannot hold, and therefore~\eqref{e.constraint.from.v} is valid. 

We now show that \eqref{e.constraint.from.u} and \eqref{e.constraint.from.v} cannot hold simultaneously, thereby reaching the desired contradiction. We temporarily admit that the vectors
\begin{equation}  
\label{e.two.vectors}
\al  (x_\al - x'_\al) + \nabla_x \chi(t_\al,x_\al,x'_\al) \quad \text{ and } \quad  \al  (x_\al - x'_\al) + \ep \vec{\imath}
\end{equation}
are both of norm smaller than $L+\delta$. Admitting this, we use \eqref{e.decomp.gradx}, the Lipschitz property of $\msf H$, the fact that $C_0 = 2 V |\vec{\imath}|+1$, and \eqref{e.comp.drt.drx.phi}, to deduce that \eqref{e.constraint.from.u} implies
\begin{equation*}  
\al (t_\al - t'_\al)  + \ep - \msf H \Ll(  \al  (x_\al - x'_\al) + \ep \vec{\imath}\Rr) \le 0,
\end{equation*}
in contradiction with \eqref{e.constraint.from.v}.

There remains to verify that the vectors in \eqref{e.two.vectors} are bounded by $L+\delta$. For convenience, we rewrite the mapping in \eqref{e.map.xalpha} as
\begin{equation}
\label{e.map.xalpha.2}
(t,x) \mapsto u(t,x) - \psi(t,x).
\end{equation}
We recall that this mapping achieves its maximum at $(t_\al,x_\al)$, and we aim to show that $|\nabla \psi(t_\al,x_\al)| \le L+\de$. Since $u$ is $L$-Lipschitz, we have, for every $y \in \bar U_K$,
\begin{equation*}  
\psi(t_\al,y) - \psi(t_\al,x_\al) \ge u(t_\al,y) - u(t_\al,x_\al) \ge -L|y-x_\al|. 
\end{equation*}
If $x_\al \in U_K$, the desired conclusion follows. Otherwise, we can only infer that, for every $v$ in the set 
\begin{equation*}  
\mfk C := \Ll\{ \lambda (y-x_\al) \ : \ \lambda \in [0,\infty), \ y \in  \bar U_K \Rr\} ,
\end{equation*}
we have 
\begin{equation*}  
v \cdot \nabla \psi(t_\al,x_\al) \ge -L|v|.
\end{equation*}
We now recall that 
\begin{equation*}  
\nabla \psi(t_\al,x_\al) = \al (x_\al - x'_\al) + \nabla \Phi(t_\al,x_\al) - \ep \vec{\imath}.
\end{equation*}
Moreover, we have that $\nabla \Phi(t_\al,x_\al)$ is proportional to $x_\al$, say $\nabla \Phi(t_\al,x_\al) = \beta x_\al$, for some $\beta \ge 0$. Since
\begin{equation*}  
\al (x_\al' - x_\al) - \beta x_\al = (\al+\beta) \Ll( \frac{\al}{\al+\beta} x_\al' - x_\al \Rr) \in \mfk C,
\end{equation*}
we deduce that 
\begin{equation*}  
(\nabla \psi(t_\al,x_\al)+\ep \vec{\imath}) \cdot \nabla \psi(t_\al,x_\al) \le L |\nabla \psi(t_\al,x_\al)+\ep \vec{\imath}|.
\end{equation*}
This yields
\begin{equation*}  
|\nabla \psi(t_\al,x_\al)+\ep \vec{\imath}| \le L + \ep |\vec{\imath}|,
\end{equation*}
and thus
\begin{equation*}  
|\nabla \psi(t_\al,x_\al)| \le L + 2 \ep |\vec{\imath}|.
\end{equation*}
This is the desired result, since we have chosen $\ep > 0$ sufficiently small that $2 \ep |\vec{\imath}| \le \de$. The argument for the second vector in \eqref{e.two.vectors} is similar.
\end{proof}
We next provide with the following result on existence of solutions. 
\begin{proposition}[Existence of solutions]
\label{p.existence}
For every uniformly Lipschitz initial condition $f_0 : U_K \to \R$, there exists a viscosity solution $f$ to \eqref{e.hj.finite} that satisfies $f(0,\cdot) = f_0$. Moreover, the function $f$ is Lipschitz continuous, and we have 
\begin{equation}  
\label{e.lip.hj.finite}
\| \, |\nabla f| \, \|_{L^\infty(\R_+\times U_K)} = \| \, |\nabla f_0| \, \|_{L^\infty(U_K)} .
\end{equation}
\end{proposition}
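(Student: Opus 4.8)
The plan is to construct $f$ by Perron's method and then invoke the comparison principle of Proposition~\ref{p.comp} to obtain continuity, uniqueness, and the gradient bound. The first step is to build two barriers. Let $L_0 := \|\,|\nabla f_0|\,\|_{L^\infty(U_K)}$, extend $f_0$ to an $L_0$-Lipschitz function $\bar f_0$ on $\bar U_K$ (for instance $\bar f_0(x) := \inf_{y\in U_K}(f_0(y)+L_0|x-y|)$), and set $C_0 := \sup\{|\msf H(p)| : p\in\mcl E^K,\ |p|\le L_0\}$, which is finite since $\msf H$ is locally Lipschitz. I would then check that $g^{+}(t,x):=\bar f_0(x)+C_0 t$ is a viscosity supersolution of \eqref{e.hj.finite} and $g^{-}(t,x):=\bar f_0(x)-C_0 t$ a viscosity subsolution, both with initial value $f_0$. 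The interior inequality is immediate from the choice of $C_0$, once one notes that a smooth function touching $g^{\pm}$ from the appropriate side at an interior point has spatial gradient of norm at most $L_0$. The boundary inequalities \eqref{e.bdy.cond.subsol}--\eqref{e.bdy.cond.supersol} at a point $x_0\in\dr U_K$ are where the geometry of the domain enters: writing $p$ for the spatial gradient of the test function at $x_0$, touching $g^{+}$ from below along the admissible directions $v$ (those with $x_0+sv\in\bar U_K$ for small $s>0$) gives $p\cdot v\le L_0|v|$, so a Moreau decomposition of $p$ along the closed convex cone of admissible directions at $x_0$ and its polar (whose unit vectors are, by definition, exactly $\n(x_0)$) shows that either $|p|\le L_0$, whence $\dr_t\phi-\msf H(\nabla\phi)\ge 0$, or the polar component of $p$ is nonzero, whence $\sup_{\nu\in\n(x_0)}\nabla\phi\cdot\nu>0$; in either case \eqref{e.bdy.cond.supersol} holds. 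The argument for $g^{-}$ is symmetric.

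With the barriers in hand, fix a finite horizon $T$ and set
\[
f(t,x):=\sup\Big\{w(t,x)\ :\ w\ \text{a subsolution of \eqref{e.hj.finite} on }[0,T)\times\bar U_K,\ g^{-}\le w\le g^{+}\Big\},
\]
and run the classical Perron argument: the upper semicontinuous envelope of $f$ is a subsolution, the lower semicontinuous envelope $f_*$ is a supersolution (the usual ``bump'' perturbation at a would-be bad point is admissible because there $f_*<g^{+}$, $g^{+}$ being a supersolution), and both envelopes agree with $f_0$ at $t=0$ because the barriers do. The comparison principle, Proposition~\ref{p.comp}, then forces the two envelopes to coincide, so $f$ is continuous and a viscosity solution with $f(0,\cdot)=f_0$; since Proposition~\ref{p.comp} is stated for competitors that are Lipschitz in $x$, it is applied here after a standard sup-/inf-convolution regularization of the envelopes (equivalently, one may build the Lipschitz control directly into the Perron class). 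Exhausting $\R_+$ by finite horizons and gluing via uniqueness yields a solution on $\R_+\times U_K$. For the identity \eqref{e.lip.hj.finite}, the bound ``$\le$'' is obtained by a doubling-of-variables argument of exactly the type used in the proof of Proposition~\ref{p.comp}, applied to $f$ against itself, which propagates the $L_0$-Lipschitz regularity in $x$ from $t=0$ to all $t\ge 0$; the reverse bound ``$\ge$'' is trivial by restricting $\nabla f$ to $t=0$. Lipschitz continuity in $t$, with constant $C_0$, then follows by comparing the solution $(t,x)\mapsto f(t+h,x)$ with $f\pm C_0 h$.

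The step I expect to be the real obstacle — and the reason the cited literature does not apply directly — is the non-smoothness of $\bar U_K$. At a corner of $\dr U_K$ the normal ``vector'' $\n(x_0)$ is a whole cone, and the multimap $x_0\mapsto\n(x_0)$ is only outer semicontinuous; consequently both the verification that $g^{\pm}$ satisfy the viscosity boundary conditions at corners and the pasting step in Perron's method (gluing a strictly-subsolution bump into $f$ across $\dr U_K$ near a corner) must be carried out by hand. In both places the explicit description of the dual cone of $\bar U_K$ from Lemma~\ref{l.dual.cone} supplies precisely what is needed, along the same lines as in the proof of Proposition~\ref{p.comp}. Apart from this point, the argument is a routine instance of Perron's method combined with a comparison principle.
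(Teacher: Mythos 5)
Your proposal matches the paper's approach: existence via Perron's method (which the paper defers to the literature, and you sketch with barriers $g^\pm = \bar f_0 \pm C_0 t$), and the Lipschitz identity \eqref{e.lip.hj.finite} via a doubling-of-variables argument mirroring that of Proposition~\ref{p.comp}, with the corners of $\dr U_K$ handled through the dual-cone description of Lemma~\ref{l.dual.cone} in both places. The only bookkeeping the paper does that you leave implicit is an initial reduction to a globally Lipschitz truncation of $\msf H$ and to bounded initial data — the latter makes the maximizers in the doubling argument exist — but your alternative of building the Lipschitz control directly into the Perron class is an adequate substitute.
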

\begin{proof}[Proof of Proposition~\ref{p.existence}]
We will prove below that the proposition is valid if we assume furthermore that $\msf H$ is uniformly Lipschitz and that the initial condition is bounded. We first explain why this is sufficient. Denote the right side of \eqref{e.lip.hj.finite} by $L$. The proof of Proposition~\ref{p.comp} makes it clear that, if $u$ and $v$ are solutions to \eqref{e.hj.finite} with the same $L$-Lipschitz initial condition and with the nonlinearity $\msf H$ replaced by $\msf H_1$ and $\msf H_2$ respectively, and if $\msf H_1$ and $\msf H_2$ coincide on a ball of radius $L+1$, then $u = v$. It follows that, in order to build a solution to \eqref{e.hj.finite}, we may as well replace $\msf H$ by a globally Lipschitz nonlinearity that coincides with $\msf H$ on the ball of radius $L+1$. Finally, once this is done, we can use the property of finite speed of propagation proved in Proposition~\ref{p.comp} to remove the constraint that the initial condition is bounded.

The argument for the existence of a solution is as in \cite{guide} or \cite[Theorem~7.1]{barles.intro} (in the latter, the initial condition is not assumed continuous, but this additional assumption allows to conclude that the solution is continuous as well). For bounded initial conditions, this construction provides with bounded solutions.

We now turn to the proof of the fact that the solution thus constructed, which we denote by~$f$, is Lipschitz, and that the identity \eqref{e.lip.hj.finite} holds. Again we fix $\mcl C = S^D_{++}$ for concreteness, the case $\mcl C = (0,\infty)^D$ being only easier.  We argue by contradiction, assuming instead that 
\begin{equation*}  
\| \, |\nabla f| \, \|_{L^\infty(\R_+\times U_K)} > \| \, |\nabla f_0| \, \|_{L^\infty(U_K)} = L.
\end{equation*}
We recall that we assume here that $\msf H$ is uniformly Lipschitz; we denote its Lipschitz constant by~$V$. For a constant $R \in \R$ to be chosen, and $M = 1$, we define the function $\Phi$ as in \eqref{e.def.Phi}. We then set, for constants $\ep > 0$, and $T < \infty$ to be chosen, and every $t \in [0,T)$,
\begin{equation*}  
u_\ep(t,x) := f(t,x) - \Phi(t,x) - \frac{\ep}{T-t},
\end{equation*}
as well as, for every $t \ge 0$ and $x \in \bar U_K$,
\begin{equation*}  
v(t,x) := f(t,x) + \Phi(t,x). 
\end{equation*}
Proceeding as in the proof of Proposition~\ref{p.comp}, we verify that $u_\ep$ and $v$ are a sub- and a supersolution to \eqref{e.hj.finite} respectively. We have, for every $t \in [0,T)$ and $x \in \bar U_K$,
\begin{equation}  
\label{e.second.upper.uv}
u_\ep(t,x) - v(t,x)  = -2\Phi(t,x)  - \frac \ep{T-t}.
\end{equation} 
We then choose $\ep > 0$ sufficiently small and $T$ and $R \in \R$ sufficiently large that 
\begin{equation}  
\label{e.not-at-zero}
\sup_{\substack{0 \le t < T \\ x,x' \in \bar U_K}} \Ll(u_\ep(t,x) - v(t,x') - L|x-x'|\Rr) > \sup_{x,x' \in \bar U_K} \Ll(u_\ep(0,x) - v(0,x') - L|x-x'|\Rr).
\end{equation}
We denote by $\eta > 0$ the difference between the left side and the right side of this inequality. We also remark that, by \eqref{e.second.upper.uv},
\begin{equation}  
\label{e.remark.diag}
\sup_{0 \le t < T, x\in \bar U_K} \Ll(u_\ep(t,x) - v(t,x) \Rr) = \sup_{x,x' \in \bar U_K} \Ll(u_\ep(0,x) - v(0,x)\Rr).
\end{equation}
We now let $\delta' \le \delta \in (0,1]$ to be chosen (we will first fix $\delta$ in terms of $\eta$, and then $\delta'$ in terms of $\delta$, $\eta$ and moduli of continuity of $u_\ep$, $v$, and $\msf H$), and for every $\al \in [1,\infty)$, $t < T$, $t' \ge 0$, and $x,x' \in \bar U_K$, we consider 
\begin{equation*}  
\Psi_\al (t,x,t',x') := u_\ep(t,x) - v(t',x') - (L+\delta t)|x-x'| - \frac{\al}{2} |t_\al - t'_\al|^2 + \delta' \sum_{k = 1}^K k\, \mathrm{Id} \cdot (x_k+x'_k). 
\end{equation*}
Since we assume $f$ to be bounded, the maximum of $\Psi_\al$ is achieved at a point, which we denote by $(t_\al,x_\al,t'_\al,x'_\al)$, and this point remains in a bounded region as $\alpha$ tends to infinity (this bounded region can be chosen irrespectively of our choice of $\de$ and $\de'$ sufficiently small). Extracting a subsequence if necessary, we can further assume that $t_\al \to t_0$, $t'_\al \to t_0$, $x_\al \to x_0$, and $x'_\al \to x'_0$ (the limits of $t_\al$ and $t'_\al$ must be the same, since $|t_\al - t'_\al|^2 = O (\al^{-1})$). 
We also have that, for some constant $C < \infty$,
\begin{equation*}  
\Psi_\al(t_0,x_0,t_0,x'_0)  \ge -C\de + \sup_{\substack{0 \le t < T \\ x,x' \in \bar U_K}} \Ll(u_\ep(t,x) - v(t,x') - L|x-x'|\Rr),
\end{equation*}
while
\begin{equation*}  
\Psi_\al(0,x_0,0,x_0')  \le C \de + \sup_{x,x' \in \bar U_K} \Ll(u_\ep(0,x) - v(0,x') - L|x-x'|\Rr),
\end{equation*}
and, using \eqref{e.remark.diag},
\begin{equation*}  
\Psi_\al(t_0,x_0,t_0,x_0)  \le C \de + \sup_{x,x' \in \bar U_K} \Ll(u_\ep(0,x) - v(0,x') - L|x-x'|\Rr).
\end{equation*}
Choosing $\delta > 0$ such that $4C \delta \le \eta$, we can thus guarantee that $t_0 \neq 0$ and $x_0 \neq x_0'$. More precisely, with this choice of $\delta > 0$, and using the continuity of $u_\ep$ and $v$, we can ensure that there exists $\ga > 0$, not depending on $\delta'$, such that
$|x_0 - x'_0| \ge \ga$. As a consequence, we have $t_\al > 0$, and $|x_\al - x'_\al| \ge \gamma/2$ for every $\al$ sufficiently large.
We use again the notation $\vec{\imath}$ from \eqref{e.def.vecimath}. 
Since $u_\ep$ is a subsolution, at least one of the following statements holds:
\begin{equation}
\label{e.yet.another1}
\al (t_\al - t_\al') + \de |x_\al - x'_\al|- \H \Ll( (L+\de t_\al) \frac{x_\al - x'_\al}{|x_\al - x'_\al|} - \de' {\vec{\imath}}   \Rr) \le 0,
\end{equation}
\begin{equation}
\label{e.yet.another2}
x_\al \in \dr U_K \quad \text{ and } \quad \inf_{\nu \in \n(x_\al)} \nu \cdot \Ll(   (L+\de t_\al) \frac{x_\al - x'_\al}{|x_\al - x'_\al|} - \de' {\vec{\imath}}   \Rr) \le 0.
\end{equation}
By \eqref{e.lowerbound.dot} and the definition of $\n(x_\al)$, the statement in \eqref{e.yet.another2} cannot hold, and therefore~\eqref{e.yet.another1} is valid.
Conversely, since $v$ is a supersolution, at least one of the following statements holds:
\begin{equation}
\label{e.yet.another3}
\al (t_\al - t_\al') - \de |x_\al - x'_\al|- \H \Ll( (L+\de t_\al) \frac{x_\al - x'_\al}{|x_\al - x'_\al|} + \de' {\vec{\imath}}  \Rr) \ge 0,
\end{equation}
\begin{equation}
\label{e.yet.another4}
x'_\al \in \dr U_K \quad \text{ and } \quad \sup_{\nu \in \n(x'_\al)} \nu \cdot \Ll(   (L+\de t_\al) \frac{x_\al - x'_\al}{|x_\al - x'_\al|} + \de' {\vec{\imath}}  \Rr) \ge 0.
\end{equation}
As above, we see that \eqref{e.yet.another4} cannot hold. We thus conclude that \eqref{e.yet.another1} and \eqref{e.yet.another3} are both valid. But, since $|x_\al - x'_\al|$ is bounded away from zero by a quantity not depending on $\de'$, and since $\msf H$ is 	Lipschitz, we reach a contradiction by selecting $\de'$ sufficiently small.
\end{proof}

We now point out a convenient way to verify that certain functions satisfy the boundary condition for being a subsolution to \eqref{e.hj.finite}. The condition is a sort of monotonicity property, which we call being ``tilted'', and is inspired by Lemma~\ref{l.pos.dr_q}. 
Let $V$ be a subset of $\mcl E^K$, and $f : V \to \R$. 
We say that the function $f$ is \emph{tilted} if, for every $x,y \in V$, we have
\begin{equation*}  
y - x \in \bar U_K^*\quad \implies \quad f(x) \le f(y),
\end{equation*}
where we recall that $\bar U_K^*$ was defined in Lemma~\ref{l.dual.cone}.
We may also consider functions $f$ defined on $\R_+ \times V$ (or with $\R_+$ replaced by a subinterval); in this case, we say that the function $f$ is tilted if the function $f(t,\cdot)$ is tilted for every fixed $t \ge 0$. 
The next lemma provides with a simple characterization of being tilted for Lipschitz functions. 
\begin{lemma}[Characterization of tilted functions]
\label{l.tilted}
Let $V$ be an open subset of $\mcl E^K$, and let $f : V \to \R$ be a Lipschitz function. The function $f$ is tilted if and only if $\nabla f \in \bar U_K$ almost everywhere in $V$.
\end{lemma}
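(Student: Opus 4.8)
The plan is to prove the equivalence by dualizing the defining implication and using the fundamental theorem of calculus along segments. First suppose $f$ is tilted. Fix a point $x \in V$ at which $f$ is differentiable (Rademacher), and let $h \in \bar U_K^*$. For all small $s > 0$, $x + sh \in V$ and $(x+sh) - x = sh \in \bar U_K^*$ since $\bar U_K^*$ is a convex cone, so $f(x+sh) \ge f(x)$; dividing by $s$ and sending $s \downarrow 0$ gives $\nabla f(x) \cdot h \ge 0$. Since this holds for every $h \in \bar U_K^*$, we get $\nabla f(x) \in (\bar U_K^*)^* = \bar U_K$, where the last identity is exactly \eqref{e.double.dual.uk} of Lemma~\ref{l.dual.cone}. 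Hence $\nabla f \in \bar U_K$ almost everywhere.

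Conversely, suppose $\nabla f \in \bar U_K$ almost everywhere in $V$, and let $x, y \in V$ with $y - x \in \bar U_K^*$. If $V$ were convex we could simply write $f(y) - f(x) = \int_0^1 \nabla f(x + s(y-x)) \cdot (y-x)\, \d s$ and observe that each integrand is $\ge 0$ because $\nabla f(\cdot) \in \bar U_K$ pairs nonnegatively against $y - x \in \bar U_K^*$, by definition of the dual cone in \eqref{e.def.baruk*}. Since $\bar U_K^*$ has nonempty interior (it contains, e.g., $\vec{\imath}$ up to sign considerations, or more simply any strictly "tilted" tuple), a standard perturbation argument handles the case in which the segment $[x,y]$ is not entirely contained in $V$ or meets the null set where $\nabla f$ is undefined: one approximates $y - x$ by $y - x + \eta w$ with $w$ in the interior of $\bar U_K^*$ and $\eta \downarrow 0$, and one uses a mollification $f_\delta := f * \rho_\delta$ of $f$ on a slightly smaller open set, for which $\nabla f_\delta \in \bar U_K$ everywhere (since $\bar U_K$ is convex and closed, averaging preserves membership). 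The inequality $f_\delta(y) \le f_\delta(y + \eta w) $ passes to the limit $\delta \downarrow 0$ by uniform convergence of $f_\delta$ to $f$ on compact subsets, and then $\eta \downarrow 0$ by continuity of $f$, giving $f(x) \le f(y)$.

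The only mildly delicate point — and the one I would expect to require the most care in writing out — is the reduction to segments when $V$ is merely open rather than convex: one must either restrict attention to a convex neighborhood and then chain finitely many such inequalities along a polygonal path from $x$ to $y$ whose edges lie in directions inside the (open) dual cone, or argue that it suffices to treat $x, y$ with $[x,y] \subset V$ and deduce the general case from a connectedness/covering argument. The mollification step is routine once one notes that $\bar U_K$ is a closed convex set, so $\nabla(f * \rho_\delta) = (\nabla f) * \rho_\delta$ is a convex average of vectors in $\bar U_K$ and hence lies in $\bar U_K$. All remaining manipulations are elementary applications of Lemma~\ref{l.dual.cone}.
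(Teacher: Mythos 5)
Your proof follows the paper's skeleton. Step~1 is the same: Rademacher, directional derivative along $h \in \bar U_K^*$, then the bipolar identity~\eqref{e.double.dual.uk} from Lemma~\ref{l.dual.cone}. For the converse, both you and the paper integrate $\nabla f$ along the segment from $x$ to $y$ and pair against $y - x \in \bar U_K^*$, but you handle the fact that $\nabla f$ is only defined almost everywhere by mollification (noting that $\nabla(f \ast \rho_\delta) = (\nabla f) \ast \rho_\delta$ is an average of vectors in the closed convex set $\bar U_K$ and hence stays in $\bar U_K$), whereas the paper invokes Fubini to produce a full-measure set of pairs $(x,y)$ for which the segment meets the non-differentiability set in a null set, and then tacitly relies on continuity of $f$ to pass from those pairs to all pairs. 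The two devices are interchangeable here; your mollified version does sidestep the density-plus-continuity step that the paper leaves implicit. Your worry about non-convex $V$ is also well placed and is not addressed by the paper's proof either: the ``if'' direction genuinely needs the segment $[x,y]$, or a suitable surrogate path, to lie inside $V$, and a disconnected $V$ already gives a counterexample to the lemma as literally stated (take $f$ locally constant with a drop between components). Note, though, that your proposed polygonal-chaining fix is not quite enough, since there is no reason a polygonal path from $x$ to $y$ inside an arbitrary connected non-convex $V$ can be arranged with every edge increment in $\bar U_K^*$; the clean repair is simply to assume $V$ convex, which holds in every application of the lemma in the paper.
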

\begin{proof}
We decompose the proof into two steps.

\emph{Step 1.} We assume that $f$ is tilted, and show that $\nabla f \in \bar U_K$ almost everywhere. By Rademacher's theorem, the function $f$ is differentiable almost everywhere. Let $z \in V$ be a point of differentiability of $f$, and $x \in  \bar U_K^*$. Since $V$ is open and $f$ is tilted, we have, for every $\ep > 0$ sufficiently small,
\begin{equation*}  
f(z + \ep x) - f(z) \ge 0.
\end{equation*}
Dividing by $\ep$ and letting $\ep > 0$ tend to zero, we conclude that $x \cdot \nabla f (z) \ge 0$. By Lemma~\ref{l.dual.cone}, this means that $\nabla f(z) \in \bar U_K$. 

\emph{Step 2.} We assume that $\nabla f \in \bar U_K$ almost everywhere, and show that $f$ is tilted. By Fubini's theorem, the set 
\begin{equation*}  
\Ll\{ (x,y) \in V^2  \ : \ \Ll|\{s \in [0,1] \ : \ \mbox{$f$ is differentiable at $sy + (1-s)x$} \}\Rr|=1 \Rr\} 
\end{equation*}
has full measure (for $I \subset [0,1]$, the notation $|I|$ above denotes its Lebesgue measure). We fix a pair $(x,y)$ in this set. Since the mapping $s \mapsto f(sy + (1-s)x)$ is Lipschitz, we have
\begin{equation*}  
f(y) -f(x)= \int_0^1 (y-x) \cdot \nabla f(sy + (1-s)x) \, \d s.
\end{equation*}
The result then follows using Lemma~\ref{l.dual.cone} once more.
\end{proof}
Notice that, by Lemmas~\ref{l.pos.dr_q} and \ref{l.tilted}, the function
\begin{equation*}  
(t,q) \mapsto \td F_N \Ll( t,\Ll(\frac 1 k \sum_{\ell = 1}^k \de_{q_{a,\ell}}\Rr)_{a \in \{1,2\}} \Rr) 
\end{equation*}
is tilted. 
As announced, the next proposition states that a tilted function automatically satisfies the boundary condition \eqref{e.bdy.cond.subsol}.
\begin{proposition}[Boundary condition for subsolution]
\label{p.boundary}
Let $f \in C([0,T) \times \bar U_K)$ be a tilted function, $(t,x) \in (0,T) \times \dr U_K$, and $\phi \in C^\infty((0,T)\times \bar U_K)$ be such that $(t,x)$ is a local maximum of $f-\phi$. We have
\begin{equation*}  
\inf_{\nu \in \n(x)} \nu \cdot \nabla \phi(t,x) \le 0.
\end{equation*}
\end{proposition}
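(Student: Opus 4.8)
The plan is to combine the local-maximum property with the tilting of $f$ to obtain a one-sided derivative bound for $\phi$ along every feasible direction lying in $\bar U_K^*$, and then to convert this bound into the boundary inequality using cone duality. Write $p:=\nabla\phi(t,x)$ and set $x^\perp:=\{q\in\mcl E^K:q\cdot x=0\}$ and
\[
D:=\bar U_K^*\cap x^\perp .
\]
Since $0$ and $2x$ both lie in the cone $\bar U_K$, a unit vector $\nu$ belongs to $\n(x)$ precisely when $\nu\cdot x=0$ and $-\nu\in\bar U_K^*$; that is, $\n(x)=\{\nu:|\nu|=1,\ -\nu\in D\}$. As $x$ is a boundary point of the closed convex set $\bar U_K$ with nonempty interior $U_K$, a supporting hyperplane exists, so $D\neq\{0\}$ (and $\n(x)\neq\emptyset$). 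It therefore suffices to produce one $\mu\in D\setminus\{0\}$ with $\mu\cdot p\ge 0$: for then $\nu:=-\mu/|\mu|\in\n(x)$ satisfies $\nu\cdot\nabla\phi(t,x)\le 0$, hence $\inf_{\nu'\in\n(x)}\nu'\cdot\nabla\phi(t,x)\le 0$.

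\emph{The derivative bound.} Let $\mfk C:=\{\lambda(y-x):\lambda\ge 0,\ y\in\bar U_K\}$ be the convex cone of feasible directions at $x$, so that $x+\eps w\in\bar U_K$ for all small $\eps>0$ when $w\in\mfk C$. Fix $w\in\mfk C\cap\bar U_K^*$. For small $\eps>0$ the local-maximum property gives $f(t,x+\eps w)-f(t,x)\le\phi(t,x+\eps w)-\phi(t,x)=\eps\,p\cdot w+o(\eps)$, while the tilting of $f$, applied to the pair $x,\,x+\eps w\in\bar U_K$ whose difference $\eps w$ lies in $\bar U_K^*$, gives $f(t,x+\eps w)\ge f(t,x)$. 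Dividing by $\eps$ and letting $\eps\downarrow 0$ yields $p\cdot w\ge 0$. Thus $p$ lies in the dual cone $(\mfk C\cap\bar U_K^*)^*$.

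\emph{Identifying the dual cone.} I would use the standard facts that $A^{**}=\overline A$ for a convex cone $A$, $(A+B)^*=A^*\cap B^*$, and $(A\cap B)^*=\overline{A^*+B^*}$ for closed convex cones. Testing against $y=0$ and $y=2x$ as above shows $\mfk C^*=\bar U_K^*\cap x^\perp=D$, and $(\bar U_K^*)^*=\bar U_K$ by Lemma~\ref{l.dual.cone}. Since $\mfk C$ is not closed, I first check $\overline{\mfk C}\cap\bar U_K^*=\overline{\mfk C\cap\bar U_K^*}$: because $\bar U_K\subseteq\bar U_K^*$ (from $0\le x_1\le\cdots\le x_K$, or its matrix analogue), the nonempty open set $U_K\subseteq\bar U_K^*$ meets the interior of $\bar U_K^*$ at some point $z_*$, and then for $w\in\overline{\mfk C}\cap\bar U_K^*$ the segment $(1-s)w+sz_*$, $s\in(0,1]$, lies in $\mfk C\cap\bar U_K^*$ and tends to $w$. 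Hence $(\mfk C\cap\bar U_K^*)^*=(\overline{\mfk C}\cap\bar U_K^*)^*=\overline{D+\bar U_K}$. Finally $D+\bar U_K$ is already closed: if $\mu_n+z_n\to q$ with $\mu_n\in D$, $z_n\in\bar U_K$, then $(\mu_n)$ is bounded — otherwise, rescaling, $\mu_n/|\mu_n|\to\hat\mu\in D$ with $|\hat\mu|=1$ and $-\hat\mu\in\bar U_K$, forcing $0\le\hat\mu\cdot(-\hat\mu)=-1$ — so along a subsequence $\mu_n\to\mu\in D$ and $z_n\to q-\mu\in\bar U_K$. Therefore $p\in D+\bar U_K$.

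\emph{Conclusion, and the main difficulty.} Write $p=\mu_0+z_0$ with $\mu_0\in D$ and $z_0\in\bar U_K$. If $\mu_0\neq 0$, then $\mu_0\cdot p=|\mu_0|^2+\mu_0\cdot z_0\ge|\mu_0|^2>0$, using that $\mu_0\in D\subseteq\bar U_K^*$ pairs nonnegatively with $z_0\in\bar U_K$; take $\mu=\mu_0$. If $\mu_0=0$, then $p=z_0\in\bar U_K$ and any nonzero $\mu\in D$ works, since $\mu\cdot p=\mu\cdot z_0\ge 0$. Either way $\mu\in D\setminus\{0\}$ with $\mu\cdot p\ge 0$, completing the argument. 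The main obstacle is the middle step: it is tempting to use only the feasible directions lying in $\bar U_K$, which gives merely $p\in\bar U_K^*$, but this is genuinely too weak (already when $D\ge 2$ and $K=3$, where $D+\bar U_K$ is strictly smaller than $\bar U_K^*$), so one must exploit all feasible directions that happen to land in $\bar U_K^*$, and this forces the density and closedness bookkeeping above. Everything else is routine convex analysis together with the definitions of tilting and of viscosity subsolutions.
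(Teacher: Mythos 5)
Your proof is correct and takes a genuinely different route from the paper's. The paper isolates a self-contained convex-geometry lemma (its Step~2): any closed convex cone $\mfk C$ with nonempty interior satisfies $\msf{int}(\mfk C)\cap(-\mfk C')\neq\emptyset$. Applied to the closure of the feasible cone at $x$, this produces a \emph{single} vector $v$ pointing strictly into $U_K$ and lying in $D:=\bar U_K^*\cap\{q:q\cdot x=0\}\subseteq\bar U_K^*$, so the tilting/local-maximum calculation is carried out along the one direction $v$, whose normalized negative lies in $\n(x)$. You instead run the tilting argument along \emph{every} $w\in\mfk C\cap\bar U_K^*$, place $p:=\nabla\phi(t,x)$ in the polar cone $(\mfk C\cap\bar U_K^*)^*$, compute that polar cone explicitly as $D+\bar U_K$, and extract the required normal from the decomposition $p=\mu_0+z_0$. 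Your route yields a sharper, explicit description of where $\nabla\phi(t,x)$ must live (the inclusion $p\in D+\bar U_K$ is never stated in the paper), at the price of the density and closedness bookkeeping in the polar-cone computation; the paper's separation lemma is $p$-independent and applies to arbitrary cones with nonempty interior, and in particular does not rely on the structural inclusion $\bar U_K\subseteq\bar U_K^*$ that your density step uses. One small point you should spell out: the open-segment argument also needs $z_*\in\msf{int}(\overline{\mfk C})$, and this holds because $x+z_*\in U_K$ (each successive difference of $x+z_*$ is a nonnegative increment from $x\in\bar U_K$ plus a strictly positive one from $z_*\in U_K$, hence lies in $\mcl C$), so $z_*$ is an interior point of $\overline{\mfk C}$ and of $\bar U_K^*$ simultaneously, which is what makes the segment $(1-s)w+sz_*$ land in the interiors of both sets for $s\in(0,1]$.
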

\begin{proof}
We decompose the proof into three steps.

\emph{Step 1.} In this step, we prove a general (and classical) statement concerning the bidual of a closed convex cone.
Let $\mfk C$ be a closed convex cone, which for simplicity we assume to be in some Euclidean space $E$. Let $\mfk C'$ be, up to a sign, the cone dual to $\mfk C$:
\begin{equation}  
\label{e.def.C'}
\mfk C' := \Ll\{ y \in E \ : \ \forall x \in \mfk C, \ x \cdot y \le 0 \Rr\} ,
\end{equation}
and let
\begin{equation}  
\label{e.def.C''}
\mfk C'' := \Ll\{ x \in E \ : \ \forall y \in \mfk C', \ x \cdot y \le 0 \Rr\} .
\end{equation}
In this step, we show that $\mfk C'' = \mfk C$.
Let $f : E \to E$ be such that $f = 0$ on $\mfk C$ and $f = +\infty$ otherwise. Its convex dual $f^*$ is such that, for every $y \in E$,
\begin{equation*}  
f^*(y) = \sup_{x \in E} \Ll(x \cdot y - f(x)\Rr) = \sup_{x \in \mfk C} x \cdot y = 
\Ll|
\begin{array}{rcl}  
0  & \text{if } y \in \mfk C', \\
+\infty & \text{otherwise}.
\end{array}
\Rr.
\end{equation*}
In the same way, using that $\mfk C'$ is a cone, we see that the bidual $f^{**}$ is such that $f^{**} = 0$ on $\mfk C''$, and $f^{**} = +\infty$ otherwise. Since $f$ is convex and lower semicontinuous, it is equal to its bidual. This shows that $\mfk C'' = \mfk C$. 

\emph{Step 2.} We now prove another general (and possibly less classical) statement about closed convex cones. Let $E$ be some Euclidean space, and for any $A \subset E$, let $\msf{int}(A)$ and $\msf{conv}(A)$ denote the interior and the convex hull of $A$ respectively. Let $\mfk C \subset E$ be a closed convex cone, and $\mfk C'$ be as in \eqref{e.def.C'}. Our aim is to show that if the interior of $\mfk C$ is not empty, then 
\begin{equation}  
\label{e.separating}
\msf{int}(\mfk C) \cap (-\mfk C') \neq \emptyset,
\end{equation}
where we write $-\mfk C' := \Ll\{ -z  \ : \ z \in \mfk C' \Rr\}$.
Without loss of generality, we may assume that $\mfk C' \neq \{0\}$ (otherwise we have $\mfk C = E$, by the result of the previous step, and $0$ belongs to the set on the left side of \eqref{e.separating}).
We first show that 
\begin{equation}
\label{e.no.flat.piece}
\mfk C' \cap (-\mfk C') = \{0\}. 
\end{equation}
Indeed, if $z \in \mfk C' \cap (-\mfk C')$, then by the result of the previous step, we must have that $y \cdot z = 0$ for every $y \in \mfk C$. Since we assume that the interior of $\mfk C$ is not empty, this is only possible if $z = 0$. Let
\begin{equation*}  
\mathbf{m} := \{z \in \mfk C' \ : \ |z| = 1\}.
\end{equation*}
We now show that
\begin{equation}
\label{e.no.zero}
0 \notin \msf{conv}(\mathbf m). 
\end{equation}
Assume instead that $0 \in \msf{conv}(\mathbf{m})$. By Carath\'eodory's theorem, the point $0$ can then be represented as the barycenter of a finite number of points in $\mathbf{m}$. Since $\mathbf{m} \subset \mfk C'$ and $\mfk C'$ is convex and contains the origin, this allows us to contradict \eqref{e.no.flat.piece}. Using Carath\'eodory's theorem once more, we can also verify that $\msf{conv}(\mathbf m)$ is compact.

For every $\ep > 0$, we define
\begin{equation*}  
\mathbf{m}_\ep := \Ll\{ z \in E \ : \ \msf{dist}(z,\msf{conv}(\mathbf{m})) \le \ep\Rr\},
\end{equation*}
and aim to show that there exists $\ep > 0$ such that
\begin{equation}  
\label{e.disjoint}
\mfk C \cap \{\lambda \nu \ : \ \lambda > 0, \ \nu \in \mathbf m_\ep\} = \emptyset.
\end{equation}
Assume the contrary: for every $\ep > 0$, we could then find $\lambda_\ep > 0$ and $\nu_\ep \in \mathbf m_\ep$ such that $\lambda_\ep \nu_\ep \in \mfk C$. Since $\mfk C$ is a cone, the latter condition means that $\nu_\ep \in \mfk C$. Since the sets $(\mathbf m_\ep)_{\ep > 0}$ are compact and nested, and since $\mfk C$ is closed, we can find a limit point $\nu \in \mathbf m_0 = \msf{conv}(\mathbf m)$ such that $\nu \in \mfk C$. We have in particular that $\nu \in \mfk C'$, but by \eqref{e.def.C'}, we have $\mfk C \cap \mfk C' = \{0\}$. This implies that $\nu = 0$. But since $\nu \in \msf{conv}(\mathbf{m})$, this contradicts \eqref{e.no.zero}. 

Notice next that the set $\{\lambda \nu \ : \ \lambda > 0, \ \nu \in \mathbf m_\ep\}$ is convex: indeed, for every $\lambda, \lambda' > 0$, $\nu,\nu' \in \mathbf{m_\ep}$, and $\al \in (0,1)$, we have
\begin{equation*}  
\alpha \lambda \nu + (1-\alpha) \lambda' \nu' = (\alpha \lambda + (1-\alpha)\lambda')\Ll(\frac{\alpha \lambda}{\alpha \lambda + (1-\alpha)\lambda'} \nu + \frac{(1-\alpha) \lambda'}{\alpha \lambda + (1-\alpha)\lambda'} \nu' \Rr),
\end{equation*}
and the quantity between parentheses on the right side belongs to $\mathbf{m}_\ep$, since this set is convex. Since $\mfk C$ is also convex, we can find a hyperplane that separates the two disjoint sets appearing in \eqref{e.disjoint}: there exists $v \in E$, which we may assume to be of unit norm, such that
\begin{equation*}  
\forall z \in \mfk C,  \ z \cdot v \ge 0 \quad \text{ and } \quad \forall z \in \mathbf m_\ep,  \ v \cdot z \le 0.
\end{equation*}
The first property in the previous display yields that $-v \in \mfk C'$. We will now see that $v \in \msf{int}(\mfk C)$, which will complete the proof of \eqref{e.separating}. For every $v' \in E$ satisfies $|v'-v| \le \ep$ and $z \in \mathbf m$, we have
\begin{equation*}  
v' \cdot z \le v \cdot z + \ep = v \cdot \Ll( z + \ep v \Rr)\le 0,
\end{equation*}
where we used that $z + \ep v \in \mathbf m_\ep$ in the last inequality. Using the notation in \eqref{e.def.C''}, this shows that every such $v'$ belongs to $\mfk C''$. By the result of the previous step, we have $\mfk C'' = \mfk C$, and we have thus verified that $v \in \msf{int}(\mfk C)$.

\emph{Step 3.}
We fix $(t,x)$, $\phi$ as in the statement of Proposition~\ref{p.boundary}, let
\begin{equation}
\label{e.def.C.explicit}
\mfk C_0 := \Ll\{ \lambda (y-x) \ : \ \lambda > 0, \ y \in  U_K \Rr\} ,
\end{equation}
and let $\mfk C$ denote its closure. Since $\mfk C_0$ is open, we have $\msf{int}(\mfk C) = \mfk C_0$.
For every $\lambda, \lambda' \in (0,\infty)$ and $y,y' \in U_K$, we have
\begin{equation*}  
\lambda (y-x) + \lambda'(y'-x) = (\lambda + \lambda') \Ll( \frac{\lambda}{\lambda + \lambda'} y + \frac{\lambda'}{\lambda + \lambda'} y'-x\Rr)  \in \mfk C_0.
\end{equation*}
It follows that $\mfk C_0$ is convex, and thus that $\mfk C$ is a closed convex cone.  
Let $\mfk C'$ be defined by \eqref{e.def.C'}, with $E = \mcl E^K$. Since $\mfk C$ has nonempty interior, we can apply the result of the previous step to infer that
\begin{equation*}  
\mfk C_0 \cap (-\mfk C') \neq \emptyset. 
\end{equation*}
Let $v$ denote an element of this set; without loss of generality, we may assume that $v$ is of unit norm. By definition of $\mfk C_0$, there exists $\lambda > 0$ and $y \in U_K$ such that $v = \lambda(y-x)$. Since $-v \in \mfk C'$ and is of unit norm, we also have that $-v \in \n(x)$. Since the set $\mfk C$ contains $\bar U_K$, we have that $-\mfk C' \subset \bar U_K^*$, and thus $v \in \bar U_K^*$. By convexity of $\bar U_K$, for every $\ep \in [0,\lambda^{-1}]$, we have that 
\begin{equation*}  
x+ \ep v = (1-\ep \lambda) x + \ep \lambda y \in \bar U_K.
\end{equation*}
By the assumption that $(t,x)$ is a local maximum of $f-\phi$, for every $\ep > 0$ sufficiently small, we have 
\begin{equation*}  
(f-\phi)(t,x+\ep v) \le (f-\phi)(t,x).
\end{equation*}
Since $f$ is tilted and $v \in \bar U_K^*$, we deduce that $\phi(t,x+\ep v)- \phi(t,x) \ge 0$. Dividing by $\ep > 0$ and letting it tend to zero, we obtain that
\begin{equation*}  
v \cdot \nabla \phi(t,x) \ge 0.
\end{equation*}
Since $-v \in \n(x)$, this is the desired result.
\end{proof}

\subsection{Convergence of finite-dimensional approximations}
We now turn to the identification of the solution to \eqref{e.hj}, which we define to be the limit of the solutions to suitable finite-dimensional approximations. 
From now on, we specialize the results of the previous subsection to the case of 
\begin{equation*}  
\mcl C := (0,\infty)^2.
\end{equation*}
We aim to approximate each measure in a given pair $(\mu_1,\mu_2) \in (\mcl P(\R_+))^2$ by a measure of the form
\begin{equation}  
\label{e.map.measure}
\frac 1 K \sum_{k = 1}^K \de_{x_{k,a}}, \qquad (a \in \{1,2\}),
\end{equation}
for some (ultimately large) integer $K$ and some $x \in \bar U_K$, where we set $x_{k} = (x_{k,1}, x_{k,2})$, and $x = (x_1,\ldots, x_K)$. 
We can clearly map any element of $\bar U_K$ to a pair of probability measures in $(\mcl P(\R_+))^2$ through the mapping defined in \eqref{e.map.measure}. We can also define a converse operation, from a given pair of measures in $(\mcl P(\R_+))^2$ to an element of $\bar U_K$. Fixing $K \ge 1$, we set, for every $\mu = (\mu_1,\mu_2) \in (\mcl P(\R_+))^2$ and $k \in \{1,\ldots, K\}$,
\begin{equation}
\label{e.def.mu.to.xk}
x^{(K)}_k(\mu)  = (x^{(K)}_{k,1}(\mu), x^{(K)}_{k,2}(\mu)) := K \int_{\frac{k-1}{K}}^{\frac k K} (F_{\mu_1}^{-1}(u), F_{\mu_2}^{-1}(u)) \, \d u \in \R_+^2,
\end{equation}
where we recall that the functions $F_{\mu_a}^{-1}$ were introduced in \eqref{e.def.F-1}. 
This defines a mapping $\mu \mapsto x^{(K)}(\mu)$ from $(\mcl P(\R_+))^2$ to $\bar U_K$. Notice that if we map an element $x$ of $\bar U_K$ to a pair of measures according to \eqref{e.map.measure}, and then back into an element of~$\bar U_K$ through the mapping above, we recover $x$ (but obviously, some information is lost when we go from a pair of measures to an element of $\bar U_K$ and then back). We also use the notation
\begin{equation*}  
\mu^{(K)} = (\mu^{(K)}_1, \mu^{(K)}_2) := \Ll(\frac 1 K \sum_{k = 1}^K \de_{x^{(K)}_{k,a}(\mu)} \Rr)_{a \in \{1,2\}}.
\end{equation*}
The mapping $\mu \mapsto \mu^{(K)}$ thus takes an element $\mu$ of $(\mcl P(\R_+))^2$, and returns a pair in $(\mcl P(\R_+))^2$, made of two measures with $K$ atoms of equal masses (the latter is in some sense the ``representative'' of $x^{(K)}(\mu) \in \bar U_K$ within the set $(\mcl P(\R_+))^2$).  

The following proposition is the main result of this subsection. 
\begin{proposition}[Convergence of finite-dimensional approximations]
\label{p.conv.finite.dim}
Let $\psi$ be the function defined in \eqref{e.def.psi}, and for each integer $K \ge 1$, let $f^{(K)} : \R_+ \times \bar U_K \to \R$ be the viscosity solution to
\begin{equation}
\label{e.def.finite.dim}
\Ll\{
\begin{aligned}  
& \dr_t f^{(K)} - K \sum_{k = 1}^K  \dr_{x_{k,1}} f^{(K)} \, \dr_{x_{k,2}} f^{(K)} = 0 & \quad \text{on } (0,\infty) \times \bar U_K,
\\
& \n \cdot \nabla f^{(K)} = 0 & \quad \text{on } (0,\infty) \times \dr U_K,
\end{aligned}
\Rr.
\end{equation}
with initial condition given, for every $x \in \bar U_K$, by
\begin{equation}
\label{e.def.finite.dim.init}
f^{(K)}(0,x) = \psi \Ll( \frac 1 K \sum_{k = 1}^K \de_{x_{k,1}},\frac 1 K \sum_{k = 1}^K \de_{x_{k,2}}  \Rr) .
\end{equation}
For every $t \ge 0$ and $\mu \in (\mcl P_2(\R_+))^2$, the following limit exists and is finite:
\begin{equation}  
\label{e.conv.finite.dim}
f(t,\mu) := \lim_{K \to \infty} f^{(K)}\Ll(t,x^{(K)}(\mu)\Rr),
\end{equation}
where on the right side, we use the notation defined in~\eqref{e.def.mu.to.xk}. By definition, we interpret this limit as the solution to \eqref{e.hj}.
Moreover, there exists a constant $C < \infty$ such that, for every integer $K \ge 1$, $t \ge 0$, and $\mu,\nu \in (\mcl P_2(\R_+))^2$, we have
\begin{equation}
\label{e.quant.conv}
\Ll| f(t,\mu) - f^{(K)}(t,x^{(K)}(\mu)) \Rr| \le \frac{C}{\sqrt{K}} \Ll( t + \Ll(\E \Ll[ X_{\mu_1}^2 + X_{\mu_2}^2 \Rr]\Rr) ^\frac 1 2   \Rr) ,
\end{equation}
as well as
\begin{equation}  
\label{e.lip.f}
\Ll| f(t,\mu) - f(t,\nu) \Rr| \le \Ll(\E \Ll[ |X_{\mu_1} - X_{\nu_1}|^2 + |X_{\mu_2} - X_{\nu_2}|^2 \Rr]\Rr)^\frac 1 2.
\end{equation}
\end{proposition}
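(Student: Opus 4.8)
The plan is to construct the solution to \eqref{e.hj} as the limit of the finite-dimensional solutions $f^{(K)}$, and to control the rate of convergence by comparing the problem at dimension $K$ with the problem at a common refinement $K'$ (i.e.\ $K\mid K'$) through the linear map $\iota=\iota_{K\to K'}\colon\bar U_K\to\bar U_{K'}$ that repeats each atom $K'/K$ times. The point of $\iota$ is that $\iota(x)$ and $x$ encode the same pair of empirical measures, so the two problems have ``the same'' initial datum along $\iota$; the crux will be to promote this to an exact identity $f^{(K)}(t,x)=f^{(K')}(t,\iota(x))$ between the solutions. Granting that, convergence and the quantitative bounds are bookkeeping involving the comparison principle and an elementary one-dimensional approximation estimate.

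First I would record the basic properties of the $f^{(K)}$'s. Existence and uniqueness follow from Propositions~\ref{p.existence} and~\ref{p.comp}, the nonlinearity $p\mapsto K\sum_k p_{k,1}p_{k,2}$ being smooth. Using the Lipschitz bound on $\psi$ from Proposition~\ref{p.continuity}, the initial datum \eqref{e.def.finite.dim.init} is $1$-Lipschitz for the transport metric $(\mu,\nu)\mapsto\sum_a\E|X_{\mu_a}-X_{\nu_a}|$ on pairs of $K$-atom equal-weight measures, and $\le\sqrt{2/K}$-Lipschitz for the Euclidean metric on $\bar U_K$; by \eqref{e.lip.hj.finite} and the equation \eqref{e.def.finite.dim} the same bounds hold for $f^{(K)}(t,\cdot)$ for all $t$, and $|\dr_t f^{(K)}|\le 2$ uniformly in $K$. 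I would also check that each $f^{(K)}(t,\cdot)$ is tilted: this holds at $t=0$ by Lemmas~\ref{l.pos.dr_q} and~\ref{l.tilted}, and it propagates by a standard invariance argument, most cleanly through the vanishing-viscosity approximation, for which the corner Neumann condition is compatible with preserving $\nabla f_\ep\in\bar U_K$.

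The heart of the proof is the identity $f^{(K)}(t,\cdot)=f^{(K')}(t,\iota(\cdot))$ on $\bar U_K$ whenever $K\mid K'$. Since $f^{(K')}\!\circ\iota$ has the same initial datum as $f^{(K)}$ and is uniformly Lipschitz, by Proposition~\ref{p.comp} it suffices to show it is a viscosity solution of \eqref{e.def.finite.dim} at dimension $K$. In the interior, $\dr_{x_{k,a}}(f^{(K')}\!\circ\iota)=\sum_{i\in I_k}\dr_{y_{i,a}}f^{(K')}$, where $I_k$ is the $k$-th block; with $m=K'/K$ and the elementary identity $m\sum_i a_ib_i-(\sum_i a_i)(\sum_i b_i)=\tfrac12\sum_{i,i'}(a_i-a_{i'})(b_i-b_{i'})$, together with the fact that the entries of $\nabla f^{(K')}$ are nondecreasing along each block (tiltedness), one obtains the supersolution inequality $\dr_t(f^{(K')}\!\circ\iota)-K\sum_k\dr_{x_{k,1}}(f^{(K')}\!\circ\iota)\,\dr_{x_{k,2}}(f^{(K')}\!\circ\iota)\ge0$. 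For the subsolution inequality one uses that $\iota(U_K)$ sits on the face of $\dr U_{K'}$ where coordinates are constant within blocks; on that face the Neumann condition forces $\nabla f^{(K')}$ to be constant within blocks, which makes the above discrepancy vanish. The boundary condition at dimension $K$ is handled via Proposition~\ref{p.boundary}, since $f^{(K')}\!\circ\iota$ is tilted (one checks $\iota(\bar U_K^*)\subseteq\bar U_{K'}^*$). I expect the careful treatment of these boundary points --- in particular making the ``$\nabla f^{(K')}$ constant within blocks'' statement rigorous for a merely Lipschitz viscosity solution, and tracking it through the $\ep\to0$ limit --- to be the main obstacle; working throughout with the smooth vanishing-viscosity approximants, where the corner Neumann conditions are classical, is the natural route.

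With the consistency in hand, the rest is quantitative. One checks $\pi_{K'\to K}(x^{(K')}(\mu))=x^{(K)}(\mu)$ and that $\iota(x^{(K)}(\mu))$ represents the measure pair $\mu^{(K)}$, so $f^{(K)}(t,x^{(K)}(\mu))=f^{(K')}(t,\iota(x^{(K)}(\mu)))$ and, by the uniform transport-Lipschitz bound,
\[
\big|f^{(K)}(t,x^{(K)}(\mu))-f^{(K')}(t,x^{(K')}(\mu))\big|\le\sum_{a}W_1(\mu_a^{(K)},\mu_a^{(K')})\le\sum_a\big(W_1(\mu_a^{(K)},\mu_a)+W_1(\mu_a,\mu_a^{(K')})\big).
\]
Here $W_1(\mu_a^{(K)},\mu_a)=\E\,\big|X_{\mu_a}-\E[X_{\mu_a}\mid\mcl F_K]\big|$, with $\mcl F_K$ the partition of $[0,1]$ into $K$ equal subintervals, and the estimate $W_1(\mu_a^{(K)},\mu_a)\le C K^{-1/2}(\E X_{\mu_a}^2)^{1/2}$ follows by truncating $X_{\mu_a}$ at height $\sim\sqrt K\,(\E X_{\mu_a}^2)^{1/2}$: the truncated part is a monotone function of total variation $\lesssim\sqrt K\,(\E X_{\mu_a}^2)^{1/2}$, contributing $\lesssim K^{-1/2}(\E X_{\mu_a}^2)^{1/2}$, while the tail contributes $\lesssim K^{-1/2}(\E X_{\mu_a}^2)^{1/2}$ by Chebyshev. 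Applying this with $K'$ a common multiple of two given indices and using the triangle inequality shows $K\mapsto f^{(K)}(t,x^{(K)}(\mu))$ is Cauchy with rate $\min(K,K')^{-1/2}$; the $t$-term in \eqref{e.quant.conv} is carried along from the comparison of the two equations using $|\dr_t f^{(K)}|\le 2$. Letting $K'\to\infty$ yields $f(t,\mu)$, its finiteness, and \eqref{e.quant.conv}. Finally \eqref{e.lip.f} is obtained by passing to the limit in $\big|f^{(K)}(t,x^{(K)}(\mu))-f^{(K)}(t,x^{(K)}(\nu))\big|\le\sqrt{2/K}\,|x^{(K)}(\mu)-x^{(K)}(\nu)|$ together with the Jensen bound $|x^{(K)}(\mu)-x^{(K)}(\nu)|^2\le K\,\E\big[|X_{\mu_1}-X_{\nu_1}|^2+|X_{\mu_2}-X_{\nu_2}|^2\big]$.
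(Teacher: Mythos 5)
Your plan pivots on the exact identity $f^{(K)}(t,x)=f^{(K')}(t,\iota(x))$ for $K\mid K'$, but this is precisely what you cannot get: the restriction of a viscosity solution to a face of the boundary is not, in general, a viscosity solution of any PDE intrinsic to that face, and your attempt to force this fails at the crucial subsolution direction. Concretely, suppose $(t,x)\in(0,T)\times U_K$ and $\phi\in C^\infty$ are such that $f^{(K')}\circ\iota-\phi$ has a local max at $(t,x)$ relative to $\bar U_K$. The natural move --- lift $\phi$ to $\bar U_{K'}$ via the block-averaging map and try to invoke the definition of subsolution for $f^{(K')}$ --- does not work, because the lifted difference has a local max over the diagonal face $\iota(\bar U_K)\subset\partial U_{K'}$ only, not over a neighborhood in $\bar U_{K'}$, so the definition of viscosity subsolution for $f^{(K')}$ gives you nothing. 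Your claim that ``the Neumann condition forces $\nabla f^{(K')}$ to be constant within blocks'' on that face is also not a consequence of the boundary condition in Definition~\ref{def.solution}; the Neumann condition is a one-sided inequality (an alternative in the sub-/supersolution definitions), and it does not determine the tangential behaviour of the gradient along a corner face, nor even ensure the gradient exists there. Indeed, the exact identity itself should be false in general: write the Hopf--Lax formula for the two problems and observe that $f^{(K')}(t,\iota(x))$ involves a supremum over all of $\bar U_{K'}$, while $f^{(K)}(t,x)$ corresponds to restricting that supremum to the diagonal face; unless $\psi$ has a very special structure, these do not coincide.

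The paper's argument goes in the opposite direction and avoids restriction entirely. It \emph{extends} $f^{(K)}$ to $\bar U_{K'}$ by pre-composing with the block-averaging map (producing $f^{(K,K')}$), observes that this extension solves the $K'$-dimensional equation in the viscosity sense --- extension via a smooth surjection is unproblematic, unlike restriction to a face --- but with a different initial datum, and then applies the comparison principle \emph{at dimension} $K'$, yielding the estimate \eqref{e.comp.fkk'.fk'}. The $O(K^{-1/2})$ rate comes from bounding the discrepancy of the two initial conditions, not from any exact consistency. Your bookkeeping at the end (the $W_1$ truncation estimate, the Jensen bound for \eqref{e.lip.f}) is in line with what the paper does, but it cannot be salvaged without replacing the exact identity by this approximate, $K'$-dimensional comparison. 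As a smaller point, the interior Lipschitz constant of $f^{(K)}$ with respect to the unscaled Euclidean norm is $K^{-1/2}$, not $\sqrt{2/K}$.
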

Before turning to the proof of Proposition~\ref{p.conv.finite.dim}, we introduce some notation for norms that are rescaled to be consistent with Wasserstein-type distances on the space of probability measures, according to the correspondences discussed at the beginning of this subsection. For every $\rho \in [1,\infty]$ and $x \in \mcl E^K = (\R^2)^K$, we write
\begin{equation}  
\label{e.def.rho.norm}
|x|_\rho := \Ll(\frac 1 K \sum_{k = 1}^K |x_k|^\rho\Rr)^\frac 1 \rho,
\end{equation}
with the usual interpretation as a supremum if $\rho = \infty$. We also define the norm dual to $|\cdot|_\rho$ by setting, for $\tau \in [1,\infty]$ such that $\frac 1 \rho + \frac 1 {\tau} = 1$ and every $x \in \mcl E^K = (\R^2)^K$,
\begin{equation}
\label{e.def.rho*.norm}
|x|_{\tau*} := K^\frac 1 {\rho} \Ll( \sum_{k = 1}^K |x_k|^{\tau} \Rr) ^\frac 1 {\tau} = \Ll( \frac 1 K \sum_{k = 1}^K (K |x_k|)^{\tau} \Rr) ^{\frac 1 {\tau}}.
\end{equation}
We also observe that, by a simple rescaling, the statement of Proposition~\ref{p.comp} also holds if we replace the displays \eqref{e.def.L}-\eqref{e.comp.precise}, by, respectively,
\begin{equation*}  
L := \max(\|\, |\nabla u|_{2*} \, \|_{L^\infty([0,T)\times U_K)}, \|\, |\nabla v|_{2*} \, \|_{L^\infty([0,T)\times U_K)}),
\end{equation*}
\begin{equation}  
\label{e.alt.def.V}
V := \sup \Ll\{ \frac{|\H(p') - \H(p)|}{|p'-p|_{2*}} \ : \ |p|_{2*}, |p'|_{2*} \le L + \delta \Rr\},
\end{equation}
\begin{equation*}  
M > \sup_{0 \le t < T, x \in \bar U_K} \frac{u(t,x) - v(t,x)}{1+|x|_2},
\end{equation*}
and
\begin{equation*}  
(t,x)  \mapsto  u(t,x) - v(t,x) - M \Ll( |x|_2 + Vt - R \Rr)_+.
\end{equation*}

\begin{proof}[Proof of Proposition~\ref{p.conv.finite.dim}]
We decompose the proof into three steps.

\emph{Step 1.} We start with a simple but crucial observation regarding the relationship between the $f^{(K)}$'s for different values of $K$. 
For all integers $K, R \ge 1$, we set $K' := RK$, and for every $x \in \bar U_{K'}$, we define
\begin{equation*}  
x^{(K,K')} := \Ll( \frac 1 R \sum_{r = 1}^R x_r, \frac 1 R \sum_{r = 1}^R x_{R+r}, \ldots, \frac 1 R \sum_{r = 1}^R x_{(K-1)R + r}   \Rr) \in \bar U_K,
\end{equation*}
as well as
\begin{equation*}  
f^{(K,K')}(t,x) := f^{(K)}(t,x^{(K,K')}).
\end{equation*}
In some sense, the function $f^{(K,K')}$ is a ``lifting'' of the function $f^{(K)}$ to the space $\bar U_{K'}$ (and this ``lifting'' is consistent with the identification between measures and elements of~$\bar U_K$ discussed at the beginning of this section). 
Formally, we have for every $k \in \{1,\ldots, K\}$ and $r \in \{1,\ldots, R\}$ that
\begin{equation*}  
\dr_{x_{(k-1) R + r}} f^{(K,K')}(t,x) = \frac 1 R \dr_{x_k} f^{(K)} (t,x^{(K,K')}),
\end{equation*}
and thus, on a formal level, the function $f^{(K,K')}$ solves the same equation as $f^{(K')}$ does, but with a different initial condition. It is not difficult to justify rigorously that $f^{(K,K')}$ indeed solves this equation in the viscosity sense. In a few words, for instance to verify that $f^{(K,K')}$ is a subsolution: suppose that $(t,x)$ is a local maximum of $f^{(K,K')}-\phi$ for some smooth function~$\phi$. Then we can build $\td \phi \in C^\infty((0,\infty)\times \bar U_K)$ by setting, for every $x' \in \bar U_K$,
\begin{equation*}  
\td \phi(t,x') := \phi \Ll( t,x + \Ll( x'_1-x^{(K,K')}_1,\ldots, x'_1-x^{(K,K')}_1, \ldots, x'_K-x^{(K,K')}_K, \ldots, x'_K-x^{(K,K')}_K \Rr)  \Rr) ,
\end{equation*}
where each coordinate in the inner parenthesis above is repeated $R$ times. This ensures that $f^{(K)} - \td \phi$ has a local maximum at $(t,x^{(K,K')})$. We then use that $f^{(K)}$ is a subsolution, and the simple relationship between the deriatives of $\td \phi$ and those of $\phi$, to conclude.

\emph{Step 2.}
We next leverage on this observation to evaluate the difference between $f^{(K,K')}$ and $f^{(K')}$, using Proposition~\ref{p.comp}. Precisely, we will show that there exists a constant $C < \infty$ such that for every $t \ge 0$ and $x \in \bar U_{K'}$,
\begin{equation}
\label{e.comp.fkk'.fk'}
\Ll| f^{(K,K')}(t,x) - f^{(K')}(t,x) \Rr| \le \frac{C}{\sqrt{K}} \Ll( |x|_2 + t \Rr) .
\end{equation}
By the definition of $\psi$ in \eqref{e.def.psi} and Proposition~\ref{p.continuity}, we have that, for every $x,y \in \bar U_K$,
\begin{equation}
\label{e.lipschitz.init.cond}
\Ll| f^{(K)}(0,y) - f^{(K)}(0,x) \Rr| \le   |x-y|_1.
\end{equation}
By Jensen's inequality, we also have 
\begin{equation}
\label{e.lip.f.KK'}
\Ll| f^{(K,K')}(0,y) - f^{(K,K')}(0,x) \Rr| \le   |x^{(K,K')}-y^{(K,K')}|_1 \le  |x-y|_1.
\end{equation}
Since $|\cdot|_1 \le |\cdot|_2$, we can appeal to Proposition~\ref{p.existence} to infer that 
\begin{equation}  
\label{e.lip2.fk}
\|\, |\nabla f^{(K)}|_{2*} \, \|_{L^\infty(\R_+\times U_K)} \le 1,
\end{equation}
and
\begin{equation}  
\label{e.lip2.fkk}
\|\, |\nabla f^{(K,K')}|_{2*} \, \|_{L^\infty(\R_+\times U_K)} \le 1.
\end{equation}
In view of the observation preceding this proof, it is thus legitimate to apply Proposition~\ref{p.comp} with $u$ and $v$ replaced by $f^{(K,K')}$ and $f^{(K')}$, and with the choice of $L = 1$. We also observe that, for every $p, p' \in \mcl E^K = (\R^2)^K$,
\begin{align*}  
\Ll|K \sum_{k = 1}^K p_{k,1} p_{k,2} - K \sum_{k = 1}^K p'_{k,1} p'_{k,2}\Rr|
& = 
\Ll| {K} \sum_{k = 1}^{K} \Ll( p_{k,1} \Ll( p_{k,2} -  p'_{k,2} \Rr) + (p_{k,1} - p'_{k,1})p_{k,2}'  \Rr)  \Rr| 
\\
& 
\le
\Ll({K} \sum_{k = 1}^{K} p_{k,1}^2\Rr)^\frac 1 2\Ll(K \sum_{k = 1}^{K} (p_{k,2} - p'_{k,2})^2\Rr)^\frac 1 2
\\
& \qquad  + \Ll(K \sum_{\ell = 1}^{K} (p'_{k,2})^2\Rr)^\frac 1 2\Ll(K \sum_{k = 1}^{K} (p_{k,1} - p'_{k,2})^2\Rr)^\frac 1 2,
\\
&  \le (|p|_{2*} + |p'|_{2*})|p-p'|_{2*}.
\end{align*}
We can thus for instance choose $V = 3$ when applying Proposition~\ref{p.comp} to our current setting. For convenience, we also fix $M := 2L+1 = 3$, which clearly satisfies \eqref{e.growth.M} for such choices of $u$ and $v$. We thus deduce that, for every $R \in \R$, the mapping
\begin{equation}
\label{e.map.sup}
(t,x) \mapsto f^{(K,K')}(t,x) - f^{(K')}(t,x) - 3 \Ll( |x|_2 + 3t - R  \Rr)_+
\end{equation}
achives its supremum on $\{0\}\times \bar U_{K'}$. We now derive two different bounds on this supremum, the first one being simple and convenient for large $|x|_2$, the second one covering the case of more moderate values of this quantity. The first bound is a consequence of the estimates \eqref{e.lip2.fk} and \eqref{e.lip2.fkk}, and of the fact that $f^{(K,K')}(0,0) = f^{(K')}(0,0)$: we have
\begin{equation}
\label{e.first.bound.init}
f^{(K,K')}(0,x) -f^{(K')}(0,x) - 3 \Ll( |x|_2  - R  \Rr)_+ \le 3R - |x|_2.
\end{equation}
For the second bound, we first rewrite the supremum of \eqref{e.map.sup} over $\{0\} \times \bar U_{K'}$ as
\begin{equation}  
\label{e.write.sup}
\sup_{x \in \bar U_{K'}} \Ll\{\Ll| \psi \Ll( \frac 1 {K'} \sum_{ k = 1}^{K'} \de_{x_k} \Rr) - \psi \Ll( \frac 1 {K} \sum_{ k = 1}^{K} \de_{x^{(K,K')}_k} \Rr)  \Rr|  - 3 \Ll(|x|_2 - R  \Rr)_+ \Rr\}.
\end{equation}
By \eqref{e.lipschitz.init.cond}, the difference of $\psi$'s in the supremum above can be bounded by
\begin{align}  
\label{e.diffy}
\frac 1 {K'} \sum_{k = 1}^{K} \sum_{r = 1}^R \Ll| x_{(k-1)R + r} - x_k^{(K,K')} \Rr| \le \frac 1 K \sum_{k = 1}^K \frac 1 {R^2} \sum_{r,r' = 1}^R \Ll| x_{(k-1)R +r} - x_{(k-1)R +r'} \Rr| .
\end{align}
For every $B \in (0,\infty)$, we have
\begin{equation}  
\label{e.diffy.2}
\frac 1 {K'} \sum_{k = 1}^{K'} |x_{k}| \1_{\{ |x_{k}| \ge B \}} \le \frac 1 {BK'} \sum_{k = 1}^{K'} |x_{k}|^2  = \frac{|x|_2^2}{B}.
\end{equation}
On the other hand,
\begin{align}  
\label{e.diffy.3}
& \frac 1 K \sum_{k = 1}^K \frac 2 {R^2} \sum_{1 \le r' < r \le R}\Ll| x_{(k-1)R +r} - x_{(k-1)R +r'} \Rr| \1_{\{ |x_{(k-1)R +r}| < B \}} 
\\
\notag
& \le \frac 2 {R^2} \sum_{1 \le r' < r \le R} \frac 1 K \sum_{k = 1}^K \Ll| x_{(k-1)R +r} - x_{(k-1)R +r'} \Rr| \1_{\{ |x_{(K-1)R +r}| < B \}} .
\end{align}
By equivalence of norms over $\R^2$, up to a constant factor, we can replace the Euclidean norm in $|x_{(k-1)R +r} - x_{(k-1)R +r'}|$ by the $\ell^1$ norm; and in this case, since $x_{(k-1)R +r'} \le x_{(k-1)R +r}$, the sum above becomes telescopic. We thus have that, for some absolute constant $C < \infty$,
\begin{equation*}  
\frac 2 {R^2} \sum_{1 \le r' < r \le R} \frac 1 K \sum_{k = 1}^K \Ll| x_{(k-1)R +r} - x_{(k-1)R +r'} \Rr| \1_{\{ |x_{(K-1)R +r}| < B \}} \le \frac{C B}{K}. 
\end{equation*}
Summarizing, we have shown that, for every $x \in \bar U_{K'}$ amd $B \in (0,\infty)$,
\begin{align*}  
\Ll| \psi \Ll( \frac 1 {K'} \sum_{ k = 1}^{K'} \de_{x_k} \Rr) - \psi \Ll( \frac 1 {K} \sum_{ k = 1}^{K} \de_{x^{(K,K')}_k} \Rr)  \Rr| \le \frac{2|x|_2^2}{B}+ \frac{C B}{K}.
\end{align*}
For $B = \sqrt{K} \, |x|_2$, this becomes, up to a redefinition of $C < \infty$,
\begin{equation*}  
\Ll| \psi \Ll( \frac 1 {K'} \sum_{ k = 1}^{K'} \de_{x_k} \Rr) - \psi \Ll( \frac 1 {K} \sum_{ k = 1}^{K} \de_{x^{(K,K')}_k} \Rr)  \Rr| \le \frac {C} {\sqrt{K}} \, |x|_2.
\end{equation*}
Summarizing, we have thus shown that the quantity inside the supremum in \eqref{e.write.sup} is bounded by
\begin{equation*}  
 \frac {C} {\sqrt{K}} \, |x|_2 - 3\Ll( |x|_2  - R \Rr)_+ . 
\end{equation*}
Notice that the bound in \eqref{e.first.bound.init} is already negative for $|x|_2 \ge 3 R$. On the complementary event, the quantity above is clearly bounded by $\frac{3 C R}{\sqrt{K}}$. Up to a redefinition of $C < \infty$, we have thus shown that, for every $R > 0$,
\begin{equation*}  
\sup_{t \ge 0, x \in \bar U_{K'}} \Ll\{f^{(K,K')}(t,x) - f^{(K')}(t,x) - 3 \Ll( |x|_2 + 3t - R  \Rr)_+\Rr\} \le \frac {C R}{\sqrt{K}}.
\end{equation*}
Choosing $R = 3|x|_2 + 3t$ then yields one bound for \eqref{e.comp.fkk'.fk'}. The converse bound is obtained in the same way.

\emph{Step 3.} We complete the proof, by showing that there exists a constant $C$ such that for every $t \ge 0$, $\mu = (\mu_1,\mu_2)\in (\mcl P_2(\R_+))^2$, and $1 \le K_1 \le K_2$, we have
\begin{equation}
\label{e.cauchy}
\Ll| f^{(K_1)}(t,x^{(K_1)}(\mu)) - f^{(K_2)}(t,x^{(K_2)}(\mu)) \Rr| \le \frac{C}{\sqrt{K_1}} \Ll( \Ll(\E \Ll[ X_{\mu_1}^2 + X_{\mu_2}^2 \Rr]\Rr) ^\frac 1 2  + t \Rr) .
\end{equation}
In order to show \eqref{e.cauchy}, it suffices to verify that, for all integers $K, R \ge 1$, and with $K' := RK$, we have
\begin{equation}
\label{e.cauchy.easy}
\Ll| f^{(K)}(t,x^{(K)}(\mu)) - f^{(K')}(t,x^{(K')}(\mu)) \Rr| \le \frac{C}{\sqrt{K}} \Ll( \Ll(\E \Ll[ X_{\mu_1}^2 + X_{\mu_2}^2 \Rr]\Rr) ^\frac 1 2  + t \Rr) .
\end{equation}
Indeed, once \eqref{e.cauchy.easy} is proved, we can apply it with $(K,K')$ replaced by $(K_1, K_1K_2)$ and $(K_2,K_1K_2)$ and obtain \eqref{e.cauchy} by the triangle inequality. But \eqref{e.cauchy.easy} is almost identical to \eqref{e.comp.fkk'.fk'}: indeed, the latter identity states that the left side of \eqref{e.cauchy.easy} is bounded by 
\begin{equation*}  
\frac{C}{\sqrt{K}} \Ll( |x^{(K')}(\mu)|_{2} + t \Rr) ,
\end{equation*}
and, by Jensen's inequality,
\begin{equation}  
\label{e.easy.l2.jensen}
|x^{(K')}(\mu)|_{2} = \Ll(\frac{1}{K'} \sum_{k = 1}^{K'} \Ll| K' \int_{\frac{k-1}{K'}}^{\frac k {K'}} (F_{\mu_1}^{-1}(u), F_{\mu_2}^{-1}(u)) \, \d u \Rr|^2 \Rr)^\frac 1 2  \le \Ll(\E \Ll[ X_{\mu_1}^2 + X_{\mu_2}^2 \Rr]\Rr) ^\frac 1 2 .
\end{equation}
Now, it is clear that \eqref{e.cauchy} guarantees the existence of the limit in \eqref{e.conv.finite.dim}. It also yields the estimate \eqref{e.quant.conv}, by letting $K_2$ tend to infinity. To show the Lipschitz estimate \eqref{e.lip.f}, we start from \eqref{e.lip2.fk}, which can be rewritten as, for every $t \ge 0$ and $\mu, \nu \in (\mcl P_2(\R_+))^2$,
\begin{equation*}  
\Ll|f^{(K)}(t,x^{(K)}(\mu)) - f^{(K)}(t,x^{(K)}(\nu)) \Rr| \le  |x^{(K)}(\mu) - x^{(K)}(\nu)|_2.
\end{equation*}
As in \eqref{e.easy.l2.jensen}, we can then bound the right side above by
\begin{equation*}  
\Ll(\E \Ll[ |X_{\mu_1} - X_{\nu_1}|^2 + |X_{\mu_2} - X_{\nu_2}|^2 \Rr]\Rr)^\frac 1 2.
\end{equation*}
The estimate \eqref{e.lip.f} then follows by letting $K$ tend to infinity.
\end{proof}

%
%
%
%
%
%
\section{The free energy is a supersolution}
\label{s.serious}

The main goal of this section is to show that finite-dimensional approximations of $\bar F_N$ are supersolutions to the finite-dimensional approximations of \eqref{e.hj}, up to a small error. Compared with the previous section, we change the indexing convention and write, for every integer $k \ge 1$, 
\begin{multline}
\label{e.redef.Uk}
U_k := \big\{ q = (q_{1,1},\ldots, q_{1,k}, q_{2,1},\ldots, q_{2,k}) \in (0,\infty)^{2k} \ : 
\\ \forall a \in \{1,2\}, \forall \ell \in \{1,\ldots, k-1\}, \ q_{a,\ell} < q_{a,\ell+1} \big\} ,
\end{multline}
and we denote the closure of $U_k$ by $\bar U_k$. 
\begin{theorem}[approximate HJ equation]
\label{t.approx.HJ}
For each integer $k \ge 1$, $t \ge 0$, and $q \in \bar U_k$ indexed as $q = (q_{1,1},\ldots, q_{1,k}, q_{2,1},\ldots, q_{2,k})$, denote
\begin{equation}
\label{e.def.barFNk}
\bar F_N^{(k)}(t,q) := \bar F_N \Ll( t, \frac 1 k \sum_{\ell = 1}^k \de_{q_{1,\ell}} , \frac 1 k \sum_{\ell = 1}^k \de_{q_{2,\ell}} \Rr) ,
\end{equation}
and let $f$ be any subsequential limit of $\bar F_N^{(k)}$ as $N$ tends to infinity. We have, in the sense of viscosity solutions,
\begin{equation}  
\label{e.forward}
\Ll\{
\begin{aligned}
& \dr_t f - k \sum_{\ell= 1}^k \dr_{q_{1,\ell}} f \, \dr_{q_{2,\ell}} f \ge - \frac{13}{k} & \quad  \text{ on } (0,\infty) \times U_k, 
\\
& \n \cdot \nabla f \ge 0 & \quad \text{on } (0,\infty) \times \dr U_k.
\end{aligned}
\Rr.
\end{equation}
\end{theorem}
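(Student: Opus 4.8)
The plan is to establish \eqref{e.forward} by combining three ingredients: the approximate PDE identity \eqref{e.approxN.hj}, the synchronization of overlaps (deferred to Section~\ref{s.synch}), and the fact that the function in Lemma~\ref{l.pos.dr_q} is tilted so that Proposition~\ref{p.boundary} handles the boundary condition.

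\textbf{The interior supersolution property.} First I would work at the level of the smooth approximations, i.e.\ with $\bar F_N^{(k)}$ before taking any limit. By \eqref{e.drt.barFN}, \eqref{e.drq.barFN}, and the computation preceding \eqref{e.approxN.hj}, we have the exact finite-$N$ identity
\begin{equation*}
\dr_t \bar F_N^{(k)} - k \sum_{\ell = 1}^k \dr_{q_{1,\ell}} \bar F_N^{(k)} \, \dr_{q_{2,\ell}} \bar F_N^{(k)} = \frac{1}{N^2} \E \Ll\la \Ll( \si_1 \cdot \si_1' - \E \la \si_1 \cdot \si_1' \vb \al \wedge \al' \ra \Rr) \Ll( \si_2 \cdot \si_2' - \E \la \si_2 \cdot \si_2' \vb \al \wedge \al' \ra \Rr) \Rr\ra,
\end{equation*}
whose absolute value is at most $\frac{1}{N^2} \sum_a \E \la (\si_a \cdot \si_a' - \E\la \si_a\cdot\si_a' \vb \al\wedge\al'\ra)^2 \ra$ by Cauchy--Schwarz. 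The main work is to show that this conditional-variance term is $o(1)$ as $N \to \infty$ (after possibly averaging over the parameters $q$, which gives flexibility), with a quantitative rate producing the constant $13/k$; this is exactly what the ``finitary'' synchronization statement of Section~\ref{s.synch} is designed to deliver. Since $\bar F_N^{(k)}$ is smooth in $q$ (it is an analytic function of the finitely many parameters via Proposition~\ref{p.ipdc1}), being a classical supersolution up to this error is automatic, and passing to a subsequential limit $f$ preserves the supersolution property in the viscosity sense: if $\phi$ touches $f$ from below at an interior point $(t,q)$, one perturbs $\phi$ to $\phi - \gamma|q - q_0|^2$ to get a strict local minimum, uses local uniform convergence of $\bar F_N^{(k)}$ to $f$ to locate nearby minima of $\bar F_N^{(k)} - \phi$, evaluates the classical inequality there, and passes to the limit. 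One must also check a priori compactness/equicontinuity so that subsequential limits exist and are Lipschitz --- this follows from Proposition~\ref{p.continuity} together with a uniform bound on $\bar F_N$, so the Arzelà--Ascoli argument is routine.

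\textbf{The boundary condition.} For the Neumann inequality $\n \cdot \nabla f \ge 0$ on $(0,\infty) \times \dr U_k$, the key observation is that, by Lemmas~\ref{l.pos.dr_q} and~\ref{l.tilted}, each $\bar F_N^{(k)}(t,\cdot)$ is tilted (its gradient lies in $\bar U_k^*$... actually one checks $\nabla \bar F_N^{(k)} \in \bar U_k$ in the normalization where the condition reads as in Lemma~\ref{l.tilted}), and tiltedness is preserved under local uniform limits, so $f(t,\cdot)$ is tilted as well. Then the supersolution analogue of Proposition~\ref{p.boundary} applies: if $(t,x) \in (0,\infty) \times \dr U_k$ is a local minimum of $f - \phi$, the same convex-cone argument (Steps 1--3 of the proof of Proposition~\ref{p.boundary}, with the roles of maxima and $\ge 0$ versus $\le 0$ swapped) produces $v \in \bar U_k^*$ with $-v \in \n(x)$ and $v \cdot \nabla\phi(t,x) \le 0$, hence $\sup_{\nu \in \n(x)} \nu \cdot \nabla\phi(t,x) \ge 0$, which is precisely \eqref{e.bdy.cond.supersol} with the Hamiltonian term absent; combined with the interior part it gives the full viscosity-supersolution statement \eqref{e.forward}.

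\textbf{Main obstacle.} The genuinely hard step is the quantitative control of the conditional variance of $\si_a \cdot \si_a'$ given $\al \wedge \al'$, i.e.\ the synchronization estimate with an explicit rate. Everything else --- the exact PDE identity, the equicontinuity, the tiltedness, and the viscosity-limit bookkeeping --- is standard or already available in the excerpt. I would isolate the synchronization input as a black box with the interface ``for a good choice of the cascade parameters the right side of \eqref{e.approxN.hj} is at most $13/(k N^{\,0})$-type small,'' defer its proof to Section~\ref{s.synch}, and here only carry out the PDE-theoretic packaging described above.
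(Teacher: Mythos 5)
There are two genuine gaps in your plan, one of which is precisely the subtlety the paper spends most of its proof on.

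\textbf{The interior argument does not close.} You propose to show that the conditional-variance error on the right side of \eqref{e.approxN.hj} is $o(1)$ ``after possibly averaging over the parameters $q$, which gives flexibility,'' treat the synchronization input as a black box of the form ``for a good choice of the cascade parameters the error is small,'' and then transport the classical supersolution inequality to the limit by the standard Barles--Perthame stability argument. But the stability argument requires the inequality to hold \emph{at the nearby contact points} $(t_N,q_N)$ that you locate; an a.e.\ (or averaged) estimate does not deliver that, and a limit satisfying the equation a.e.\ need not be a viscosity supersolution. This is exactly what the discussion before the proof of Theorem~\ref{t.approx.HJ} warns about. The actual mechanism in the paper's proof is to verify the Ghirlanda--Guerra identities, and hence the synchronization bound of Proposition~\ref{p.cond.exp}, \emph{at the contact point itself}: the local-minimum structure of $\bar F_N - \td\phi$ (with the extra perturbation variable $x$, and the penalty in \eqref{e.def.tdphi}) bounds the Hessian of $\bar F_N$ in $x$ from below, which together with the concavity of $F_N$ in $x$ and the concentration estimate of Proposition~\ref{p.concentration} yields the approximate GG identities \eqref{e.gg2.explicit}. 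None of this is visible in your proposal, and you never mention the perturbation fields $H_N^x$ at all, although they are indispensable: without them there is no way to enforce GG at the chosen contact point.

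\textbf{The boundary argument is incorrect.} There is no ``supersolution analogue of Proposition~\ref{p.boundary}.'' Tiltedness yields the \emph{subsolution} Neumann condition $\inf_{\nu\in\n(x)}\nu\cdot\nabla\phi\le 0$, not the supersolution one. Indeed, with $D=1$, $K=1$, $\bar U_1=[0,\infty)$, take $f(x)=x$ (tilted) and $\phi(x)=x-x^2$; then $f-\phi$ has a local minimum at $x=0$, but $\phi'(0)=1>0$, so $\sup_{\nu\in\n(0)}\nu\cdot\nabla\phi(0) = -\phi'(0) < 0$. Trying to run Steps 1--3 of Proposition~\ref{p.boundary} with inequalities reversed fails: at a local minimum of $f-\phi$ the two inequalities ``$f(t,x+\ep v)-\phi(t,x+\ep v)\ge f(t,x)-\phi(t,x)$'' and ``$f(t,x+\ep v)\ge f(t,x)$'' (from tiltedness, $v\in\bar U_K^*$) do not bound $\phi(t,x+\ep v)-\phi(t,x)$ in either direction. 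The paper handles the boundary in the opposite way: it assumes without loss of generality that the Neumann part of \eqref{e.bdy.cond.supersol} fails at $(t_\infty,q_\infty)$ (see \eqref{e.boundary.Neumann}) and then proves the PDE inequality \eqref{e.goal}; the negated Neumann condition is used in Step 5, together with Lemma~\ref{l.pos.dr_q}, to show $\nabla\td\phi\in\bar U_k$ and thereby to transfer the inequality from $\bar F_N$ to $\td\phi$ via the cone argument \eqref{e.comp.H}--\eqref{e.diff.H}. Your decomposition into a separately proved interior inequality and a separately proved Neumann inequality does not match the disjunctive structure of the viscosity boundary condition and cannot be made to work.
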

In the statement above, we understand the notion of subsequential limit in the sense of locally uniform convergence. (By Proposition~\ref{p.continuity}, the functions involved are uniformly Lipschitz, and by Lemma~\ref{l.init}, the initial condition does not depend on $N$, so the existence of converging subsequences is clear.) Once Theorem~\ref{t.approx.HJ} is proved, we will combine it with the results of the previous section to obtain a proof of Theorem~\ref{t.main}. 

As was announced in Section~\ref{s.defs}, see in particular \eqref{e.approxN.hj}, we need to show that the overlaps $\si_a \cdot \si_a'$ are ``typically'' synchronized with the overlap $\al \wedge \al'$. The argument for achieving this relies on the fact that, possibly after a small perturbation of the energy function, we can ensure that the structure of the Gibbs measure is ultrametric \cite{pan.aom}. That the ultrametricity can be used to infer synchronization was first observed in \cite{pan.multi}; we revisit the argument in Section~\ref{s.synch} below to provide us with a ``finitary'' version of the statement of synchronization, which is more adapted to the needs of the proof of Theorem~\ref{t.approx.HJ}. The small perturbations of the energy function are meant to ensure the validity of the Ghirlanda-Guerra identities. The reader may want to have a brief look at Section~\ref{s.synch} to understand better the motivation behind the introduction of such perturbations. 

The bird's eye view proposed above is that ``typically'', the overlaps synchronize. One subtle point is to uncover what ``typically'' should actually mean. We cannot hope for this synchronization property to hold for \emph{every} choice of the parameters. Conversely, knowing that synchronization occurs for almost every choice of the parameters is not sufficient. Indeed, this would boil down to considering a limit free energy that satisfies the partial differential equation \eqref{e.hj.finite} at almost every point. But such a property is unfortunately \emph{not} sufficient to identify the solution to \eqref{e.hj.finite} uniquely; and this is the reason why the more involved notion of viscosity solutions is introduced. Roughly speaking, we will be able to show that synchronization occurs at any contact point appearing in the definition of supersolution. More precisely, in the notation of Definition~\ref{def.solution}, at a point where $f-\phi$ is minimal, we will be able to leverage on the fact that the Hessian of $f$ must be bounded from below to deduce the validity of the Ghirlanda-Guerra identities, and therefore the synchronization of the overlaps.

We now introduce the small perturbations of the energy function alluded to above.
We fix $(\lambda_n)_{n \ge 1}$ an enumeration of the set of rational numbers in $[0,1]$. For every integer triple $h = (h_1,h_2,h_3) \in \N_*^3$ and $a \in \{1,2\}$, we define the random energy $(H_N^{a,h}(\si,\al))_{\si \in \R^{2N}, \al \in \N^k}$, which is a centered Gaussian field with covariance given, for every $\si,\si' \in \R^{2N}$ and $\al,\al' \in \N^k$, by
\begin{equation}  
\label{e.cov.HNah}
\E \Ll[ H_N^{a,h}(\si,\al) H_N^{a,h}(\si',\al') \Rr] = N \Ll( \lambda_{h_1} \frac{\si_{a}\cdot \si_{a}'}{N} + \lambda_{h_2} \frac{\al \wedge \al'}{k} \Rr)^{h_3} .
\end{equation}
The fact that such a Gaussian random field exists is shown in Lemma~\ref{l.exist.HNah} of the appendix. (It is also seen there that the variables $\al$ can be embedded into a Hilbert space in such a way that $\al \wedge \al'$ becomes the scalar product of the ``embedded'' variables. Strictly speaking, this observation is required to use the results of Section~\ref{s.synch} with these variables.) We impose the fields $(H_N^{a,h})_{a \in \{1,2\}, h \in \N_*^3}$ to be independent, and to be independent of the other random variables in the problem. Enlarging the probability space if necessary, we assume that these additional random fields are defined on the probability space with measure $\P$. Let $h_+$ be an integer that will be chosen sufficiently large (in terms of $k$) in the course of the argument. For convenience, we understand that every element $x \in \R^{2+2 h_+^3}$ is indexed according to
\begin{equation*}  
x = (x_1,(x_{1,h})_{h \in \{1,\ldots, h_+\}^3}, x_2, (x_{2,h})_{h \in \{1,\ldots, h_+\}^3}) .
\end{equation*}
With this understanding, we set
\begin{equation*}  
H_N^x(\si,\al) := N^{-\frac 1 {16}} \sum_{a \in \{1,2\}} \Ll(x_a |\si_a|^2 + \sum_{h \in \{1,\ldots, h_+\}^3} x_{a,h} H_N^{a,h}(\si,\al) \Rr).
\end{equation*}
The prefactor $N^{-\frac 1 {16}}$ is meant to ensure that $H_N^x$ will not contribute to the limit free energy, see \eqref{e.def.FNx} and \eqref{e.dr.barF.xah} below. The exponent $\frac 1 {16}$ is relatively arbitrary; one could replace it by any exponent in the interval $(0,\frac 1 8)$. 
We now define a new free energy that includes the perturbative terms: for every $t \ge 0$, $\mu \in (\mcl P(\R_+))^2$ of the form \eqref{e.mu.diracs}, and $x \in \R^{2 + 2 h_+^3}$, we set, with $H_N^t$ defined in~\eqref{e.def.HNt} and $H^\mu_N$ defined in \eqref{e.def.HNalpha},
\begin{equation}  
\label{e.def.FNx}
F_N(t,\mu,x)  := -\frac 1 N \log \int \sum_{\al \in \N^k} \exp \Ll( H_N^t(\si) + H_N^\mu(\si,\al) + H_N^x(\si,\al) \Rr)  \, v_\al \d P_N(\si),
\end{equation}
as well as
\begin{equation*}  
\bar F_N(t,\mu,x) := \E \Ll[ F_N(t,\mu,x) \Rr].
\end{equation*}
In the last two displays, we slightly abuse notation in that we keep denoting the free energy by $F_N$ (or $\bar F_N$ for its average), although there are now additional variables compared to the quantity defined in \eqref{e.def.FN}. This abuse of notation does not seem to risk causing much confusion. Indeed, every identity we have seen so far is still valid if $F_N(t,\mu)$ is replaced by $F_N(t,\mu,x)$, provided that we redefine the Gibbs measure in \eqref{e.def.gibbs} to include the perturbation terms. Moreover, whenever a risk of confusion arises, we can always write the variables explicitly to dispel it.

We now record a few identities involving the derivatives of $F_N$ and $\bar F_N$ with respect to this new variable~$x$. We have
\begin{equation}
\label{e.dr.F.xah}
\dr_{x_{a,h}} F_N = - N^{-1 - \frac 1 {16}} \la H_N^{a,h}(\si,\al) \ra,
\end{equation}
and, by \eqref{e.cov.HNah} and Gaussian integration by parts, see \eqref{e.gibp1},
\begin{align}  
\notag
\dr_{x_{a,h}} \bar F_N 
& =  -N^{-1 - \frac 1 {16}} \E \la H_N^{a,h}(\si,\al) \ra 
\\
\label{e.dr.F.xah2}
& =  N^{-\frac 1 8} x_{a,h} \,   \E \la  \Ll( \lambda_{h_1} \frac{ \si_a \cdot \si_a'}{N} + \lambda_{h_2} \frac{\al \wedge \al'}{k}  \Rr)^{h_3} - \Ll( \lambda_{h_1} \frac{|\si_a|^2 }{N} + \lambda_{h_2}  \Rr)^{h_3} \ra.
\end{align}
In particular, recalling that $\lambda_{h_1}, \lambda_{h_2} \in [0,1]$, we have
\begin{equation}  
\label{e.dr.barF.xah}
\Ll| \dr_{x_{a,h}} \bar F_N  \Rr| \le 2^{h_3+2} N^{-\frac 1 8} |x_{a,h}|.
\end{equation}
Similarly, for every $a \in \{1,2\}$,
\begin{equation}
\label{e.dr.F.xa}
\dr_{x_{a}} F_N = -N^{-1 -\frac 1 {16}} \la |\si_a|^2 \ra,
\end{equation}
and in particular,
\begin{equation}
\label{e.dr.barF.xa}
\Ll|\dr_{x_{a}} \bar F_N \Rr| \le N^{-\frac 1 {16}}.
\end{equation}
We also have
\begin{equation}  
\label{e.dr2.xph}
\dr_{x_{a,h}}^2 \bar F_N =  -N^{-1-\frac 1 8} \E \Ll[\la  \Ll( H_N^{a,h}(\si,\al) \Rr) ^2  \ra - \la  H_N^{a,h}(\si,\al) \ra^2\Rr] ,
\end{equation}
and
\begin{equation}  
\label{e.dr2.xa}
\dr_{x_{a}}^2 \bar F_N = -N^{-1-\frac 1 8} \E  \Ll[\la  |\si_a|^4  \ra - \la  |\si_a|^2 \ra^2\Rr].
\end{equation}
Before we turn to the proof of Theorem~\ref{t.approx.HJ}, we record a useful concentration estimate for the function $F_N$. 

\begin{proposition}[Concentration of $F_N$]
\label{p.concentration}
Let $k,h_+ \in \N_*$ and, for every $(t,q,x) \in \R_+ \times \bar U_k \times \R^{2+h_+^3}$, let
\begin{equation}
\label{e.def.FNk}
F_N^{(k)}(t,q,x) := F_N \Ll( t,  \,\frac 1 k \sum_{\ell = 1}^k \de_{q_{1,\ell}} , \, \frac 1 k \sum_{\ell = 1}^k \de_{q_{2,\ell}} , \, x\Rr),
\end{equation}
as well as
\begin{equation}
\label{e.def.bar.FNk}
\bar F_N^{(k)}(t,q,x) := \E \Ll[ F_N^{(k)}(t,q,x) \Rr] .
\end{equation}
For every $M < \infty$, $p \in [1,\infty)$ and $\ep > 0$, there exists $C  < \infty$ such that for every $N \ge 1$,
\begin{equation*}  
\E \bigg[ \sup_{B_M} \Ll| F_N^{(k)} - \bar F_N^{(k)} \Rr|^p \bigg]^\frac 1 p \le C N^{-\frac 1 2 + \ep},
\end{equation*}
where $B_M$ denotes the set of $(t,q,x) \in \R_+ \times \bar U_k \times \R^{2+h_+^3}$ for which each coordinate is contained in $[-M,M]$ (in other words, $B_M$ is the intersection of $\R_+ \times \bar U_k \times \R^{2+h_+^3}$ with the $(3+2k+h_+^3)$-dimensional $L^\infty$ ball of radius $M$). 
\end{proposition}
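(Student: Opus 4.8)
The plan is to deduce the estimate from a \emph{pointwise} sub-Gaussian concentration bound together with a covering argument over the fixed, finite-dimensional compact set $B_M$; this is the standard route for free energies of mean-field spin-glass models, and it parallels the analogous statements in \cite{pan,parisi}.

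For the pointwise bound, I would fix $(t,q,x)\in B_M$ and separate the two sources of randomness. All the random fields entering $F_N^{(k)}(t,q,x)$ — namely $H_N$, the Gaussians $(z_{\al,a,i})$, and the perturbation fields $(H_N^{a,h})$ — are jointly Gaussian; the only non-Gaussian ingredient is the Poisson--Dirichlet cascade $(v_\al)$. Writing $\mcl G$ for the collection of all these Gaussian variables and freezing the cascade, the map $\mcl G\mapsto F_N^{(k)}(t,q,x)$ is a Lipschitz function of a standard Gaussian vector; differentiating it produces Gibbs averages of the quantities $N^{-1/2}H_N(\si)$, $N^{-1/2}z_{\al_{|\ell},a}\cdot\si_a$, and $N^{-1-\frac1{16}}H_N^{a,h}(\si,\al)$, whose squared $L^2(\la\cdot\ra)$-norms are $O(N^{-1})$ uniformly over $B_M$ and over the cascade, using $\supp P_N\subset[-1,1]^{2N}$ and the covariance identities \eqref{e.correl.HN}, \eqref{e.overlap.ext.field}, \eqref{e.cov.HNah}. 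The Gaussian concentration inequality then gives $\P(|F_N^{(k)}(t,q,x)-\E[F_N^{(k)}(t,q,x)\mid(v_\al)]|\ge\lambda)\le 2\exp(-cN\lambda^2)$ with $c=c(M,k,h_+)>0$. It remains to control, over the cascade, the fluctuations of $\E[F_N^{(k)}\mid(v_\al)]$ around $\bar F_N^{(k)}(t,q,x)$; since each $H_N^{a,h}$ can be built additively along the tree $\mcl A$ (this is how it is constructed in Lemma~\ref{l.exist.HNah} of the appendix), the exponent in \eqref{e.def.FNk} is tree-additive and the distributional invariance properties of Poisson--Dirichlet cascades yield a bound of the same form, possibly after enlarging $c$ (this is exactly the concentration of a cascade free energy used in \cite{pan}; see also \cite{parisi}). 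Altogether one obtains $\sup_{(t,q,x)\in B_M}\P(|F_N^{(k)}(t,q,x)-\bar F_N^{(k)}(t,q,x)|\ge\lambda)\le C\exp(-cN\lambda^2)$ for all $\lambda>0$.

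Next I would promote this to the uniform bound. The set $B_M$ lies in the space $\R_+\times\bar U_k\times\R^{2+h_+^3}$ of some fixed dimension $d:=3+2k+h_+^3$. On $B_M$ the averaged free energy $\bar F_N^{(k)}$ is Lipschitz with an $N$-independent constant — $\dr_t\bar F_N$ and $\dr_{q_{a,\ell}}\bar F_N$ are bounded by $1$ by \eqref{e.drt.barFN}--\eqref{e.drq.barFN} and $\supp P_N\subset[-1,1]^{2N}$, while the $x$-derivatives are $O(N^{-1/16})$ by \eqref{e.dr.barF.xah} and \eqref{e.dr.barF.xa} — and for each realization $(t,q,x)\mapsto F_N^{(k)}(t,q,x)$ is $\tfrac12$-Hölder in $t$ and Lipschitz in $(q,x)$ with a random modulus of continuity $L_N$ obeying $\E[L_N^p]^{1/p}\le C_pN^{C}$ for every $p$ (a routine consequence of crude bounds on $\sup_\si|H_N(\si)|$, on $\la|H_N^{a,h}|\ra$, and on the remaining Gibbs averages). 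Given $p\in[1,\infty)$ and $\ep>0$, I would take a $\de$-net $\mcl N$ of $B_M$ with $\de=N^{-\beta}$ for $\beta$ large (depending on $p,\ep,C,d$), so $\#\mcl N\le CN^{d\beta}$, and observe that on the complement of the exceptional event
\begin{equation*}
\Big\{\max_{z\in\mcl N}|F_N^{(k)}(z)-\bar F_N^{(k)}(z)|>N^{-\frac12+\ep}\Big\}\cup\big\{L_N>N^{\frac\beta2-\frac12+\ep}\big\}
\end{equation*}
one has $\sup_{B_M}|F_N^{(k)}-\bar F_N^{(k)}|\le CN^{-\frac12+\ep}$. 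By the pointwise bound, a union bound, and Markov's inequality applied to a high enough moment of $L_N$, the probability of this exceptional event is at most $CN^{d\beta}\exp(-cN^{2\ep})+C_qN^{-q(\frac\beta2-\frac12+\ep-C)}$, which is smaller than any prescribed negative power of $N$ once $\beta$ and $q$ are large. Combining this with a crude polynomial bound $\E[\sup_{B_M}|F_N^{(k)}-\bar F_N^{(k)}|^{2p}]^{1/(2p)}\le CN^{C'}$ and the Cauchy--Schwarz inequality to absorb the contribution of the exceptional event then gives $\E[\sup_{B_M}|F_N^{(k)}-\bar F_N^{(k)}|^p]^{1/p}\le CN^{-\frac12+\ep}$.

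The only genuinely model-specific input is the pointwise concentration in the presence of the cascade, and this is classical — Gaussian concentration for the disorder, and the exact invariance of Poisson--Dirichlet cascades for the cascade randomness — so I expect the actual work to be the bookkeeping of the covering argument that passes from the pointwise estimate to the uniform one. Here the fact that the target rate $N^{-\frac12+\ep}$ is \emph{weaker} than the pointwise scale $N^{-\frac12}$ is helpful: it leaves enough slack to absorb the crude, merely polynomial, control of the random modulus of continuity $L_N$, so that the sub-Gaussian tails at the net points are what dictate the final estimate.
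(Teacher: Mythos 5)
Your approach is essentially the paper's: a pointwise sub-Gaussian concentration estimate (ultimately deferred to Panchenko's book for the combined Gaussian-plus-cascade randomness, just as the paper cites \cite[Theorem~1.2]{pan}) followed by a covering argument over the finite-dimensional compact set $B_M$, using the slack between the pointwise scale $N^{-1/2}$ and the target $N^{-1/2+\ep}$ to absorb a polynomial net. The bookkeeping differs slightly but harmlessly: the paper bounds the supremum over the lattice by a sum of moments and optimizes the mesh $\ep$, whereas you use a union bound, Markov on a high moment of the random modulus, and Cauchy--Schwarz to absorb an exceptional event. The paper also gets the cleaner estimate $\E[X^p]\le C$ (uniform in $N$) for the random modulus via the Gaussian integration-by-parts identity \eqref{e.gibpp}, while you settle for a polynomial bound; both suffice here.

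One substantive error you should correct: $F_N^{(k)}$ is \emph{not} Lipschitz in $q$. The derivative $\dr_{q_{a,\ell}}F_N^{(k)}$ involves the factors $(2q_{a,\ell}-2q_{a,\ell-1})^{-1/2}$ and $(2q_{a,\ell+1}-2q_{a,\ell})^{-1/2}$, which blow up as consecutive coordinates merge at the boundary of $\bar U_k$; so a Lipschitz modulus $L_N$ for the $q$-dependence would be infinite, and the claimed polynomial moment bound on it is false. The correct statement — and the one the paper proves by integrating the derivative — is that $F_N^{(k)}$ is H\"older-$\tfrac12$ in $q$ (and in $t$), with a random H\"older constant of the form $CX$ whose moments are controlled. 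This does not derail your argument, since the covering scheme accommodates H\"older-$\tfrac12$ regularity after simply increasing the mesh exponent $\beta$; but the step as written would not compile.
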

\begin{proof}
By \cite[Theorem~1.2]{pan}, there exists $C < \infty$ such that for every $(t,q,x) \in  B_M$ and $a \ge 0$,
\begin{equation}  
\label{e.concentration.FN}
\P \Ll[  \Ll| (F_N^{(k)} - \bar F_N^{(k)}) (t,q,x) \Rr|^2 \ge \frac{a}{N} \Rr] \le 2 \exp \Ll( - \frac{a}{C} \Rr) .
\end{equation}
In order to conclude, we need some estimate on the modulus of continuity of $F_N^{(k)}$. We denote
\begin{equation*}  
X := 1 + \frac{\Ll|\la H_N(\si) \ra\Rr|}{N} + \frac 1 N\sum_{a \in \{1,2\}} \Ll( \sum_{\ell = 0}^k \Ll|\la z_{\al_{|\ell},a} \cdot \si_a \ra\Rr| + \sum_{h \in \{1,\ldots, h_+\}^3} \Ll|\la H_N^{a,h}(\si,\al) \ra \Rr|\Rr) .
\end{equation*}
By integration of \eqref{e.drt.FN}, we see that for every $t,t' \in [0,M]$, $q \in \bar U_k$, and $x \in \R^{2+h_+^3}$,
\begin{align*}  
\Ll|F_N^{(k)}(t',q,x) - F_N^{(k)}(t,q,x) \Rr| 
& \le C \Ll(1+\frac{\Ll|\la H_N(\si) \ra\Rr|}{N} \Rr) |t'-t|^\frac 1 2
\\
& \le C X |t'-t|^\frac 1 2.
\end{align*}
Similarly, we can compute, for every $\ell \in \{1,\ldots, k-1\}$,
\begin{equation*}  
\dr_{q_{a,\ell}} F_N^{(k)} = -\frac 1 N \la (2q_{a,\ell} - 2q_{a,\ell-1})^{-\frac 12} z_{\al_{|\ell},a} \cdot \si_a - (2q_{a,\ell+1} - 2q_{a,\ell})^{-\frac 12} z_{\al_{|\ell+1},a} \cdot \si_a \ra,
\end{equation*}
with the understanding that $q_{a,0} = 0$ here, and, in the case $\ell = k$,
\begin{equation*}  
\dr_{q_{a,k}} F_N^{(k)} = -\frac 1 N \la (2q_{a,k} - 2q_{a,k-1})^{-\frac 12} z_{\al_{|k},a} \cdot \si_a\ra.
\end{equation*}
By integration, we find that, for every $q,q' \in \bar U_k$, $t \ge 0$, and $x \in \R^{2+h_+^3}$,
\begin{equation*}  
\Ll|F_N^{(k)}(t,q',x) - F_N^{(k)}(t,q,x) \Rr| \le C X \sum_{a \in \{1,2\}} \sum_{\ell = 1}^k |q'_{a,\ell}-q_{a,\ell}|^\frac 1 2.
\end{equation*}
Finally, by \eqref{e.dr.F.xah} and \eqref{e.dr.F.xa}, we also have that
\begin{equation*}  
\Ll| F_N^{(k)}(t,q,x')  - F_N^{(k)}(t,q,x)\Rr| \le C N^{-\frac 1 {16}} X |x'-x|. 
\end{equation*}
On the other hand, it follows from \eqref{e.drt.barFN}, \eqref{e.drq.barFN}, \eqref{e.dr.barF.xah}, and \eqref{e.dr.barF.xa}, that the function $\bar F_N^{(k)}$ is Lipschitz continuous (globally in $t$ and $q$, and locally in $x$). For every $\ep \in (0,1]$, we denote
\begin{equation*}  
A_\eps := \Ll(\ep \Z^{3 + 2k+h_+^3}\Rr) \cap B_M .
\end{equation*}
The previous estimates imply that
\begin{equation*}  
\sup_{B_M} \Ll| F_N^{(k)} - \bar F_N^{(k)} \Rr| \le \sup_{A_\ep} \Ll| F_N^{(k)} - \bar F_N^{(k)} \Rr| + C X \sqrt{\ep},
\end{equation*}
and therefore, for every $p \ge 1$, 
\begin{equation*}  
\E \bigg[ \sup_{B_M} \Ll| F_N^{(k)} - \bar F_N^{(k)} \Rr|^p \bigg] \le C \, \E \bigg[ \sup_{A_\ep} \Ll| F_N^{(k)} - \bar F_N^{(k)} \Rr|^p \bigg] + C \ep^\frac p 2 \E[X^p],
\end{equation*}
with a constant $C < \infty$ that may depend on $p$ (in addition to $k,h_+$, and $M$).
We bound the supremum over $A_\ep$ by the sum over $A_\ep$ and use \eqref{e.concentration.FN} to get 
\begin{equation*}  
\E \bigg[ \sup_{A_\ep} \Ll| F_N^{(k)} - \bar F_N^{(k)} \Rr|^p \bigg] \le C |A_\ep| N^{-\frac p 2} = C \ep^{-\Ll( 3 + 2k+h_+^3 \Rr) } N^{-\frac p 2}.
\end{equation*}
Using \eqref{e.gibpp}, we see that, for every $(t,q,x) \in B_M$,
\begin{equation*}  
\E \la (H_N(\si))^{2p} \ra \le C N^{2p},
\end{equation*}
and similarly, for every $a \in \{1,2\}$ and $\ell \in \{0,\ldots,k\}$, 
\begin{equation*}  
\E \la (z_{\al_{|\ell,a}} \cdot \si_a)^{2p} \ra \le C N^{2p},
\end{equation*}
as well as, for every $h \in \{1,\ldots, h_+\}^3$,
\begin{equation*}  
\E \la (H_N^{a,h}(\si,\alpha))^{2p} \ra \le C N^{2p}. 
\end{equation*}
By Jensen's inequality, this implies that $\E [X^p] \le C$ (in other words, $\E[X^p]$ is bounded uniformly over $N$). We have thus shown that, with $\al :=3 + 2k+h_+^3$,
\begin{equation*}  
\E \bigg[ \sup_{B_M} \Ll| F_N^{(k)} - \bar F_N^{(k)} \Rr|^p \bigg]^\frac 1 p \le  C \ep^{-\frac \al p } N^{-\frac 1 2} + C \ep^\frac 1 2. 
\end{equation*}
Choosing $\ep = N^{-\frac p {p + 2\al}}$, we can bound the right side above by $C N^{-\frac{p}{2(p+2\al)}}$. By taking $p$ sufficiently large, we can bring the exponent $\frac{p}{2(p+2\al)}$ as close to $\frac 1 2$ as desired. By Jensen's inequality, this proves the claim.
\end{proof}
\begin{proof}[Proof of Theorem~\ref{t.approx.HJ}]
We fix the integer $h_+$ sufficiently large that, with the choice of $\ep = k^{-4}$, the statement of Proposition~\ref{p.cond.exp} holds for some $\de \ge h_+^{-1}$. Recall that we slightly abuse notation and write $\bar F_N^{(k)}$ both to denote the function in \eqref{e.def.barFNk} and the function in \eqref{e.def.bar.FNk}. We can dispel the confusion by writing $(t,q) \mapsto \bar F_N^{(k)}(t,q)$ for the former and $(t,q,x) \mapsto \bar F_N^{(k)}(t,q,x)$ for the latter. In order to lighten the notation, we drop the superscript $^{(k)}$ and simply write $\bar F_N$ in place of $\bar F_N^{(k)}$ throughout (and similarly for $F_N^{(k)}$).
Let $f$ be a subsequential limit of the mapping $(t,q) \mapsto \bar F_N(t,q)$. For convenience, we omit to denote the particular subsequence along which the convergence of $(t,q) \mapsto \bar F_N(t,q)$ to~$f$ holds.


Let $(t_\infty,q_\infty) \in (0,\infty) \times \bar U_k$ and $\phi \in C^\infty((0,\infty)\times \bar U_k)$ be such that $f - \phi$ has a local minimum at $(t_\infty,q_\infty)$. 
Without loss of generality, we may assume that 
\begin{equation}
\label{e.boundary.Neumann}
q_\infty \in \dr \bar U_k \quad \implies \quad \max_{\nu \in \n(q_\infty)} \nu \cdot \nabla \phi(t_\infty,q_\infty) < 0,
\end{equation}
since in the complementary event, the Neumann boundary condition is satisfied. Under this condition, we will show that
\begin{equation}  
\label{e.goal}
\Ll(\dr_t \phi - k \sum_{\ell= 1}^k \dr_{q_{1,\ell}} \phi \, \dr_{q_{2,\ell}} \phi\Rr)(t_\infty,q_\infty) \ge -\frac {13}{k}. 
\end{equation}
Throughout the rest of this proof, we denote by $C < \infty$ a constant whose value may change from one occurence to another, and is allowed to depend on $k$, $h_+$ (which itself has already been fixed in terms of $k$), $t_\infty$, $q_\infty$, and the function $\phi$.
We write 
\begin{equation}
\label{e.def.xinfty}
x_\infty := (1,\ldots, 1) \in \R^{2+2h_+^3}.
\end{equation}
For every $(t,q) \in (0,\infty) \times \bar U_k$, and $x \in \R^{2+2h_+^3}$, we set
\begin{equation}  
\label{e.def.tdphi}
\td \phi(t,q,x) := \phi(t,q) - (t-t_\infty)^2 - |q-q_\infty|^2 - |x-x_\infty|^2.
\end{equation}
The mapping $(t,q,x) \mapsto f(t,q)- \td \phi(t,q,x)$ has a strict local minimum at $(t_\infty,q_\infty,x_\infty)$. In view of \eqref{e.dr.barF.xah} and \eqref{e.dr.barF.xa}, the mapping $(t,q,x) \mapsto \bar F_N(t,q,x)$ converges to the mapping $(t,q,x) \mapsto f(t,q)$ locally uniformly. We deduce that there exist $(t_N,q_N,x_N) \in (0,\infty) \times \bar U_k  \times \R^{2+2h_+^3}$ satisfying
\begin{equation}
\label{e.lim.tN}
\lim_{N \to \infty} (t_N,q_N,x_N) = (t_\infty,q_\infty,x_\infty)
\end{equation}
and such that, for every $N$ sufficiently large, the function $\bar F_N - \td \phi$ has a local minimum at $(t_N,q_N,x_N)$; more precisely, for every $N$ sufficiently large,
\begin{multline}  
\label{e.loc.min}
(\bar F_N - \td \phi)(t_N,q_N,x_N) 
\\
= \inf \Ll\{ (\bar F_N - \td \phi)(t,q,x) : \ |t - t_N| + |q - q_N|+ |x - x_N| \le C^{-1} \Rr\}  .
\end{multline}
(In the infimum above, we also have the implicit restriction $q \in \bar U_k$, and we may choose $C$ sufficiently large that the condition $|t-t_N| \le C^{-1}$ implies that $t > 0$.)
In particular,
\begin{equation}
\label{e.nablaFN.nablat}
\dr_t (\bar F_N-\td \phi)(t_N,q_N,x_N) = 0,
\end{equation}
\begin{equation}
\label{e.nablaFN.nablaphi.int}
q_N \in U_k \quad \implies \quad \nabla_q (\bar F_N-\td \phi)(t_N,q_N,x_N) = 0,
\end{equation}
and 
\begin{equation}
\label{e.nablaFN.nablaphi2}
\nabla_{x}  (\bar F_N-\td \phi)(t_N,q_N,x_N) = 0.
\end{equation}
We decompose the rest of the proof into five steps.

\emph{Step 1.} In this step, we show that for every $N$ sufficiently large and $|x| \le C^{-1}$,
\begin{equation}  
\label{e.C11.in.q}
-C|x|^2 \le \bar F_N(t_N,q_N,x_N + x) - \bar F_N(t_N,q_N,x_N) - x \cdot \nabla_x \bar F_N(t_N,q_N,x_N)  \le 0.
\end{equation}
The second inequality follows from the fact that $\bar F_N$ is a concave function of $x$ (it is classical to verify that the function $F_N$ itself is concave in $x$, since the Hessian of this function is a covariance matrix, up to a minus sign). To show the first inequality in~\eqref{e.C11.in.q}, we start by writing Taylor's formula:
\begin{multline}  
\label{e.Taylor.FN}
\bar F_N(t_N,q_N,x_N+x) - \bar F_N(t_N,q_N,x_N) 
\\
= x \cdot \nabla_x \bar F_N(t_N,q_N,x_N) 
+ 
\int_0^1 (1-s) x \cdot \nabla^2_x \bar F_N(t_N,q_N,x_N + sx ) x \, \d s,
\end{multline}
where $\nabla_x^2 \bar F_N$ denotes the Hessian of the function $\bar F_N$ in the $x$ variable. Naturally, the formula above is also valid if we replace $\bar F_N$ by $\td \phi$. By \eqref{e.loc.min}, we have that for every $|x| \le C^{-1}$,
\begin{equation*}  
\bar F_N(t_N,q_N,x_N + x) - \bar F_N(t_N,q_N,x_N) \ge \td \phi(t_N,q_N,x_N + x) - \td \phi(t_N,q_N,x_N) .
\end{equation*}
Using also \eqref{e.nablaFN.nablaphi2}, we obtain that
\begin{multline*}  
\int_0^1 (1-s) x \cdot \nabla^2_x \bar F_N(t_N,q_N, x_N + s x)x \, \d s 
\\
\ge \int_0^1 (1-s) x \cdot \nabla^2_x \td \phi(t_N,q_N, x_N + s x)x \, \d s \ge - C |x|^2.
\end{multline*}
Combining this with \eqref{e.Taylor.FN} yields \eqref{e.C11.in.q}.

\emph{Step 2.} We show that, for every $\ep > 0$,
\begin{equation}
\label{e.estim.annealed.fluc}
\E \Ll[ \Ll| \nabla_x(F_N - \bar F_N)(t_N,q_N,x_N) \Rr| ^2   \Rr]  \le C N^{-\frac 1 2 + \ep}, 
\end{equation}
where now we also allow the constant $C < \infty$ to depend on the choice of $\ep > 0$. 
As observed in the previous step, the function $F_N$ is concave in the $x$ variable. We thus have, for every $x \in \R^{2 + 2 h_+^3}$,
\begin{equation*}  
F_N(t_N,q_N,x_N + x) \le F_N(t_N,q_N,x_N) + x \cdot \nabla_x F_N(t_N,q_N,x_N).
\end{equation*}
By \eqref{e.C11.in.q}, we also have, for every $|x| \le C^{-1}$,
\begin{equation*}  
\bar F_N(t_N,q_N,x_N + x) \ge \bar F_N(t_N,q_N,x_N) + x \cdot \nabla_x \bar F_N(t_N,q_N,x_N) - C |x|^2.
\end{equation*}
For a (deterministic) parameter $\lambda \in [0,C^{-1}]$ to be determined in the course of the argument, we combine the two inequalities above and fix
\begin{equation*}  
x = \lambda \, \frac{\nabla_x(\bar F_N - F_N)(t_N,q_N,x_N)}{|\nabla_x(\bar F_N - F_N)(t_N,q_N,x_N)|},
\end{equation*}
so that $|x| \le C^{-1}$, and, for this choice of $x$,
\begin{multline*}  
\lambda  \Ll| \nabla_x(\bar F_N - F_N) (t_N,q_N,x_N)\Rr| 
\\
\le (\bar F_N - F_N)(t_N,q_N,x_N + x) - (\bar F_N - F_N)(t_N,q_N,x_N) + C \lambda^2.
\end{multline*}
By Proposition~\ref{p.concentration}, we infer that
\begin{equation*}  
\lambda^2 \E \Ll[ \Ll| \nabla_x(\bar F_N - F_N) (t_N,q_N,x_N)\Rr|^2 \Rr] 
\le 
C N^{-1+2\ep} + C \lambda^4.
\end{equation*}
Choosing $\lambda = N^{-\frac 1 4+\frac{\ep}2}$ yields \eqref{e.estim.annealed.fluc}.

\emph{Step 3.} We show that the Gibbs measure associated with the choice of parameters $(t_N,q_N,x_N)$ satisfies approximate Ghirlanda-Guerra identities, in the following sense. Recall that we denote by $(\si^\ell, \al^\ell)_{\ell \ge 1}$ a family of independent copies of $(\si,\al)$ under $\la \cdot \ra$. For each $\ell, \ell' \in \N_*$ and $a \in \{1,2\}$, we write
\begin{equation*}  
R^{\ell,\ell'}_0 := \frac{\al^\ell \wedge \al^{\ell'}}{k}, \quad R^{\ell,\ell'}_{a} := \frac{\si_a^{\ell} \cdot \si_a^{\ell'}}{N},  
\end{equation*}
and, for each $n \in \N_*$, we denote by $R^{\le n}$ the array
\begin{equation*}  
R^{\le n} := \Ll( R^{\ell,\ell'}_{a} \Rr)_{a \in \{0,1,2\}, \ell, \ell' \in \{1,\ldots, n\}}.
\end{equation*}
In this step, we show that, for every $\ep > 0$, $a \in \{1,2\}$, $n, h_1,h_2,h_3 \in \{1,\ldots, h_+\}$, and $g \in C({\R^{3n^2}})$ satisfying $\|g\|_{L^\infty} \le 1$, we have
\begin{multline}  
\label{e.gg2.explicit}
 \bigg| \E \la f ( R^{\le n} ) \Ll(\lambda_{h_1} R_a^{1,n+1}+ \lambda_{h_2} R_0^{1,n+1}\Rr)^{h_3} \ra  - \frac 1 n \E \la g(R^{\le n})  \ra \E \la \Ll(\lambda_{h_1} R_a^{1,2}+ \lambda_{h_2} R_0^{1,2}\Rr)^{h_3} \ra 
 \\
  - \frac 1 n\sum_{\ell = 2}^n \E \la g(R^{\le n}) \Ll(\lambda_{h_1} R_a^{1,\ell}+ \lambda_{h_2} R_0^{1,\ell}\Rr)^{h_3}  \ra \bigg| 
\le 
C N^{-\frac 1 8+ \ep},
\end{multline}
where we understand that the Gibbs measure $\la \cdot \ra$ is with the parameters $(t_N,q_N,x_N)$. 
It follows from \eqref{e.C11.in.q} that
\begin{equation*}  
-C \le \nabla_x^2 \bar F_N(t_N,q_N,x_N) \le 0.
\end{equation*}
In particular, by \eqref{e.dr2.xph}, for every $a \in \{1,2\}$ and $h = (h_1,h_2,h_3) \in \{1,\ldots, h_+\}^3$, we have 
\begin{equation*}  
\E \Ll[\la  \Ll( H_N^{a,h}(\si) \Rr) ^2  \ra - \la  H_N^{a,h}(\si) \ra^2\Rr] \le C N^{1+\frac 1 8},
\end{equation*}
and similarly, by \eqref{e.dr2.xa},
\begin{equation*}  
\E  \Ll[\la  |\si_a|^4  \ra - \la  |\si_a|^2 \ra^2\Rr] \le C N^{1 + \frac 1 8}.
\end{equation*}
By \eqref{e.dr.F.xah} and \eqref{e.estim.annealed.fluc}, we also have
\begin{equation*}  
\E \Ll[ \Ll(\la H_N^{a,h}(\si) \ra - \E \la H_N^{a,h}(\si) \ra\Rr)^2 \Rr] \le C N^{\frac 3 2+\frac 1 8 + \ep},
\end{equation*}
and similarly, by \eqref{e.dr.F.xa} and \eqref{e.estim.annealed.fluc}, 
\begin{equation*}  
\E \Ll[ \Ll(\la |\si_a|^2 \ra - \E \la |\si_a| \ra\Rr)^2 \Rr] \le C N^{\frac 3 2+\frac 1 8+ \ep}.
\end{equation*}
Since, for any random variable $X$, we have the variance decomposition
\begin{equation*}  
\E \la \Ll( X - \E \la X \ra \Rr) ^2 \ra = \E \la \Ll( X - \la X \ra \Rr) ^2\ra + \E\Ll[ \Ll( \la X \ra - \E \la X \ra \Rr) ^2\Rr],
\end{equation*}
we deduce that
\begin{equation}  
\label{e.concentr.Hnph}
\E \la \Ll( H_N^{a,h}(\si) - \E \la H_N^{a,h}(\si) \ra \Rr)^2 \ra\le C N^{\frac 3 2 + \frac 1 8 + \ep},
\end{equation}
and
\begin{equation}  
\label{e.concentr.sia}
\E \la \Ll( |\si_a|^2 - \E \la |\si_a|^2 \ra \Rr)^2 \ra\le C N^{\frac 3 2 + \frac 1 8+ \ep}.
\end{equation}
It follows from \eqref{e.concentr.Hnph} that
\begin{equation*}  
\Ll| \E \la g(R^{\le n}) H_N^{a,h}(\si^1,\al^1) \ra - \E \la g(R^{\le n}) \ra \E \la  H_N^{a,h}(\si^1,\al^1) \ra  \Rr| \le C N^{\frac 3 4 + \frac 1 {16} + \ep}.
\end{equation*}
Recall the expression for $\E \la H_N^{a,h}(\si,\al) \ra$ in \eqref{e.dr.F.xah2}. By Gaussian integration by parts, see~\eqref{e.gibp1}, we also have
\begin{multline*}  
N^{-1 + \frac 1 {16}}\E \la g(R^{\le n}) H_N^{a,h}(\si^1,\al^1) \ra = \sum_{\ell = 1}^n x_{a,h} \E \la g(R^{\le n}) \Ll( \lambda_{h_1} \frac{ \si_a^1 \cdot \si_a^\ell}{N} + \lambda_{h_2} \frac{\al^1 \wedge \al^\ell}{k}  \Rr)^{h_3}\ra 
\\
- n x_{a,h} \E \la  g(R^{\le n})\Ll( \lambda_{h_1} \frac{ \si_a^1 \cdot \si_a^{n+1}}{N} + \lambda_{h_2} \frac{\al^1 \wedge \al^{n+1}}{k}  \Rr)^{h_3}\ra,
\end{multline*}
where we dropped the dependence on $N$ and simply wrote $x_{a,h}$ for the $(a,h)$ coordinate of the vector $x_N$. Recall that $x_N \to x_\infty$ with $x_\infty$ defined in \eqref{e.def.xinfty}, so that for $N$ sufficiently large, we have $x_{a,h} \ge \frac 1 2$ (this is the point of defining $x_\infty$ in this way, as opposed to setting $x_\infty = 0$). 
Matching the term indexed by $\ell = 1$ in the sum above with the last term in~\eqref{e.dr.F.xah2}, we would like to show that the difference 
\begin{equation*}  
\E \la g(R^{\le n}) \Ll( \lambda_{h_1} \frac{ |\si_a^1|^2}{N} + \lambda_{h_2}   \Rr)^{h_3}\ra - \E \la g(R^{\le n}) \ra \, \E \la \Ll( \lambda_{h_1} \frac{ |\si_a^1|^2}{N} + \lambda_{h_2}   \Rr)^{h_3}\ra 
\end{equation*}
is small. Using \eqref{e.concentr.sia} and the fact that the mapping $r \mapsto r^{h_3}$ is Lipschitz over $[0,2]$, we can bound this difference (in absolute value) by $C N^{-\frac 1 4+ \frac 1 {16}+ \ep}$.
Collecting the terms, we obtain~\eqref{e.gg2.explicit}.

\emph{Step 4.} We now use the synchronization result of Section~\ref{s.synch}. Recall from \eqref{e.approxN.hj} that 
\begin{equation*}  
\Ll|\dr_t \bar F_N - k \sum_{\ell = 1}^k  \dr_{q_{1,\ell}} \bar F_N \, \dr_{q_{2,\ell}} \bar F_N \Rr| \le \frac 1 {N^2} \sum_{a \in \{1,2\}} \E \la \Ll( \si_a\cdot \si_a' - \E \la \si_a \cdot \si_a' \vb \al \wedge \al' \ra  \Rr)^2 \ra.
\end{equation*}
By \eqref{e.gg2.explicit}, Proposition~\ref{p.cond.exp} (with the quantities $R_1$ and $R_2$ appearing there being substituted by $R_a$ and $R_0$ in our current notation), and our choice of $h_+$, we infer that for every $N$ sufficiently large,
\begin{equation}  
\label{e.equation.barfn}
\Ll|\dr_t \bar F_N - k \sum_{\ell = 1}^k  \dr_{q_{1,\ell}} \bar F_N \, \dr_{q_{2,\ell}} \bar F_N \Rr|(t_N,q_N,x_N) \le \frac {13} k.
\end{equation}
We will argue in the next step that, for every $N$ sufficiently large, 
\begin{equation}
\label{e.comp.H}
\Ll(\sum_{\ell = 1}^k  \dr_{q_{1,\ell}} \bar F_N \, \dr_{q_{2,\ell}} \bar F_N  - \sum_{\ell = 1}^k  \dr_{q_{1,\ell}} \td \phi \, \dr_{q_{2,\ell}} \td \phi \Rr)(t_N,q_N,x_N) \ge 0.
\end{equation}
Temporarily assuming this, and using also \eqref{e.nablaFN.nablat}, we thus infer that
\begin{equation}  
\label{e.equation.tdphi}
\Ll(\dr_t \td \phi - k \sum_{\ell = 1}^k  \dr_{q_{1,\ell}} \td \phi \, \dr_{q_{2,\ell}} \td \phi\Rr)(t_N,q_N,x_N)  \ge  -\frac {13} k.
\end{equation}
Using \eqref{e.lim.tN} and the fact that $\td \phi$ is a smooth function, we deduce that the statement \eqref{e.equation.tdphi} also holds at $(t_\infty,q_\infty,x_\infty)$. Recalling also the definition of~$\td \phi$, see~\eqref{e.def.tdphi}, we conclude that \eqref{e.goal} holds. 

\emph{Step 5.} There only remains to show that \eqref{e.comp.H} holds. If $q_N \in U_k$, then this is immediate, by \eqref{e.nablaFN.nablaphi.int}. In particular, since $q_N$ converges to $q_\infty$ as $N$ tends to infinity, we know that~\eqref{e.comp.H} holds for every $N$ sufficiently large whenever $q_\infty \in U_k$. From now on, we assume that $q_\infty \in \dr U_k$. Using that $(t_N,q_N,x_N)$ tends to $(t_\infty, q_\infty,x_\infty)$, the smoothness of the function $\phi$, and the definition of $\td \phi$ in \eqref{e.def.tdphi}, we can infer from \eqref{e.boundary.Neumann} that, for every~$N$ sufficiently large, 
\begin{equation}  
\label{e.Neumann.2}
\max_{\nu \in \n(q_\infty)} \nu \cdot \nabla \td \phi(t_N,q_N,x_N) \le 0.
\end{equation}
We now argue that
\begin{equation}  
\label{e.Neumann.3}
q_N \in \dr \bar U_k \quad \implies \quad \max_{\nu \in \n(q_N)} \nu \cdot \nabla \td \phi(t_N,q_N,x_N) \le 0.
\end{equation}
The set $\bar U_k$ can be written as the intersection of $2k$ half-spaces, and the condition that $q_\infty \in \dr U_k$ is equivalent to the statement that $q_\infty$ lies on the boundary of some of those half-spaces, say $\mcl D_1,\ldots, \mcl D_\ell$. For $N$ sufficiently large, we know that $q_N$ will not be on the boundary of any other half-space than those $\mcl D_1,\ldots, \mcl D_\ell$. It thus follows that, for $N$ sufficiently large, and whenever $q_N \in \bar U_k$, we have that $\n(q_N)$ is a subset of $\n(q_\infty)$. This yields that \eqref{e.Neumann.3} indeed follows from \eqref{e.Neumann.2}. 

We now argue that 
\begin{equation}
\label{e.belong.nablaphi}
\nabla \td \phi (t_N,q_N,x_N) \in \bar U_k.
\end{equation}
Since the arguments $t_N$ and $x_N$ will be kept fixed, we omit them from the notation. We denote by $(e_{a,\ell})_{a \in \{1,2\}, \ell \in \{1,\ldots k\}}$ the canonical basis of $\R^{2k}$, using our indexing convention. We also write $q_N = (q_{a,\ell})_{a \in \{1,2\}, 1 \le \ell \le k}$, dropping the dependence on $N$ when writing $q_N$ in coordinates. We fix $a \in \{1,2\}$, and first show that $\dr_{q_{a,1}} \td \phi(q_N) \ge 0$. If $q_{a,1} = 0$, then this follows from~\eqref{e.Neumann.3}, since $-e_{a,1} \in \n(q_N)$ in this case. Otherwise, since the function $\bar F_N - \td \phi$ has a local minimum at $q_N$, we have, for every $\ep > 0$ sufficiently small,
\begin{equation*}  
\td \phi(q_N-\ep e_{a,1}) - \td \phi(q_N) \le \bar F_N(q_N-\ep e_{a,1}) - \bar F_N(q_N).
\end{equation*}
Passing to the limit $\ep \to 0$, and using Lemma~\ref{l.pos.dr_q}, we obtain that $\dr_{q_{q,1}} \td \phi(q_N) \ge 0$. We now fix $\ell \in \{1,\ldots, k-1\}$, and argue that $\dr_{q_{a,\ell}} \td \phi(q_N) \le \dr_{q_{a,\ell+1}} \td \phi(q_N)$. If $q_{a,\ell} = q_{a,\ell+1}$, then this follows from \eqref{e.Neumann.3}, since $2^{-\frac 1 2}(e_{a,\ell} - e_{a,\ell+1}) \in \n(q_N)$ in this case. Otherwise, we have, for every $\ep > 0$ sufficiently small,
\begin{equation*}  
\td \phi(q_N+\ep e_{a,\ell} - \ep e_{a,\ell+1}) - \td \phi(q_N) \le \bar F_N(q_N+\ep e_{a,\ell} - \ep e_{a,\ell+1}) - \bar F_N(q_N).
\end{equation*}
Passing to the limit, and using Lemma~\ref{l.pos.dr_q}, we obtain indeed that $\dr_{q_{a,\ell}} \td \phi(q_N) \le \dr_{q_{a,\ell+1}} \td \phi(q_N)$.

We are now ready to conclude. For every $p = (p_{a,\ell})_{a \in \{1,2\}, \ell \in \{1,\ldots, k\}} \in \R^{2k}$, denote
\begin{equation*}  
\msf H(p) := \sum_{\ell = 1}^k p_{1,\ell} \, p_{2,\ell}. 
\end{equation*}
Our aim is to show that 
\begin{equation}  
\label{e.diff.H}
\msf H(\nabla_q \bar F_N) - \msf H(\nabla_q \td \phi) \ge 0,
\end{equation}
where we kept implicit that the gradients are evaluated at $(t_N,q_N,x_N)$. We rewrite the left side of \eqref{e.diff.H} in the form
\begin{equation}  
\label{e.rewrite.diff}
\int_0^1 (\nabla_q \bar F_N - \nabla_q \td \phi)\cdot \nabla \msf H(s \nabla_q \bar F_N + (1-s) \nabla_q \td \phi) \, \d s.
\end{equation}
(Evaluation at $(t_N,q_N,x_N)$ is still kept implicit here.)
Since  $\bar F_N - \td \phi$ has a local minimum at $q_N$, we must have that
\begin{equation}  
\label{e.normal.cone}
\forall y \in \bar U_k, \qquad 
(y-q_N) \cdot \nabla_q (\bar F_N - \td \phi)(t_N,q_N,x_N) \ge 0. 
\end{equation}
Recalling the notation $\bar U_k^*$ from \eqref{e.def.baruk*}, we infer from \eqref{e.normal.cone}  that 
\begin{equation}
\label{e.barUk*}
\nabla_q (\bar F_N - \td \phi)(t_N,q_N,x_N) \in \bar U_k^*.
\end{equation}
(Indeed, for every $z \in \bar U_k$, we can choose $y = q_N +z \in \bar U_k$ in \eqref{e.normal.cone}.) By \eqref{e.belong.nablaphi} and Lemma~\ref{l.pos.dr_q}, we have that, for every $s \in [0,1]$,
\begin{equation*}  
\Ll(s  \nabla_q \bar F_N + (1-s) \nabla_q \td \phi\Rr)(t_N,q_N,x_N) \in \bar U_k.
\end{equation*}
Finally, notice that, for each $p = (p_{a,\ell})_{a \in \{1,2\}, \ell \in \{1,\ldots, k\}}$, the vector $\nabla \msf H(p)$ is obtained by a simple interchange of the index $a \in \{1,2\}$. In particular, it is clear that $\nabla \msf H$ maps $\bar U_k$ into itself. Combining this with the previous display, we infer that the quantity $\nabla \msf H(\cdots)$ appearing in \eqref{e.rewrite.diff} belongs to $\bar U_k$. This and \eqref{e.barUk*} yield \eqref{e.diff.H}, as desired.
\end{proof}

We can now prove the main theorem of the paper.

\begin{proof}[Proof of Theorem~\ref{t.main.full}]
The argument consists in combining the results of Proposition~\ref{p.conv.finite.dim} and Theorem~\ref{t.approx.HJ}. We denote by $(f^{(k)})_{k \ge 1}$ and $f$ the functions appearing in the statement of Proposition~\ref{p.conv.finite.dim}. By Theorem~\ref{t.approx.HJ} and the comparison principle \eqref{e.comp}, we have, for every integer $k \ge 1$, $t \ge 0$ and $q \in \R^{2k}$,
\begin{equation}  
\label{e.main.estim1}
\liminf_{N \to \infty} \bar F_N\Ll(t,\frac 1 k \sum_{\ell = 1}^k \de_{q_{1,\ell}} , \frac 1 k \sum_{\ell = 1}^k \de_{q_{2,\ell}} \Rr) + \frac{13t}{k} \ge f^{(k)}(t,q). 
\end{equation}
Let $\mu = (\mu_1,\mu_2) \in (\mcl P_2(\R_+))^2$, and, for every integer $k \ge 1$, $a \in \{1,2\}$, and $\ell \in \{1,\ldots, k\}$, denote
\begin{equation*}  
q_{a,\ell}^{(k)} := k \int_{\frac{\ell-1}{k}}^{\frac \ell k}F_{\mu_a}^{-1} (u) \, \d u , \quad \text{ and } \quad \mu_a^{(k)} := \sum_{\ell = 1}^k \de_{q_{a,\ell}}. 
\end{equation*}
Recall from \eqref{e.quant.conv} that
\begin{equation}  
\label{e.main.estim2}
\Ll| f^{(k)}(t,q^{(k)}) - f(t,\mu) \Rr| \le \frac{C}{\sqrt{k}}\Ll( t + \Ll(\E \Ll[ X_{\mu_1}^2 + X_{\mu_2}^2 \Rr]\Rr) ^\frac 1 2   \Rr) .
\end{equation}
On the other hand, we have from Proposition~\ref{p.continuity} that 
\begin{align*}  
\Ll|\bar F_N(t,\mu) - \bar F_N(t,\mu^{(k)})\Rr| 
& \le \sum_{a = 1}^2 \E \Ll[ \Ll|X_{\mu_a} - X_{\mu_a^{(k)}} \Rr|\Rr] 
\\
& = \sum_{a = 1}^2 \E \Ll[ \Ll|F_{\mu_a}^{-1}(U) - k \int_{\frac{\lfloor kU \rfloor}{k}}^{\frac{\lfloor kU \rfloor+1}{k}}F_{\mu_a}^{-1}(u) \, \d u\Rr|\Rr] .
\end{align*}
We can bound this term by arguing as in the paragraph starting with \eqref{e.diffy}. Indeed, this is the same argument, with the understanding that $K'$ is now infinite. Explicitly, 
\begin{align*}  
 \E \Ll[ \Ll|F_{\mu_a}^{-1}(U) -  k \int_{\frac{\lfloor kU \rfloor}{k}}^{\frac{\lfloor kU \rfloor+1}{k}}F_{\mu_a}^{-1}(u) \, \d u\Rr| \Rr] 
& =  \sum_{\ell = 1}^{k} \int_{\frac{\ell-1}{k}}^{\frac{\ell}{k}}\Ll|F_{\mu_a}^{-1}(r) - k \int_{\frac{\ell-1}{k}}^{\frac{\ell}{k}} F_{\mu_a}^{-1}(u) \, \d u\Rr|  \, \d r
\\
& \le k\sum_{\ell = 1}^{k} \int_{\frac{\ell-1}{k}}^{\frac{\ell}{k}}\int_{\frac{\ell-1}{k}}^{\frac{\ell}{k}}\Ll|F_{\mu_a}^{-1}(r) -  F_{\mu_a}^{-1}(u) \Rr|   \, \d u \, \d r .
\end{align*}
Paralleling \eqref{e.diffy.2}, we write, for a cutoff value $B \in (0,\infty)$ to be determined, 
\begin{equation*}  
\int_0^1 |F_{\mu_a}^{-1} (u)| \1_{\{F_{\mu_a}^{-1} (u) \ge B\}} \, \d u \le \frac 1 B \int |F_{\mu_a}^{-1} (u)|^2 \, \d u = \frac{\E[X_{\mu_a}^2]}{B},
\end{equation*}
while, as in \eqref{e.diffy.3},
\begin{align*}  
& 2k\sum_{\ell = 1}^{k} \int_{\frac{\ell-1}{k}}^{\frac{\ell}{k}}\int_{\frac{\ell-1}{k}}^{\frac{\ell}{k}}\Ll|F_{\mu_a}^{-1}(r) -  F_{\mu_a}^{-1}(u) \Rr|    \1_{\{u \le r, F_{\mu_a}^{-1}(r) \le B\}}\, \d u \, \d r 
\\
& \qquad =2k\sum_{\ell = 1}^{k} \int_{\Ll[0,\frac 1 k\Rr]^2}\Ll(F_{\mu_a}^{-1}\Ll(\frac{\ell-1}{k} +r\Rr) -  F_{\mu_a}^{-1}\Ll(\frac{\ell-1}{k} +u\Rr) \Rr)    \1_{\{u \le r, F_{\mu_a}^{-1}\Ll(\frac{\ell-1}{k} +r\Rr) \le B\}}\, \d u \, \d r  
\\
& \qquad \le \frac{2B}{k}. 
\end{align*}
Combining the displays above, and choosing $B^2 = k(\E[X_{\mu_1}^2 + X_{\mu_2}^2])$, we arrive at
\begin{equation}  
\label{e.main.estim3}
\Ll|\bar F_N(t,\mu) - \bar F_N(t,\mu^{(k)})\Rr| \le \frac{6}{\sqrt{k}} \Ll( \E[X_{\mu_1}^2 + X_{\mu_2}^2] \Rr) ^\frac 1 2.
\end{equation}
Combining \eqref{e.main.estim1}, \eqref{e.main.estim2}, and \eqref{e.main.estim3}, we deduce that
\begin{equation*}  
\liminf_{N \to \infty} \bar F_N(t,\mu) \ge f(t,\mu) -\frac{13t}{k}- \frac{C}{\sqrt{k}} \Ll(t + \Ll( \E[X_{\mu_1}^2 + X_{\mu_2}^2] \Rr) ^\frac 1 2\Rr).
\end{equation*}
Letting the integer $k\ge 1$ tend to infinity, we obtain the desired result.
\end{proof}

%
%
%
%
%
%
\section{Synchronization}
\label{s.synch}

In this section, we revisit the synchronization result of \cite{pan.multi}, see also \cite{pan.potts,pan.vec}. The structure of the reasoning presented here is similar to that in \cite{pan.multi}, and emphasizes the fundamental importance of the ultrametric structure of the Gibbs measure. There are a few differences though: one of them is that we state ``finitary'' versions of the statements; that is, the statements provide approximate criteria that the Gibbs measure may satisfy for large but finite values of $N$ and $k$; the conclusion is then that we have ``synchronization up to a small error''. A second difference between the treatment presented here and \cite{pan.multi} is in the phrasing of the synchronization property itself. In \cite{pan.multi}, this is stated as the existence of Lipschitz functions that each map the sum of the overlaps of the different species to one of the single-species overlaps. In the present section, we instead choose to phrase the synchronization of different overlaps as the statement that they are monotonically coupled.

As said above, the main powerhouse behind the synchronization result comes from the possibility to enforce the ultrametricity of the Gibbs measure. The fundamental result of~\cite{pan.aom} is that the ultrametricity property is valid as soon as the Ghirlanda-Guerra identities hold; see also the preface to \cite{pan} for a review of the series of works that preceded this final result. Moreover, as is well-known and was seen again in Section~\ref{s.serious}, these identities are valid as soon as certain random energy functions become concentrated, a property that one can ``build into the measure'' by means of a small perturbation of the energy function.

In order to emphasize that the underlying constants in the statements below do not depend on the specific Gibbs measure under consideration, we will state them for rather general measures. We start by stating a finitary version of the statement from \cite{pan.aom} that ``Ghirlanda-Guerra identities imply ultrametricity''.

\begin{theorem}[GG implies ultrametricity \cite{pan.aom}]
\label{t.gg.ultr}
For every $\ep > 0$, there exists $\de > 0$ such that the following holds. Let $G$ be a random probability measure supported on the unit ball of an arbitrary Hilbert space; denote by $\la \cdot \ra$ the expectation associated with the measure $G^{\otimes \N}$, with canonical random variables $(\si^\ell)_{\ell \ge 1}$, and define, for every $\ell,\ell', n \ge 1$,
\begin{equation*}  
R^{\ell,\ell'} := \sigma^{\ell} \cdot \sigma^{\ell'} , \quad \text{ and } \quad  R^{\le n} := \Ll( R^{\ell,\ell'} \Rr) _{1 \le \ell , \ell' \le n}.
\end{equation*}
Finally, recalling that $\la \cdot \ra$ is itself random, denote by $\E$ the expectation with respect to this additional source of randomness. Assume that, for every $n, p \in \{1,\ldots, \lfloor \de^{-1} \rfloor \}$ and $f \in C(\R^{n\times n})$ satisfying $\|f\|_{L^\infty} \le 1$,  
\begin{equation}  
\label{e.gg.delta}
\Ll| \E \la f ( R^{\le n} ) (R^{1,n+1})^p \ra  - \frac 1 n \E \la f(R^{\le n})  \ra \E \la (R^{1,2})^p \ra - \frac 1 n\sum_{\ell = 2}^n \E \la f(R^{\le n}) (R^{1,\ell})^p  \ra \Rr| \le \de.
\end{equation}
Then 
\begin{equation}  
\label{e.ultr}
\E \la \1_{\{R^{1,2} \ge \min \Ll( R^{1,3}, R^{2,3} \Rr) - \ep \}}\ra \ge 1-\ep.
\end{equation}
\end{theorem}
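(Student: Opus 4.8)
\emph{The plan} is to upgrade the non-quantitative result of \cite{pan.aom} --- namely, that a random probability measure on the unit ball of a Hilbert space whose overlaps satisfy the exact Ghirlanda--Guerra identities is ultrametric --- to the finitary form above, by a compactness argument on the space of overlap arrays. I would argue by contradiction: fix $\ep > 0$ and suppose no admissible $\de > 0$ exists. Taking $\de = 1/m$ then produces, for every $m \ge 1$, a random probability measure $G_m$ supported on the unit ball of some Hilbert space such that \eqref{e.gg.delta} holds with $\de = 1/m$ but
\begin{equation*}
\E \la \1_{\{R^{1,2} < \min(R^{1,3}, R^{2,3}) - \ep\}} \ra > \ep ,
\end{equation*}
where $\la \cdot \ra$ and $\E$ now refer to the replica and disorder averages associated with $G_m$. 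For each $m$, let $\mu_m$ denote the law, under $\E \la \cdot \ra$, of the overlap array $(R^{\ell,\ell'})_{\ell,\ell' \ge 1} \in [-1,1]^{\N \times \N}$; this is the law of a weakly exchangeable, Gram-type array with entries bounded by $1$.

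Since $\mcl P([-1,1]^{\N \times \N})$ is compact for the topology of weak convergence, I would extract a subsequence along which $\mu_m$ converges weakly to some $\mu_\infty$. Weak exchangeability, and almost-sure positive semidefiniteness of every finite principal submatrix, are closed conditions under weak convergence (by the portmanteau theorem, applied to the closed sets of $\R^{n \times n}$ on which these constraints live), so $\mu_\infty$ is again the law of a Gram--de Finetti array. By the Dovbysh--Sudakov representation (see \cite{pan}), $\mu_\infty$ is the overlap array of a random probability measure $G_\infty$ supported on the unit ball of a separable Hilbert space; a possible extra diagonal term in the representation plays no role here, since both the hypotheses \eqref{e.gg.delta} and the conclusion \eqref{e.ultr} constrain only off-diagonal overlaps.

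Next I would check that $G_\infty$ satisfies the \emph{exact} Ghirlanda--Guerra identities. For fixed $n, p$ and fixed $f \in C(\R^{n \times n})$ with $\|f\|_{L^\infty} \le 1$, the quantity inside the absolute value in \eqref{e.gg.delta} is a finite linear combination of expectations of bounded continuous functions of finitely many overlaps, hence a continuous functional of the overlap array under weak convergence; since it is at most $1/m$ whenever $m \ge \max(n,p)$, it must vanish for $\mu_\infty$. As testing against continuous $f$ for all $n$ and $p$ is equivalent, by a routine density argument (all overlaps lie in the compact interval $[-1,1]$), to the full Ghirlanda--Guerra identities, the result of \cite{pan.aom} applies and gives that $G_\infty$ is ultrametric: $R^{1,2} \ge \min(R^{1,3}, R^{2,3})$ almost surely under $\E \la \cdot \ra$. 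To conclude, I would soften the indicator: set $h(a,b,c) := \max\bigl(\min(b,c) - a,\, 0\bigr)$, which is bounded and continuous and satisfies $\{R^{1,2} < \min(R^{1,3},R^{2,3}) - \ep\} = \{h > \ep\}$. Ultrametricity of $G_\infty$ gives $h(R^{1,2}, R^{1,3}, R^{2,3}) = 0$ almost surely, hence $\E \la h \ra = 0$ for $\mu_\infty$; on the other hand the displayed inequality above reads $\E \la \1_{\{h > \ep\}} \ra > \ep$ for every $m$, and since $h \ge \ep \, \1_{\{h > \ep\}}$ pointwise we obtain $\E \la h \ra > \ep^2$ for every $m$. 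As $h$ is bounded and continuous, this passes to the weak limit to give $\E \la h \ra \ge \ep^2 > 0$ for $\mu_\infty$, a contradiction.

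\emph{The main obstacle.} The genuinely hard input is the non-quantitative theorem of \cite{pan.aom}, which we are entitled to cite; the remaining work is at the level of soft analysis. The two points that require care are: (i) identifying the weak limit $\mu_\infty$ as the overlap array of an honest random measure on a Hilbert space --- this needs the Gram structure and the exchangeability to survive the limit, together with an appeal to Dovbysh--Sudakov; and (ii) checking that the finitary identities \eqref{e.gg.delta}, whose range of validity in $(n,p)$ and whose accuracy both improve only asymptotically along the sequence, do converge to the exact Ghirlanda--Guerra identities in a form to which \cite{pan.aom} applies.
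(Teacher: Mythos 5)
Your proof is correct and follows essentially the same route as the paper's: a compactness/contradiction argument extracting a limiting overlap array that satisfies the exact Ghirlanda--Guerra identities while violating ultrametricity with probability at least $\ep$, then invoking the non-quantitative result of \cite{pan.aom}. The only difference is that you spell out two steps the paper leaves implicit (the Dovbysh--Sudakov representation of the limit array, which is not actually needed since \cite[Theorem~2.14]{pan} is formulated directly at the level of overlap arrays, and the continuous softening of the indicator to pass the ultrametricity violation through the weak limit), both of which are correct.
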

\begin{proof}
We argue by contradiction. Denote by $R := (R^{\ell,\ell'})_{\ell, \ell' \ge 1}$ the entire overlap array, and assume that Theorem~\ref{t.gg.ultr} is false: there exists $\ep > 0$ and, for each $\de > 0$ no matter how small, a random probability measure $G$ such that \eqref{e.gg.delta} holds but \eqref{e.ultr} is violated. Since each entry of $R$ takes values in $[-1,1]$, up to extraction of a subsequence, we can find a random array $\mathsf R = (\mathsf R^{\ell,\ell'})_{\ell , \ell' \ge 1}$ defined with respect to a certain probability measure $\mathbb{M}$ such that, for each integer $n \ge 1$, the law of the array $\mathsf R^{k} := (\msf R^{\ell,\ell'})_{1 \le \ell , \ell' \le n}$ under $\mathbb{M}$ is obtained as the limit law of a subsequence of overlap arrays, each violating \eqref{e.ultr} but satisfying \eqref{e.gg.delta} for a sequence of values of $\delta$ that tends to zero. In other words, the array $\msf R$ satisfies, for every integers $n,p \ge 1$ and $f \in C(\R^{n\times n})$,
\begin{equation*}  
\M \Ll[f ( \msf R^{\le n} ) (\msf R^{1,n+1})^p \Rr]  = \frac 1 n \M\Ll[ f(\msf R^{\le n})  \Rr] \M \Ll[ (\msf R^{1,2})^p \Rr] + \frac 1 n\sum_{\ell = 2}^n \M \Ll[ f(\msf R^{\le n}) (\msf R^{1,\ell})^p  \Rr] ,
\end{equation*}
as well as
\begin{equation*}  
\M \Ll[ R^{1,2} \le \min \Ll( R^{1,3}, R^{2,3} \Rr) - \ep \Rr] \ge \ep.
\end{equation*}
This was shown to be impossible in \cite{pan.aom}, see also \cite[Theorem~2.14]{pan}.
\end{proof}

In order to prepare the ground for synchronization statements, we clarify the notion of monotone coupling in the next proposition.

\begin{proposition}[Monotone coupling]
\label{p.mono.coupl}
Let $(X,Y)$ be a random vector taking values in~$\R^2$, and let $(X',Y')$ be an independent copy of this vector, defined under the probability measure $\P$. The following three statements are equivalent.

(1) We have
\begin{equation}  
\label{e.mono.coupl.ass}
\P \Ll[ X < X' \text{ and } \  Y' < Y \Rr] = 0.
\end{equation}

(2) For every $x, y \in \R$, we have
\begin{equation}  
\label{e.mono.coupl}
\P \Ll[ X \le x \text{ and } Y \le y \Rr] = \min \Ll( \P \Ll[ X \le x \Rr] , \P \Ll[ Y \le y \Rr]  \Rr) .
\end{equation}

(3) The law of $(X,Y)$ is 
\begin{equation*}  
(F_X^{-1}, F_Y^{-1}) \Ll( \mathrm{Leb}_{ \Ll[ 0,1 \Rr] } \Rr) ,
\end{equation*}
that is, the law of $(X,Y)$ is the image of the Lebesgue measure over $[0,1]$ under the mapping $r \mapsto (F_X^{-1}(r), F_Y^{-1}(r))$, where, for every $r \in [0,1]$,
\begin{equation}  
\label{e.cdfinv}
F_X^{-1}(r) := \inf \Ll\{ s \in \R \ : \ \P \Ll[ X \le s \Rr] \ge r \Rr\} ,
\end{equation}
and similarly with $X$ replaced by $Y$. 
\end{proposition}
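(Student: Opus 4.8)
The plan is to prove the equivalences $(1)\iff(2)$ and $(2)\iff(3)$ by elementary measure-theoretic arguments, since all three statements are purely about the joint law of $(X,Y)$.

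\textbf{Proof of $(1)\Rightarrow(2)$.}
Fix $x,y\in\R$. By symmetry of the roles of $(X,Y)$ and $(X',Y')$, condition \eqref{e.mono.coupl.ass} is equivalent to saying that the event $\{X<X',\ Y>Y'\}$ has probability zero, i.e.\ (exchanging the primed and unprimed copies) $\P[X'<X,\ Y'>Y]=0$ as well. I would first argue that
\[
\P\Ll[X\le x\text{ and }Y>y\text{ and }X'>x\text{ and }Y'\le y\Rr]=0,
\]
because on this event one has $X\le x<X'$ and $Y'\le y<Y$, contradicting \eqref{e.mono.coupl.ass}. Since $(X,Y)$ and $(X',Y')$ are independent with the same law, the left side factors as $\P[X\le x,\ Y>y]\cdot\P[X>x,\ Y\le y]$. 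Hence at least one of the two factors vanishes. Now observe that $\P[X\le x,\ Y\le y]=\P[X\le x]-\P[X\le x,\ Y>y]$ and $\P[X\le x,\ Y\le y]=\P[Y\le y]-\P[X>x,\ Y\le y]$; so one of these two displays reduces to $\P[X\le x,\ Y\le y]=\P[X\le x]$ and the other to $\P[X\le x,\ Y\le y]=\P[Y\le y]$. Since $\P[X\le x,\ Y\le y]$ is bounded above by both $\P[X\le x]$ and $\P[Y\le y]$, it must equal their minimum, giving \eqref{e.mono.coupl}.

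\textbf{Proof of $(2)\Rightarrow(1)$.}
Assuming \eqref{e.mono.coupl} for all $x,y$, I would show $\P[X<X',\ Y'<Y]=0$ by a covering argument. For rationals $p<q$, consider the event $A_{p,q}:=\{X\le p,\ X'\ge q,\ Y'\le p,\ Y\ge q\}$; the event $\{X<X',\ Y'<Y\}$ is contained in the countable union $\bigcup_{p<q\in\Q}A_{p,q}$ (insert rationals between $X$ and $X'$, and between $Y'$ and $Y$; note $X<X'$ forces $Y'<Y$ so we may choose a single pair $p<q$ straddling both gaps only if the gaps overlap — more carefully, take $p<q$ with $X\le p<q\le X'$ and independently $p'<q'$ with $Y'\le p'<q'\le Y$, then take a common refinement). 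It therefore suffices to show $\P[A_{p,q}]=0$. By independence, $\P[A_{p,q}]=\P[X\le p,\ Y\ge q]\cdot\P[X'\ge q,\ Y'\le p]=\P[X\le p,\ Y\ge q]\cdot\P[X\ge q,\ Y\le p]$. Now \eqref{e.mono.coupl} with $(x,y)=(p,q)$ — after rewriting $\P[X\le p,\ Y\ge q]=\P[X\le p]-\P[X\le p,\ Y< q]$ and using $\P[X\le p,\ Y<q]=\min(\P[X\le p],\P[Y<q])$ (which follows from \eqref{e.mono.coupl} by monotone limits in $y$) — shows $\P[X\le p,\ Y\ge q]=(\P[X\le p]-\P[Y<q])_+$; symmetrically $\P[X\ge q,\ Y\le p]=(\P[Y\le p]-\P[X<q])_+$. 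Since $p<q$ gives $\P[X\le p]\le\P[X<q]$ and $\P[Y\le p]\le\P[Y<q]$, the two factors are $(\P[X\le p]-\P[Y<q])_+$ and $(\P[Y\le p]-\P[X<q])_+$, and a short case check shows they cannot both be strictly positive: the first being positive forces $\P[X\le p]>\P[Y<q]\ge\P[Y\le p]$, while the second being positive forces $\P[Y\le p]>\P[X\le p]$, a contradiction. Hence $\P[A_{p,q}]=0$.

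\textbf{Proof of $(2)\iff(3)$.}
For $(3)\Rightarrow(2)$: if $(X,Y)$ has the law of $(F_X^{-1}(U),F_Y^{-1}(U))$ for $U$ uniform on $[0,1]$, then by the standard identity $\{F_X^{-1}(U)\le x\}=\{U\le F_X(x)\}$ (valid with $F_X(x):=\P[X\le x]$, using right-continuity of $F_X$), the joint event $\{X\le x,\ Y\le y\}$ becomes $\{U\le F_X(x)\}\cap\{U\le F_Y(y)\}=\{U\le\min(F_X(x),F_Y(y))\}$, of probability $\min(F_X(x),F_Y(y))$, which is \eqref{e.mono.coupl} since the marginals of $(F_X^{-1}(U),F_Y^{-1}(U))$ are indeed those of $X$ and $Y$. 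For $(2)\Rightarrow(3)$: I would take $U:=F_X(X)$ when $F_X$ is continuous at $X$, and in general appeal to the well-known fact that on a suitable enlargement one can find $U$ uniform on $[0,1]$ with $X=F_X^{-1}(U)$ a.s.\ and simultaneously $Y=F_Y^{-1}(U)$ a.s., precisely because \eqref{e.mono.coupl} says the joint distribution function equals that of the comonotone coupling, and a probability measure on $\R^2$ is determined by its distribution function. Concretely, it suffices to check that the measure $(F_X^{-1},F_Y^{-1})(\mathrm{Leb}_{[0,1]})$ has distribution function $(x,y)\mapsto\min(F_X(x),F_Y(y))$ — which was just done in the $(3)\Rightarrow(2)$ direction — and then invoke uniqueness of a Borel probability measure on $\R^2$ given its distribution function.

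\textbf{Main obstacle.}
The computations are all routine; the one point requiring genuine care is the covering argument in $(2)\Rightarrow(1)$, namely verifying that $\{X<X',\ Y'<Y\}$ is contained in a \emph{countable} union of product events $A_{p,q}$ to which independence and \eqref{e.mono.coupl} can be applied, and handling the edge cases in the sign analysis of the two factors. Once that bookkeeping is set up, every step is a one-line consequence of monotonicity of distribution functions and the definition \eqref{e.cdfinv} of the generalized inverse.
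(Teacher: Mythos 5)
Your overall strategy differs from the paper's in one place: you prove $(2)\Rightarrow(1)$ directly by a covering argument, whereas the paper never proves this implication directly. Instead, the paper proves $(3)\Rightarrow(1)$ in two lines (if $U\le U'$ then monotonicity of the generalized inverses gives $X\le X'$ and $Y\le Y'$, so the bad event cannot occur for either ordering of $U,U'$), and then obtains $(2)\Rightarrow(1)$ for free by composing $(2)\Rightarrow(3)\Rightarrow(1)$, where $(2)\Rightarrow(3)$ is the uniqueness-of-law step you also use. Your $(1)\Rightarrow(2)$ and your $(2)\Leftrightarrow(3)$ are essentially identical in content to the paper's.

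The one genuine problem is in your covering argument for $(2)\Rightarrow(1)$. The claim that $\{X<X',\ Y'<Y\}\subset\bigcup_{p<q\in\Q}A_{p,q}$ with $A_{p,q}=\{X\le p,\ X'\ge q,\ Y'\le p,\ Y\ge q\}$ is false: on this event one needs $\max(X,Y')\le p<q\le\min(X',Y)$, but the intervals $(X,X')$ and $(Y',Y)$ may be disjoint (e.g.\ $X=0$, $X'=1$, $Y'=10$, $Y=11$). You flag this yourself and suggest "taking a common refinement," but that does not repair the inclusion; what is actually needed is a cover by events $A_{p,q,p',q'}=\{X\le p,\ X'\ge q,\ Y'\le p',\ Y\ge q'\}$ with two independent rational pairs $p<q$ and $p'<q'$. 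Fortunately your sign analysis still goes through with four parameters: positivity of $\P[X\le p,\ Y\ge q']$ gives $\P[X\le p]>\P[Y<q']$, positivity of $\P[X\ge q,\ Y\le p']$ gives $\P[Y\le p']>\P[X<q]$, and chaining $\P[X\le p]>\P[Y<q']\ge\P[Y\le p']>\P[X<q]\ge\P[X\le p]$ is a contradiction. So the route is viable, but as written the cover is wrong and the fix is not the one you propose; the paper's indirect route via $(3)\Rightarrow(1)$ is notably shorter and avoids this bookkeeping entirely.
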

Whenever any of the conditions (1-3) appearing in Proposition~\ref{p.mono.coupl} holds, we say that the random variables $X$ and $Y$ are \emph{monotonically coupled}.
\begin{proof}
We first show that (1) implies (2). 
The statement \eqref{e.mono.coupl} with the equality sign replaced by ``$\, \le \,$'' is clear. To show the converse inequality, we argue by contradiction and assume that there exist $x,y \in \R$ such that
\begin{equation*}  
\P \Ll[ X \le x \text{ and } Y \le y \Rr] < \min \Ll( \P \Ll[ X \le x \Rr] , \P \Ll[ Y \le y \Rr]  \Rr) .
\end{equation*}
It follows that 
\begin{equation*}  
\P \Ll[ X \le x \text{ and } Y > y \Rr] = \P[X \le x] - \P[X \le x \text{ and } Y \le y]> 0,
\end{equation*}
and similarly,
\begin{equation*}  
\P \Ll[ X > x \text{ and } Y \le y \Rr] = \P [Y \le y] - \P[X \le x \text{ and } Y \le y] > 0.
\end{equation*}
In particular,
\begin{equation*}  
\P \Ll[ X \le x < X' \text{ and } Y' \le y < Y \Rr]  > 0.
\end{equation*}
This contradicts \eqref{e.mono.coupl.ass}.

We now show that (3) implies (1). Let $U$ and $U'$ be two independent random variables distributed uniformly over $[0,1]$. We can realize $(X,Y)$ and $(X',Y')$ by setting
\begin{equation*}  
(X,Y) = (F_X^{-1}, F_Y^{-1})(U) \quad \text{and} \quad (X',Y') = (F_X^{-1}, F_Y^{-1})(U').
\end{equation*}
For definiteness, suppose that $U \le U'$. Since $F_X^{-1}$ and $F_Y^{-1}$ are increasing (in the sense of wide inequalities), it then implies that $X \le X'$ and $Y \le Y'$. This shows that property (1) holds.

Summarizing, we have shown that (1) implies (2) and (3) implies (1). In particular, (3) implies (2). Since there is at most one joint law for $(X,Y)$ that satisfies (2), we deduce that (2) and (3) are equivalent. The proof is thus complete.
\end{proof}

We now turn to our variant of the main result of \cite{pan.multi}, which states that approximate Ghirlanda-Guerra identities imply approximate synchronization, in the sense of monotone couplings between overlaps.

\begin{theorem}[Synchronization]
\label{t.sync}
Let $(\lambda_{n})_{n \ge 1}$ be an enumeration of the set of rational numbers in $[0,1]$. 
For every $\ep > 0$, there exists $\de > 0$ such that the following holds. 
Let $G$ be a random probability measure supported on the Cartesian product of the unit balls of two arbitrary Hilbert spaces; denote by $\la \cdot \ra$ the expectation associated with the measure $G^{\otimes \N}$, with canonical random variables $\Ll(\si^\ell = (\si^\ell_1, \si^\ell_2)\Rr)_{\ell \ge 1}$, and define, for every $a \in \{1,2\}$ and $\ell,\ell', n \ge 1$,
\begin{equation*}  
R_a^{\ell,\ell'} := \sigma_a^{\ell} \cdot \sigma_a^{\ell'} , \quad \text{ and } \quad  R^{\le n} := \Ll( R_a^{\ell,\ell'} \Rr) _{a \in \{1,2\}, 1 \le \ell , \ell' \le n}.
\end{equation*}
Finally, recalling that $\la \cdot \ra$ is itself random, denote by $\E$ the expectation with respect to this additional source of randomness. Assume that, for every $n,h_1, h_2, p \in \{1,\ldots, \lfloor \de^{-1} \rfloor \}$ and $f \in C(\R^{2\times n\times n})$ satisfying $\|f\|_{L^\infty} \le 1$,  
\begin{multline}  
\label{e.gg2.delta}
\bigg| \E \la f ( R^{\le n} ) \Ll(\lambda_{h_1} R_1^{1,n+1}+ \lambda_{h_2} R_2^{1,n+1}\Rr)^p \ra  \\
- \frac 1 n \E \la f(R^{\le n})  \ra \E \la \Ll(\lambda_{h_1} R_1^{1,2}+ \lambda_{h_2} R_2^{1,2}\Rr)^p \ra - \frac 1 n\sum_{\ell = 2}^n \E \la f(R^{\le n}) \Ll(\lambda_{h_1} R_1^{1,\ell}+ \lambda_{h_2} R_2^{1,\ell}\Rr)^p  \ra \bigg| \le \de.
\end{multline}
Then, for every $f \in C^\infty(\R^2)$,
\begin{equation}  
\label{e.sync}
\Ll| \E \la f(R^{1,2}_1, R^{1,2}_2) \ra - \E \Ll[ f\Ll(F_1^{-1}(U), F_2^{-1}(U) \Rr)\Rr]  \Rr| \le \ep \Ll( \|f\|_{L^\infty} + \|\nabla f\|_{L^\infty} \Rr) ,
\end{equation}
where $U$ stands for a uniform random variable over $[0,1]$, and, for every $a \in \{1,2\}$ and $r \in [0,1]$, we write
\begin{equation}  
\label{e.cdfinv.a}
F_a^{-1} (r) := \inf \Ll\{ s \in \R \ : \ \E \la \1_{\{ R^{1,2}_a \le s\}} \ra \ge r \Rr\}.
\end{equation}
\end{theorem}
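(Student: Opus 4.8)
The plan is to argue by contradiction and compactness, in the spirit of the proof of Theorem~\ref{t.gg.ultr}, reducing the assertion to an \emph{exact} synchronization statement for a limiting overlap array, and then to establish that statement by revisiting the ultrametricity argument of \cite{pan.multi}.

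\textbf{Step 1: reduction to an exact statement.} Suppose the conclusion fails: there is $\ep_0 > 0$ such that for every $m \ge 1$ one can find a random measure $G_m$ satisfying \eqref{e.gg2.delta} with $\de = 1/m$, together with some $f_m \in C^\infty(\R^2)$, which after rescaling we may assume satisfies $\|f_m\|_{L^\infty} + \|\nabla f_m\|_{L^\infty} = 1$, for which the left side of \eqref{e.sync} exceeds $\ep_0$. All overlaps $R^{\ell,\ell'}_a$ take values in $[-1,1]$, so the $f_m$ are uniformly bounded and uniformly Lipschitz there; passing to a subsequence, $f_m \to f_\infty$ uniformly on $[-1,1]^2$ (Arzel\`a--Ascoli), and simultaneously the joint laws of the overlap arrays converge to those of a limiting exchangeable array $\msf R = (\msf R^{\ell,\ell'}_a)$ under a probability measure $\M$ (tightness of arrays with entries in a compact set, Kolmogorov extension). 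Letting $\de = 1/m \to 0$ in \eqref{e.gg2.delta}, and using that the expressions there are continuous in the bounded array, we obtain that $\msf R$ satisfies the exact Ghirlanda--Guerra identities: for all integers $n,p \ge 1$, all $f \in C(\R^{2\times n\times n})$, and all rational $\lambda_1,\lambda_2 \in [0,1]$,
\begin{multline}
\label{e.gg.limit}
\M \Ll[ f(\msf R^{\le n}) \Ll( \lambda_1 \msf R_1^{1,n+1} + \lambda_2 \msf R_2^{1,n+1} \Rr)^p \Rr]
= \frac 1 n \M \Ll[ f(\msf R^{\le n}) \Rr] \, \M \Ll[ \Ll( \lambda_1 \msf R_1^{1,2} + \lambda_2 \msf R_2^{1,2} \Rr)^p \Rr]
\\
+ \frac 1 n \sum_{\ell = 2}^n \M \Ll[ f(\msf R^{\le n}) \Ll( \lambda_1 \msf R_1^{1,\ell} + \lambda_2 \msf R_2^{1,\ell} \Rr)^p \Rr];
\end{multline}
by continuity in $(\lambda_1,\lambda_2)$ this persists for all real $\lambda_1,\lambda_2 \in [0,1]$. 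Since the one-dimensional marginals of $\msf R^{1,2}_a$ under $\M$ are the weak limits of those of $R^{1,2}_a$, the corresponding quantile functions converge Lebesgue-a.e.\ on $[0,1]$; combined with $f_m \to f_\infty$ uniformly and dominated convergence, this yields $\bigl| \M [ f_\infty(\msf R^{1,2}_1, \msf R^{1,2}_2) ] - \E [ f_\infty(\msf F_1^{-1}(U), \msf F_2^{-1}(U)) ] \bigr| \ge \ep_0$, where $\msf F_a$ is the cumulative distribution function of $\msf R^{1,2}_a$ under $\M$. It therefore suffices to prove the exact statement: \eqref{e.gg.limit} for all $\lambda \in [0,1]^2$ forces the law of $(\msf R^{1,2}_1, \msf R^{1,2}_2)$ under $\M$ to be the monotone coupling of its marginals, which contradicts the previous display in view of Proposition~\ref{p.mono.coupl}.

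\textbf{Step 2: ultrametricity of all combined overlaps.} For $\lambda = (\lambda_1,\lambda_2) \in [0,1]^2$ set $\msf R_\lambda^{\ell,\ell'} := \lambda_1 \msf R_1^{\ell,\ell'} + \lambda_2 \msf R_2^{\ell,\ell'}$. Restricting the test functions in \eqref{e.gg.limit} to depend only on $(\msf R_\lambda^{\ell,\ell'})_{1\le\ell,\ell'\le n}$ shows that $\msf R_\lambda$ satisfies the exact Ghirlanda--Guerra identities in the sense of \eqref{e.gg.delta} (with $\de = 0$), so by the argument underlying Theorem~\ref{t.gg.ultr} (namely \cite{pan.aom}), applied with $\ep \downarrow 0$, the array $\msf R_\lambda$ is ultrametric: $\M$-a.s., for every triple of indices, the two smallest of $\msf R_\lambda^{1,2}, \msf R_\lambda^{1,3}, \msf R_\lambda^{2,3}$ are equal. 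Since this holds for every rational $\lambda$ on a single full-measure event, and $\lambda \mapsto \msf R_\lambda^{\ell,\ell'}$ is continuous, it holds $\M$-a.s.\ simultaneously for all $\lambda \in [0,1]^2$.

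\textbf{Step 3: from ultrametricity to the monotone coupling.} Fix the base replica $1$. I claim $\M$-a.s.\ that $(\msf R_1^{1,\ell'} - \msf R_1^{1,\ell''})(\msf R_2^{1,\ell'} - \msf R_2^{1,\ell''}) \ge 0$ for all $\ell', \ell''$. Indeed, if $\msf R_1^{1,2} > \msf R_1^{1,3}$ while $\msf R_2^{1,2} < \msf R_2^{1,3}$ on a set of positive measure, then ultrametricity of $\msf R_1$ on $\{1,2,3\}$ forces $\msf R_1^{2,3} = \msf R_1^{1,3}$, and ultrametricity of $\msf R_2$ forces $\msf R_2^{2,3} = \msf R_2^{1,2}$; but then for $\lambda = (\tfrac12,\tfrac12)$ one has $\msf R_\lambda^{2,3} = \tfrac12\msf R_1^{1,3} + \tfrac12\msf R_2^{1,2}$, which is strictly smaller than both $\msf R_\lambda^{1,2} = \tfrac12\msf R_1^{1,2} + \tfrac12\msf R_2^{1,2}$ and $\msf R_\lambda^{1,3} = \tfrac12\msf R_1^{1,3} + \tfrac12\msf R_2^{1,3}$, contradicting ultrametricity of $\msf R_\lambda$. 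Relabelling $2 \leftrightarrow 3$ excludes the opposite strict inequality, proving the claim. It follows that, for each base replica, $\msf R_1^{1,\ell'}$ is a (genuine) non-decreasing function of $S^{1,\ell'} := \msf R_1^{1,\ell'} + \msf R_2^{1,\ell'}$, and likewise $\msf R_2^{1,\ell'}$. Invoking the rigidity of the overlap distribution under the Ghirlanda--Guerra identities — which forces, $\M$-a.s., the conditional laws given the sampling measure of $(\msf R_1^{1,2},\msf R_2^{1,2})$ and of $S^{1,2}$ to be fixed, non-random distributions — one concludes, as in \cite{pan.multi} (see also \cite{pan.potts,pan.vec}), that there is a single non-random non-decreasing $1$-Lipschitz function $\phi$ with $\msf R_1^{\ell,\ell'} = \phi(S^{\ell,\ell'})$ and $\msf R_2^{\ell,\ell'} = S^{\ell,\ell'} - \phi(S^{\ell,\ell'})$ $\M$-a.s.\ for all $\ell,\ell'$. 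Since $r\mapsto\phi(r)$ and $r\mapsto r-\phi(r)$ are both non-decreasing, $(\msf R_1^{1,2},\msf R_2^{1,2})$ is a deterministic non-decreasing image of the single random variable $S^{1,2}$, so its law under $\M$ equals the monotone coupling $(\msf F_1^{-1},\msf F_2^{-1})(\mathrm{Leb}_{[0,1]})$ of its marginals (Proposition~\ref{p.mono.coupl}(3)). This contradicts Step~1 and completes the proof.

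\textbf{Main obstacle.} The comonotonicity of the single-species overlaps sharing a base replica is short and is the easy half. The genuine difficulty is the last deduction in Step~3: upgrading this to a \emph{single, non-random} synchronizing function $\phi$. This requires gluing the per-base-replica functions and ruling out residual dependence on the sampling measure, which rests on the rigidity of the overlap distribution under the Ghirlanda--Guerra identities; this is the technical core imported from \cite{pan.multi}, and the rest of Section~\ref{s.synch} is devoted to making it quantitative.
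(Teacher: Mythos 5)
Your Steps 1 and 2 track the paper's approach: a compactness argument by contradiction reduces to a limiting exchangeable array $\msf R$ satisfying the exact Ghirlanda--Guerra identities, together with the continuity of the quantile-coupling map in law (Lemma~\ref{l.continuity.mono.coupl} in the paper). You reach this via Arzel\`a--Ascoli on the test functions $f_m$, while the paper instead proves directly (Step~1 of its proof) that the metric $\|\mu-\nu\|=\sup\{\int f\,\d\mu-\int f\,\d\nu : \|f\|_{L^\infty}+\|\nabla f\|_{L^\infty}\le 1\}$ is continuous for weak convergence, via mollification by polynomial kernels. These are interchangeable. The ultrametricity of $\msf R_\lambda$ for various $\lambda$, and the comonotonicity $(\msf R_1^{1,\ell'}-\msf R_1^{1,\ell''})(\msf R_2^{1,\ell'}-\msf R_2^{1,\ell''}) \ge 0$ that you derive from it, are exactly the paper's ingredients (the paper only uses $\lambda \in \{(1,0),(0,1),(1,1)\}$, but this is inessential).

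The genuine gap is the final passage of your Step~3, which you flag yourself but misdiagnose. What Proposition~\ref{p.mono.coupl}(1) actually requires is
\begin{equation*}
\M\Ll[\msf R_1^{1,2} < \td{\msf R}_1^{1,2} \ \text{and} \ \td{\msf R}_2^{1,2} < \msf R_2^{1,2}\Rr] = 0,
\end{equation*}
where $\td{\msf R}^{1,2}$ is an \emph{independent} copy of $\msf R^{1,2}$ under the averaged law $\M$. Your comonotonicity claim controls $(\msf R^{1,2}, \msf R^{1,3})$, which is \emph{not} an independent pair under $\M$ (replicas $2$ and $3$ are exchangeable but not $\M$-independent). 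To bridge this, you invoke the full synchronization machinery of \cite{pan.multi} to construct a single non-random synchronizing function $\phi$ and then read off the monotone coupling. This would work, but it is a black box you do not open, and you declare it "the technical core" of the section.

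This is the point you miss: the paper bridges the gap with a one-line trick that bypasses the synchronizing function entirely. By the limiting Ghirlanda--Guerra identities (applied with $n=2$ and general rational $\lambda$, upgraded by Cram\'er--Wold and density to general test functions), conditionally on $\msf R^{\le 2}$ the law of $\msf R^{1,3}$ is $\tfrac12\,\mathrm{Law}(\msf R^{1,2}) + \tfrac12\,\de_{\msf R^{1,2}}$. Integrating the indicator of your target event against this conditional law, and noting the diagonal term vanishes, gives the exact identity
\begin{equation*}
2\,\M\Ll[\msf R_1^{1,2} < \msf R_1^{1,3} \ \text{and} \ \msf R_2^{1,3} < \msf R_2^{1,2}\Rr]
= \M\Ll[\msf R_1^{1,2} < \td{\msf R}_1^{1,2} \ \text{and} \ \td{\msf R}_2^{1,2} < \msf R_2^{1,2}\Rr].
\end{equation*}
The left side is precisely what your ultrametricity argument kills, so the right side is zero, and Proposition~\ref{p.mono.coupl} concludes. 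This makes the step self-contained and short, and shows that the construction of $\phi$ is not needed; it is, if anything, the \emph{output} of the monotone-coupling statement rather than a prerequisite. You should replace your appeal to \cite{pan.multi} with this conditional-law identity.
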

The proof of Theorem~\ref{t.sync} makes use of the following lemma, asserting  that if two sequences of random variables converge in law separately, then their monotone coupling converges in law as well. 
\begin{lemma}[continuity of monotone coupling]
\label{l.continuity.mono.coupl}
Let $(X_n), (Y_n)$ be two sequences of random variables which converge in law to $X$ and $Y$ respectively. Then the associated monotone couplings converge: using the notation in \eqref{e.cdfinv}, and with $U$ a uniform random variable over $[0,1]$, we have
\begin{equation}  
\label{e.continuity.mono.coupl}
\Ll(F_{X_n}^{-1}(U), F_{Y_n}^{-1}(U)\Rr) \xrightarrow[n \to \infty]{\text{(law)}} \Ll(F_{X}^{-1}(U), F_{Y}^{-1}(U)\Rr).
\end{equation}
\end{lemma}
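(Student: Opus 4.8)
The plan is to prove Lemma~\ref{l.continuity.mono.coupl} by reducing convergence in law of the coupled pair to pointwise convergence of the generalized inverse functions. First I would recall that convergence in law of $(X_n)$ to $X$ is equivalent to the statement that the cumulative distribution functions $F_{X_n}$ converge to $F_X$ at every continuity point of $F_X$. A standard fact (see e.g.\ the Skorokhod representation or a direct argument) is that this in turn implies that the generalized inverses $F_{X_n}^{-1}$ converge to $F_X^{-1}$ at every point $r \in (0,1)$ that is a continuity point of $F_X^{-1}$, and the set of such $r$ is all of $(0,1)$ except at most countably many points. The same holds simultaneously for $F_{Y_n}^{-1} \to F_Y^{-1}$ off a countable set. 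Hence, letting $N \subset (0,1)$ be the (countable, thus Lebesgue-null) union of the two exceptional sets, we have
\begin{equation*}
\Ll(F_{X_n}^{-1}(r), F_{Y_n}^{-1}(r)\Rr) \xrightarrow[n\to\infty]{} \Ll(F_X^{-1}(r), F_Y^{-1}(r)\Rr) \qquad \text{for every } r \in (0,1) \setminus N.
\end{equation*}

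Next I would upgrade this pointwise (Lebesgue-almost everywhere) convergence to convergence in law of the pair. Since $U$ is uniform on $[0,1]$, it assigns zero mass to $N$, so almost surely $\Ll(F_{X_n}^{-1}(U), F_{Y_n}^{-1}(U)\Rr) \to \Ll(F_X^{-1}(U), F_Y^{-1}(U)\Rr)$. Almost sure convergence implies convergence in law, which is exactly \eqref{e.continuity.mono.coupl}. This is arguably the cleanest route: one never needs to manipulate the joint distributions directly, because the monotone coupling is by construction a deterministic (measurable) image of a single uniform variable, so almost everywhere convergence of that image map does all the work.

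The one point requiring a little care — and the main (mild) obstacle — is the justification that $F_{X_n} \to F_X$ pointwise on continuity points forces $F_{X_n}^{-1} \to F_X^{-1}$ pointwise on continuity points of $F_X^{-1}$. The argument is elementary: fix $r$ a continuity point of $F_X^{-1}$ and $\eta > 0$; choose continuity points $s_-, s_+$ of $F_X$ with $F_X^{-1}(r) - \eta < s_- < F_X^{-1}(r) < s_+ < F_X^{-1}(r) + \eta$ and with $F_X(s_-) < r < F_X(s_+)$ (possible since $r$ is a continuity point of $F_X^{-1}$, which rules out flat stretches of $F_X$ straddling level $r$); then for $n$ large, $F_{X_n}(s_-) < r \le F_{X_n}(s_+)$, whence $s_- \le F_{X_n}^{-1}(r) \le s_+$, giving $|F_{X_n}^{-1}(r) - F_X^{-1}(r)| < \eta$. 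Since continuity points of $F_X$ are dense and $F_X^{-1}$ has at most countably many discontinuities, this covers all but countably many $r$. Everything else is routine, so I would state the reduction, invoke this inverse-convergence fact (or cite it), observe $\mathrm{Leb}(N) = 0$, and conclude via almost sure convergence.
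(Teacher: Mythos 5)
Your proof is correct, but it takes a different (and arguably more direct) route than the paper. The paper's proof first observes coordinate-wise convergence in law, then appeals to tightness to extract a subsequential weak limit $(A,B)$ of the joint laws, shows that this limit still satisfies condition (1) of Proposition~\ref{p.mono.coupl} (the event $\{A < A' \text{ and } B' < B\}$ is open, so its probability can only decrease under weak limits), and concludes by the uniqueness part of Proposition~\ref{p.mono.coupl}. You instead argue directly: convergence in law implies pointwise convergence of the quantile functions $F_{X_n}^{-1} \to F_X^{-1}$ and $F_{Y_n}^{-1} \to F_Y^{-1}$ off a countable set, hence almost-sure convergence of $(F_{X_n}^{-1}(U), F_{Y_n}^{-1}(U))$, hence convergence in law. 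Your route is self-contained and avoids the detour through subsequences and the characterization lemma, at the cost of having to prove (or cite) the pointwise convergence of generalized inverses; the paper's route reuses Proposition~\ref{p.mono.coupl} and is thus consistent with the emphasis on the monotone-coupling characterization elsewhere in Section~\ref{s.synch}. Your treatment of the one delicate point --- why a continuity point $r$ of $F_X^{-1}$ admits a continuity point $s_+ > F_X^{-1}(r)$ of $F_X$ with $F_X(s_+) > r$, i.e.\ why there is no flat stretch of $F_X$ at level $r$ to the right of $F_X^{-1}(r)$ --- is handled correctly. Both proofs are fine.
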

\begin{proof}
Since the law of $F_{X_n}^{-1}(U)$ is that of $X_n$, it is clear that the convergence in \eqref{e.continuity.mono.coupl} holds for each coordinate separately. Up to the extraction of a subsequence, we can assume that $\Ll(F_{X_n}^{-1}(U), F_{Y_n}^{-1}(U)\Rr)$ converges in law to some random vector $(A,B)$; we denote by $(A',B')$ an independent copy of this vector. By classical properties of convergence in law and Proposition~\ref{p.mono.coupl}, we infer that 
\begin{equation*}  
\P \Ll[ A < A' \text{ and } B' < B \Rr] = 0.
\end{equation*}
We conclude using Proposition~\ref{p.mono.coupl} once more.
\end{proof}
\begin{proof}[Proof of Theorem~\ref{t.sync}]

\emph{Step 1.} For any two probability measures $\mu,\nu$ on $[-1,1]^2$, we define
\begin{equation*}  
\|\mu - \nu\| := \sup \Ll\{ \int f \, \d \mu - \int f \, \d \nu \ : \ \|f\|_{L^{\infty}} + \|\nabla f\|_{L^\infty} \le 1 \Rr\} .
\end{equation*}
In this step, we show that the quantity above, as a function of $(\mu,\nu)$, is continuous for the topology of weak convergence. In other words, if a sequence of probability measures $\mu_n$ over $[-1,1]^2$ converges weakly to $\mu$, then $\|\mu_n - \nu\|$ converges to~$\|\mu - \nu\|$. For every integer $k \ge 1$ and $x \in \R^2$, define
\begin{equation*}  
P_k(x) :=  c_k\Ll( 1 - \frac{|x|^2}{16} \Rr) ^k,
\end{equation*}
where the constant $c_k$ is such that $\int_{\Ll[-2,2\Rr]^2} P_k = 1$. Let $f \in C^\infty([-1,1]^2)$ be such that $\|f\|_{L^\infty} + \|\nabla f\|_{L^\infty} \le 1$. We may extend $f$ to a Lipschitz function on $\R^2$ such that $\|f\|_{L^\infty} + \|\nabla f\|_{L^\infty} \le 2$. 
Denoting the spatial convolution by $\ast$, we have, for every $x \in [-1,1]^2$,
\begin{align*}  
(f - f \ast P_k)(x) 
& = \int_{\Ll[-2,2\Rr]^2} (f(x) - f(x-y)) P_k(y) \, \d y
\\
& = \int_{\Ll[-2,2\Rr]^2} \int_0^1 y \cdot \nabla f(x - ty) P_k(y) \, \d t \, \d y,
\end{align*}
so
\begin{equation*}  
\|f - f \ast P_k\|_{L^\infty} \le 2 \int_{\Ll[-2,2\Rr]^2} |y| \, P_k(y) \, \d y ,
\end{equation*}
and the latter quantity tends to $0$ as $k$ tends to infinity (uniformly over $f$). On the other hand, $f \ast P_k$ is a polynomial of degree at most $2k$ and, and for each fixed $k$, the coefficients of this polynomial can be bounded in terms of $\|f\|_{L^\infty}$. In particular, for each fixed $k$, we have
\begin{equation*}  
\sup \Ll\{ \int f \ast P_k \, \d \mu_n - \int f \ast P_k \, \d \mu \ : \  \|f\|_{L^\infty} + \|\nabla f\|_{L^\infty} \le 1  \Rr\} \xrightarrow[n \to \infty]{} 0.
\end{equation*}
Combining these two facts gives the announced continuity result.

\emph{Step 2.} We need to show that, provided that $\de > 0$ is chosen sufficiently small in terms of $\ep$, we have
\begin{equation}  
\label{e.dist.laws}
\Ll\|\mathrm{Law}(R_1^{1,2}, R_2^{1,2}) - (F_1^{-1}, F_2^{-1})\Ll( \mathrm{Leb}_{ \Ll[ 0,1 \Rr] } \Rr) \Rr\| \le \ep.
\end{equation}
In the expression above, we denote by $\mathrm{Law}(R_1^{1,2}, R_2^{1,2})$ the law of $(R_1^{1,2}, R_2^{1,2})$ under the measure $\E \la \cdot \ra$.
Assuming the contrary, there exist $\ep > 0$ and, for $\de > 0$ as small as desired, an overlap distribution satisfying \eqref{e.gg2.delta} but not \eqref{e.dist.laws}. Up to extraction of a subsequence, we can assume that the overlap array converges in law to a limit random overlap $\msf R$, whose law we denote by $\M$. By Lemma~\ref{l.continuity.mono.coupl} and the result of the previous step, we infer that
\begin{equation*}  
\Ll\|\mathrm{Law}(\msf R_1^{1,2}, \msf R_2^{1,2}) - (F_1^{-1}, F_2^{-1})\Ll( \mathrm{Leb}_{ \Ll[ 0,1 \Rr] } \Rr) \Rr\| \ge \ep,
\end{equation*}
where in the expression above, $F_1^{-1}$ and $F_2^{-1}$ now stand for the inverse cumulative distribution functions of $\msf R_1^{1,2}$ and $\msf R_2^{1,2}$ respectively (that is, we replace $\E \la \1_{\{ R^{1,2}_a \le s\}} \ra $ by $\M \Ll[\1_{\{ \msf R^{1,2}_a \le s\}}\Rr]$ in \eqref{e.cdfinv.a}). In particular, the random variables $\msf R^{1,2}_1$ and $\msf R^{1,2}_2$ are not monotonically coupled.

\emph{Step 3.} We now show that $\msf R^{1,2}_1$ and $\msf R^{1,2}_2$ are in fact monotonically coupled, thereby reaching a contradiction. Denote by $\td {\msf R}^{1,2}$ an independent copy of $\msf R^{1,2}$. (Notice that this is with respect to the ``averaged'' measure $\M$, so $\msf R^{3,4}$ would not qualify as an independent copy of $\msf R^{1,2}$ in this sense.)
By Proposition~\ref{p.mono.coupl}, we need to show that 
\begin{equation}  
\label{e.needto.synchr}
\M \Ll[ \msf R_1^{1,2} < \td {\msf R}_1^{1,2} \text{ and } \td {\msf R}_2^{1,2} < \msf R_2^{1,2} \Rr] = 0.
\end{equation}
We first observe that, by the construction of $\msf R$, we have that for all integers $n,h_1, h_2,p \ge 1$ and $f \in C(\R^{2\times n\times n})$,
\begin{multline*}  
\M \Ll[f ( \msf R^{\le n} ) \Ll(\lambda_{h_1}\msf R_1^{1,n+1}+ \lambda_{h_2} 
\msf R_2^{1,n+1}\Rr)^p \Rr]  \\
= \frac 1 n \M\Ll[ f(\msf R^{\le n})  \Rr] \M\Ll[ \Ll(\lambda_{h_1} \msf R_1^{1,2}+ \lambda_{h_2} \msf R_2^{1,2}\Rr)^p \Rr] + \frac 1 n\sum_{\ell = 2}^n \M \Ll[ f(\msf R^{\le n}) \Ll(\lambda_{h_1} \msf R_1^{1,\ell}+ \lambda_{h_2} \msf R_2^{1,\ell}\Rr)^p  \Rr] .
\end{multline*}
Since every continuous function can be uniformly approximated by a polynomial on compact sets, and using the Cram\'er-Wold theorem, we deduce that conditionally on~$\msf R^{\le n}$, the law of $\msf R^{1,n+1}$ is 
\begin{equation*}  
\frac 1 n \mathrm{Law}(\msf R^{1,2}) + \frac 1 n \sum_{\ell = 2}^n \de_{\msf R^{1,\ell}},
\end{equation*}
 where $\mathrm{Law}(\msf R^{1,2})$ denotes the law of $\msf R^{1,2}$ under $\M$, and $\de_{\msf R^{1,\ell}}$ is the Dirac mass at $\msf R^{1,\ell}$. In particular,
\begin{align*}  
& 2 \M \Ll[  \msf R_1^{1,2} < {\msf R}_1^{1,3} \text{ and } {\msf R}_2^{1,3} < \msf R_2^{1,2}  \Rr] 
\\
& \qquad  = \M \Ll[ \msf R_1^{1,2} < \td {\msf R}_1^{1,2} \text{ and } \td {\msf R}_2^{1,2} < \msf R_2^{1,2} \Rr] + \M \Ll[ \msf R_1^{1,2} < {\msf R}_1^{1,2} \text{ and } {\msf R}_2^{1,2} < \msf R_2^{1,2} \Rr]
\\
& \qquad =  \M \Ll[ \msf R_1^{1,2} < \td {\msf R}_1^{1,2} \text{ and } \td {\msf R}_2^{1,2} < \msf R_2^{1,2} \Rr].
\end{align*}
The statement \eqref{e.needto.synchr} we aim to show is thus equivalent to
\begin{equation}
\label{e.needto2.synchr}
\M \Ll[  \msf R_1^{1,2} < {\msf R}_1^{1,3} \text{ and } {\msf R}_2^{1,3} < \msf R_2^{1,2}  \Rr] = 0.
\end{equation}
The validity of \eqref{e.needto2.synchr} now follows from the fact that $\msf R_1$, $\msf R_2$, and $\msf R_1 + \msf R_2$ are ultrametric, which itself is a consequence of Theorem~\ref{t.gg.ultr}. Indeed, by ultrametricity, we have 
\begin{equation*}  
\msf R_1^{1,2} < \msf R_1^{1,3} \implies \msf R_1^{2,3} = \msf R_1^{1,2},
\end{equation*}
and 
\begin{equation*}  
\msf R_2^{1,3} < \msf R_2^{1,2} \implies \msf R_2^{2,3} = \msf R_2^{1,3},
\end{equation*}
so that
\begin{equation*}  
\msf R_1^{1,2} < \msf R_1^{1,3} \text{ and } \msf R_2^{1,3} < \msf R_2^{1,2} \implies \msf R_1^{2,3} + \msf R_2^{2,3} < \min \Ll( \msf R_1^{1,2} + \msf R_2^{1,2}, \msf R_1^{1,3} + \msf R_2^{1,3} \Rr) ,
\end{equation*}
and the latter statement contradicts the ultrametricity of $\msf R_1 + \msf R_2$. This completes the proof of \eqref{e.needto2.synchr}, and therefore of Theorem~\ref{t.sync}.
\end{proof}

As was apparent in \eqref{e.approxN.hj}, what we ultimately want to use is not only that two overlaps asymptotically become monotonically coupled, but rather that one of the overlaps can essentially be inferred by observing the other. Even if the two overlaps were perfectly synchronized, this can only be true if the law of the observed overlap is sufficiently ``spread out'': in an extreme example, if the observed overlap is deterministic, then the statement of monotone coupling is uninformative, and the conditional variances in \eqref{e.approxN.hj} boil down to regular variances, which need not be small. In the next proposition, we give a precise statement to this effect. That is, we show that if the law of one of the overlaps is sufficiently spread out, then the conditional variance of the other overlap is small. The usefulness of writing finitary versions of the statements of ultrametricity and synchronization appears most clearly here.

\begin{proposition}[Control of conditional variance]
\label{p.cond.exp}
For every $\ep > 0$, there exists $\de > 0$ such that the following holds. Let $\E$, $\la \cdot \ra$, $(R^{\ell,\ell'}_a)$ be as in the statement of Theorem~\ref{t.sync}, and assume that \eqref{e.gg2.delta} holds for every $n,h_1,h_2,p \in \{1,\ldots, \lfloor \de^{-1} \rfloor\}$ and $f \in C(\R^{2\times n\times n})$ satisfying $\|f\|_{L^\infty} \le 1$. Assume furthermore that the law of $R_2^{1,2}$ is of the form
\begin{equation*}  
\frac 1 {k} \sum_{\ell = 1}^k \de_{q_\ell},
\end{equation*}
for some integer $k \ge 1$ and parameters $-1 =q_0 < q_1 < \cdots < q_k \le 1$. We then have
\begin{equation}
\label{e.cond.exp}
\E \la \Ll(R_1^{1,2} - \E \la R_1^{1,2} \vb R_2^{1,2} \ra \Rr)^2 \ra \le \frac {12} k + \ep k^2  \sup_{\ell \in \{0,\ldots, k-1\}} (q_{\ell + 1} - q_\ell)^{-1} .
\end{equation}
\end{proposition}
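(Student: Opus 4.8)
The strategy is to combine Theorem~\ref{t.sync} with a direct computation of the conditional variance for the monotone coupling. Write $R_a := R_a^{1,2}$. Since by hypothesis the law of $R_2$ under $\E\la\cdot\ra$ is $\frac1k\sum_{\ell=1}^k\de_{q_\ell}$, conditioning on $R_2$ reduces to conditioning on the $k$ events $\{R_2=q_\ell\}$, each of probability $\frac1k$; setting $a_\ell := \E\la R_1\1_{\{R_2=q_\ell\}}\ra$ (so that $\E\la R_1\vb R_2=q_\ell\ra = ka_\ell$ and $|a_\ell|\le\frac1k$), the quantity to be bounded is
\[
V := \E\la R_1^2\ra - k\sum_{\ell=1}^k a_\ell^2 .
\]

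First I would record the estimate for the monotone coupling. With $U$ uniform on $[0,1]$ and $F_a^{-1}$ as in \eqref{e.cdfinv.a}, put $\tilde R_a := F_a^{-1}(U)$; then $\tilde R_a$ has the same law as $R_a$, so $\tilde R_2$ has law $\frac1k\sum\de_{q_\ell}$ and $\{\tilde R_2=q_\ell\}$ corresponds to an interval $I_\ell\subset[0,1]$ of length $\frac1k$. The analogue $\tilde V := \E[\tilde R_1^2] - k\sum_\ell(\E[\tilde R_1\1_{\{\tilde R_2=q_\ell\}}])^2$ is the conditional variance of $\tilde R_1$ given $\tilde R_2$, hence equals $\frac1k\sum_\ell\var(F_1^{-1}(U)\mid U\in I_\ell)$. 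Since $F_1^{-1}$ is nondecreasing with values in $[-1,1]$, its oscillations over the intervals $I_\ell$ are each at most $2$ and sum to at most $2$; therefore $\var(F_1^{-1}(U)\mid U\in I_\ell)\le(\osc_{I_\ell}F_1^{-1})^2$ and $\tilde V\le\frac1k\sum_\ell(\osc_{I_\ell}F_1^{-1})^2\le\frac4k$.

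The core step is to transfer, via Theorem~\ref{t.sync}, from $\tilde V$ to $V$. Fix $\de$ to be the threshold of Theorem~\ref{t.sync} associated with a synchronization parameter $\ep_0$ to be chosen at the end; then hypothesis \eqref{e.gg2.delta} for $n,h_1,h_2,p\le\lfloor\de^{-1}\rfloor$ is exactly what is available. Applying \eqref{e.sync} to the function $(x_1,x_2)\mapsto\tau(x_1)^2$, where $\tau\in C^\infty(\R)$ is the identity on $[-1,1]$ and is bounded with bounded derivative, gives $|\E\la R_1^2\ra-\E[\tilde R_1^2]|\le C\ep_0$. Writing $\kappa := \sup_{\ell\in\{0,\dots,k-1\}}(q_{\ell+1}-q_\ell)^{-1}$, choose for each $\ell$ a bump $\chi_\ell\in C^\infty(\R)$ with $0\le\chi_\ell\le1$, $\chi_\ell(q_j)=\1_{\{j=\ell\}}$ for $j\in\{1,\dots,k\}$, and $\|\chi_\ell'\|_{L^\infty}\le C\kappa$ — this is possible since the gaps adjacent to $q_\ell$ are $\ge\kappa^{-1}$. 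Because $R_2$ and $\tilde R_2$ take values only in $\{q_1,\dots,q_k\}$, we have $\chi_\ell(R_2)=\1_{\{R_2=q_\ell\}}$ and $\chi_\ell(\tilde R_2)=\1_{\{\tilde R_2=q_\ell\}}$; applying \eqref{e.sync} to $(x_1,x_2)\mapsto\tau(x_1)\chi_\ell(x_2)$, whose $C^1$ norm is $O(\kappa)$, yields $|a_\ell-\tilde a_\ell|\le C\ep_0\kappa$, where $\tilde a_\ell := \E[\tilde R_1\1_{\{\tilde R_2=q_\ell\}}]$. Since $|a_\ell|,|\tilde a_\ell|\le\frac1k$, this gives $k\sum_\ell|a_\ell^2-\tilde a_\ell^2|\le 2C\ep_0 k\kappa$, so $|V-\tilde V|\le C\ep_0+2C\ep_0 k\kappa\le C'\ep_0 k\kappa$ using $k\kappa\ge\frac12$. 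Combined with $\tilde V\le\frac4k$ and the trivial bound $k\kappa\le k^2\kappa$, choosing $\ep_0 := \min(1,\ep/C')$ gives $V\le\frac4k+\ep k^2\kappa\le\frac{12}{k}+\ep k^2\kappa$, which is \eqref{e.cond.exp}.

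The delicate point is this transfer step: the conditional variance involves the non-Lipschitz indicators $\1_{\{R_2=q_\ell\}}$, which must be smoothed at the price of a Lipschitz constant of order $\kappa$ per bump; the estimate survives only because each $a_\ell$ is of size $O(1/k)$, so the quadratic terms pick up merely $O(\ep_0 k\kappa)$ in total, which is absorbed into $\ep k^2\kappa$ by taking $\ep_0$ an absolute multiple of $\ep$. No separate appeal to ultrametricity (Theorem~\ref{t.gg.ultr}) is needed, since it is already incorporated into Theorem~\ref{t.sync}.
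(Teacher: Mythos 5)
Your proof is correct, and it follows a genuinely different route from the one in the paper. The paper's proof selects a single Lipschitz ``regression'' witness $y\mapsto (F_1^{-1}\ast\rho_k)(\tilde F_2(y))$ --- a mollification of $F_1^{-1}$ composed with a piecewise-linear interpolation of $F_2$ --- applies \eqref{e.sync} once to the squared-error function $(x,y)\mapsto (x - (F_1^{-1}\ast\rho_k)(\tilde F_2(y)))^2$, and then exploits that the conditional expectation is the $L^2$-optimal predictor. You instead expand the conditional variance as a finite combination of moments, $V = \E\la R_1^2\ra - k\sum_\ell a_\ell^2$, transfer each of $\E\la R_1^2\ra$ and the $a_\ell$'s to the monotone coupling via \eqref{e.sync} with tailored smooth test functions (a cutoff $\tau$ and bumps $\chi_\ell$), and bound the transferred variance $\tilde V$ by the elementary oscillation estimate for $F_1^{-1}$. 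Both rely on Theorem~\ref{t.sync} and on the bound $|a_\ell|\le\frac1k$ to control the damage from the $O(\kappa)$ Lipschitz constants; your decomposition sidesteps the mollification $\ast\rho_k$ entirely, so your intrinsic error is $O(\ep_0 k\kappa)$ rather than the paper's $O(\ep_0 k^2\kappa)$ (a factor $k$ gain, though both fit within the stated bound). The paper's construction of an explicit approximating function is perhaps more in the spirit of how the synchronization property is used elsewhere (building a Lipschitz map from one overlap to the other), but your moment decomposition is shorter and gives a marginally sharper constant in the error term.
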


\begin{proof}[Proof of Proposition~\ref{p.cond.exp}]
By Theorem~\ref{t.sync}, we can choose $\de > 0$ sufficiently small that~\eqref{e.sync} holds for every Lipschitz function $f :\R^2 \to \R$. 
We set
\begin{equation*}  
\eta := \inf_{\ell \in \{0,\ldots, k-1\}} (q_{\ell + 1} - q_\ell),
\end{equation*}
and, for every $a \in \{1,2\}$ and $s \in \R$, 
\begin{equation*}  
F_a(s) := \E \la \1_{\{ R_a^{1,2} \le s \}}\ra,
\end{equation*}
with $F_a^{-1}$ defined as in \eqref{e.cdfinv.a}. 
By assumption, the function $F_2$ is piecewise constant, with discontinuities at $q_1,\ldots, q_k$.
Let $\tilde F_2$ denote the function which coincides with $F_2$ on the set $(-\infty,q_0] \cup \{q_0,\ldots,q_k\} \cup [q_k,+\infty)$, and is affine on each interval $[q_\ell,q_{\ell+1}]$, $\ell \in \{0,\ldots,k-1\}$. The function $\tilde F_2$  satisfies $\|\tilde F_2\|_{L^\infty} \le 1$ and $\|\nabla \tilde F_2\|_{L^\infty} \le \eta^{-1}$. Notice that, for every $u \in [0,1]$,
\begin{equation}  
\label{e.obs.F2F2}
\tilde F_2(F_2^{-1}(u)) = F_2(F_2^{-1}(u)) = k^{-1} {\lceil k u \rceil}.
\end{equation}
We define, for every $x \in \R$,
\begin{equation*}  
\rho_k(x) := k\max(1-|kx|, 0),
\end{equation*}
and observe that $\int \rho_k = 1$ and that the convolution $F_1^{-1} \ast \rho_k$ is a Lipschitz function, with Lipschitz constant bounded by $k^2$.
 For every $x,y \in [-1,1]$, we set
\begin{equation*}  
f(x,y) = (x-(F_1^{-1} \ast \rho_k)(\td F_2(y)))^2.
\end{equation*}
The Lipschitz constant of this function is bounded by $2 \eta^{-1} k^2$, and thus
\begin{equation*}  
\Ll| \E \la f(R_1^{1,2}, R_2^{1,2})\ra - \E \Ll[ f (F_1^{-1}(U), F_2^{-1}(U)) \Rr]  \Rr| \le 3 \ep \eta^{-1}k^2 .
\end{equation*}
Using \eqref{e.obs.F2F2} and the fact that $F_1^{-1}$ takes values in $[-1,1]$ and is monotone, we can estimate the second term on the left side above by
\begin{align*}  
\E \Ll[ \Ll( F_1^{-1}(U) - (F_1^{-1} \ast \rho_k)(k^{-1} \lceil kU \rceil) \Rr) ^2 \Rr] 
& \le 2\E \Ll[ \Ll| F_1^{-1}(U) - (F_1^{-1} \ast \rho_k)(k^{-1} \lceil kU \rceil) \Rr|  \Rr] 
\\
& = 2\sum_{\ell = 0}^{k-1} \int_{\frac \ell k}^{\frac{\ell+1}{k}} \Ll| F_1^{-1}(u) - (F_1^{-1} \ast \rho_k)(k^{-1} \lceil ku \rceil) \Rr| \, \d u
\\
& \le \frac 2 k\sum_{\ell = 0}^{k-1} \Ll(F_1^{-1} \Ll( \frac{\ell+2}{k} \Rr) - F_1^{-1} \Ll( \frac{\ell-1}{k} \Rr) \Rr)
\\
& \le \frac {12} k.
\end{align*}
We have thus shown that
\begin{equation*}  
\E \la \Ll(R_1^{1,2} - (F_1^{-1} \ast \rho_k)(\td F_2(R_2^{1,2})) \Rr)^2 \ra \le 3 \ep \eta^{-1} k^2 + 12 k^{-1},
\end{equation*}
and thus in particular, since the conditional expectation is an $L^2$ projection,
\begin{equation*}  
\E \la \Ll(R_1^{1,2} - \E \la  R_1^{1,2} \vb R_2^{1,2} \ra \Rr)^2 \ra \le 3 \ep \eta^{-1} k^2 + 12 k^{-1}.
\end{equation*}
Up to a redefinition of $\ep$, this is \eqref{e.cond.exp}.
\end{proof}

%
%
%
%
%
%

\section{The free energy as a saddle-point problem?}
\label{s.concave}

This final section has a more speculative flavor, and concerns the possibility to rewrite the limit free energy of models such as the one investigated here in the form of a saddle-point problem, a possibility discussed for instance in \cite{obnoxious}. A strong indication in favor of this possibility comes from the study of certain models of statistical inference. The statistical-inference problem most similar to the spin-glass model studied here is probably that of estimating a non-symmetric rank-one matrix. This problem was investigated in~\cite{mio.nonsym,bmm,ree,HB1}, and it was found there that the free energy could indeed be conveniently represented in the form of a saddle-point problem.

Of course, any quantity can be written as a saddle-point problem, so the relevant question is whether there is some natural way for doing so. The point of view provided by Hamilton-Jacobi equations suggests two natural routes for finding variational formulations of the limit free energy. The first one, available only when the nonlinearity in the equation is convex, consists in writing the Hopf-Lax formula for the solution, see for instance \cite{parisi}. As was already emphasized, the main feature of the model under consideration here is that the nonlinearity in the equation is \emph{not} convex (nor concave). The second possible route is based on the fact that, irrespectively of the structure of the nonlinearity, it is also possible to write the solution of a Hamilton-Jacobi equation as a saddle-point problem, \emph{provided that the initial condition is concave (or convex)}, as was suggested also by Hopf in \cite{hopf} and then confirmed rigorously using the notion of viscosity solutions in  \cite{bareva} (see also \cite{lioroc}).

This second possibility can be applied to good effect in the context of the model of statistical inference studied in \cite{mio.nonsym,bmm,ree,HB1}: as was shown in \cite{HB1}, the relevant Hamilton-Jacobi equation is a finite-dimensional version of \eqref{e.hj}, and the initial condition is convex, thereby allowing to recover the saddle-point formulas obtained in \cite{mio.nonsym,bmm,ree}. 

However, perhaps surprisingly, this strategy does not seem to work in the context of the model under consideration in this paper, and it is the aim of this section to explore this more precisely. 

This point hides an important subtelty, which requires that we introduce more precise language to speak about concavity properties of the initial condition. Indeed, one can endow the set of probability measures with \emph{two} different geometric structures. Perhaps the more immediate one is to think of it as an affine subspace of the space of signed measures. In this point of view, the natural ``straight line'' between the measures $\mu$ and~$\nu$ is given by $t \mapsto (1-t) \mu + t \nu$. The second relevant geometric structure on the space of probability measures is that given by optimal transport. In this second point of view, the natural ``straight line'' between the measures $\mu$ and $\nu$ can be seen as the set of laws of the random variables $(1-t) X_\mu + t X_\nu$, with $t$ varying in $[0,1]$, and where the law of $(X_\mu,X_\nu)$ is an optimal coupling between the measures $\mu$ and $\nu$ (since we are only concerned with one-dimensional measures here, the coupling given by \eqref{e.def.Xnu} is optimal).

These two points of view give rise to two different notions of convexity, which we will call ``affine convexity'' and ``transport convexity'' respectively. (The notion of ``transport convexity'' is sometimes also called ``displacement convexity''.) The subtelty here is that, at least in the simpler setting of mixed $p$-spin models, the initial condition in \eqref{e.hj} is affine-concave, as was shown in \cite{aufche}; 
 but, whether for these $p$-spin models or for the bipartite model investigated here, this initial condition is \emph{not} transport-concave (nor transport-convex). And, since the derivatives in \eqref{e.hj} are transport-type derivatives, it is the notion of transport concavity (or convexity) that would have been required to guarantee saddle-point formulas by the general mechanism described above.

In the remainder of this section, we examine more precisely what  natural attempts at writing saddle-point formulas for the solution to \eqref{e.hj} may look like, and explain why these attempts fail in general (although we do not exclude the possibility that they be valid for some specific choices of the measures $\pi_1$ and $\pi_2$ in \eqref{e.ass.P}).

\subsection{Attempts based on the Hopf formula}

We start by arguing that the initial condition $\psi$ in \eqref{e.hj} is neither transport-concave nor transport-convex in general. This observation is also valid for models with a single type such as mixed $p$-spin models. The transport concavity (or convexity) of the mapping $\mu \mapsto \psi(\mu)$ would imply in particular that the mapping
\begin{equation}  
\label{e.map.psi}
\chi : \Ll\{
\begin{array}{rcl}  
\R_+ & \to & \R \\
h & \mapsto & \psi \Ll( (\de_h,\de_0) \Rr) 
\end{array}
\Rr.
\end{equation}
is concave (or convex). Recall from \eqref{e.def.psi} that 
\begin{equation*}  
\chi(h) = \psi((\de_h,\de_0)) = -\E \log \int \exp \Ll( (2h)^\frac 1 2 z_1 \si_1 - h(\si_1)^2   \Rr) \, \d \pi_1(\si_1),
\end{equation*}
where here $\si_1$ is real-valued, and $z_1$ is a standard one-dimensional Gaussian random variable. Dropping the subscript ``$1$'' on $z_1$ and $\si_1$ to lighten the notation, and denoting by  $\la \cdot \ra$ the corresponding Gibbs measure, we have
\begin{equation*}  
\dr_h \chi = \E \la \si  \si' \ra,
\end{equation*}
and
\begin{align*}  
\dr_h^2 \chi & = \E \la \si \si' \Ll( (2h)^{-\frac 1 2} z(\si+\si') - \si^2 - (\si')^2 \Rr) \ra 
- 2\E \la \si \si' \Ll( (2h)^{-\frac 1 2} z\si'' - (\si'')^2 \Rr) \ra
\\
& = \E \la \si \si' \Ll( (\si + \si')(\si + \si' - 2\si'') - \si^2 - (\si')^2 \Rr) \ra 
\\
& \qquad - 2 \E \la \si \si' \Ll( \si'' (\si + \si' + \si'' - 3 \si''') - (\si'')^2 \Rr) \ra
\\
& = 2 \E \Ll[ \la \si^2 \ra^2 - 4 \la \si^2 \ra \la \si \ra^2  + 3 \la \si \ra^4 \Rr] 
\\
& = 2 \E \Ll[ \Ll( \la \si^2 \ra - \la \si \ra^2 \Rr) \Ll( \la \si^2 \ra - 3 \la \si \ra^2 \Rr)  \Rr] .
\end{align*}
Recall also that when $h = 0$, the Gibbs measure simplifies into being the measure $\pi_1$. It is therefore clear that we can choose the measure $\pi_1$ in such a way that $\dr_h^2 \chi (h = 0)$ has any desired sign: for instance, if $\pi_1$ is the uniform measure on $\{-1,1\}$, then $\la \si \ra = 0$ at $h = 0$, so $\dr_h^2 \chi > 0$; but if we choose $\pi_1$ to be the probability measure on $\{-1,1\}$ such that $\la \si \ra^2 = \frac 1 2$ at $h = 0$, then we have $\la \si^2 \ra = 1 < 3 \la \si \ra^2 = \frac 3 2$, and thus $\dr_h^2 \chi < 0$ at $h = 0$. In both examples, we also have that $\dr_h \chi$ tends to $1$ as $h$ tends to infinity. In the case with $\la \si \ra^2 = \frac 1 2$ at $h = 0$, the derivative $\dr_h \chi$ at $h = 0$ is $\frac 1 2$ and then decreases, but must then tend to $1$. In particular, the function $\dr_h \chi$ is not monotone: that is, the function $\chi$ is neither concave nor convex. 

In a possibly confusing twist, for the most studied case in which $\pi_1$ is the uniform measure on $\{-1,1\}$, one can show that the function $\chi$ is in fact convex. This implies that, at least for the model with a single type, the replica-symmetric solution for this specific choice of measure can in fact be written as a saddle-point problem. But, as is argued here, this is an accident rather than the rule. My understanding is that the solution proposed in \cite{korshe, fyo1, fyo2} is based on this coincidence. As a side note, it is also worth mentioning that the de Ameida-Thouless-type stability criterion employed there is known to be invalid in general, even for models with a single type \cite{pan.gen}. 

As was recalled in \eqref{e.hj.xi} (see also \cite{parisi}), the limit free energy of mixed $p$-spin models can be expressed in terms of the solution $f_1 = f_1(t,\mu) : \R_+ \times \mcl P_2(\R_+) \to \R$ of the equation
\begin{equation}  
\label{e.pde.mixed}
\Ll\{
\begin{aligned}
& \dr_t f_1 - \int \xi(\dr_\mu f_1) \, \d \mu = 0  & \quad \text{on } \ \R_+ \times \mcl P_2(\R_+),
\\
& f_1(0,\cdot ) = \psi_1 & \quad \text{on } \ \mcl P_2(\R_+),
\end{aligned}
\Rr.
\end{equation}
and this solution can be written in variational form using the Hopf-Lax formula: we have
\begin{equation}  
\label{e.hopf-lax}
f_1(t,\mu) = \sup_{\nu \in \mcl P_2(\R_+)} \Ll( \psi_1(\nu) - t \E \Ll[ \xi^* \Ll( \frac{X_\nu - X_\mu}{t} \Rr)  \Rr]   \Rr) ,
\end{equation}
where $X_\mu$, $X_\nu$ are defined according to \eqref{e.def.Xnu}, and
\begin{equation*}  
\xi^*(s) := \sup_{r \ge 0} \Ll( r s - \xi(r) \Rr) .
\end{equation*}
We can rewrite this formula as
\begin{align}  
f_1(t,\mu) &  =  \sup_{\nu \in \mcl P_2(\R_+)} \Ll( \psi_1(\nu) - \E \Ll[ \sup_{r \ge 0} \Ll\{ r (X_\nu - X_\mu) - t \xi(r) \Rr\}\Rr]   \Rr) 
\\
\label{e.nuf}
& =  \sup_{\nu \in \mcl P_2(\R_+)} \inf_{f \in L^2([0,1];\R_+)} \Ll( \psi_1(\nu) - \E \Ll[f(U) (X_\nu - X_\mu) - t \xi(f(U)) \Rr]   \Rr) .
\end{align}
One may wonder whether supremum and infimum can be interchanged in the expression above. If this were the case, it would imply in particular that 
\begin{equation*}  
\psi_1(\mu) = \inf_{f \in L^2([0,1];\R_+)}\sup_{\nu \in \mcl P_2(\R_+)}  \Ll( \psi_1(\nu) - \E \Ll[f(U) (X_\nu - X_\mu) \Rr]   \Rr) .
\end{equation*}
But notice that the supremum over $\nu$ above is an affine function of $X_\mu$; taking the infimum, we find that this would imply the transport concavity of $\psi_1$. But we have argued above that this is not so in general. Similarly, replacing $\inf_f \sup_{\nu}$ by $\sup_f \inf_{\nu}$ in the expression above would lead to the conclusion that $\psi_1$ is transport-convex, which has also been excluded in general. Conversely, if $\psi_1$ were actually transport-concave, then interchanging the supremum and the infimum in \eqref{e.nuf} would be valid; and in general, what we find after the interchange is the solution to the same Hamilton-Jacobi equation, but with the initial condition replaced by its transport-concave envelope.

We now come back to the bipartite model investigated in the present paper. The considerations above raise the question of whether the solution to \eqref{e.hj} can be written as a saddle-point, with respect to the variables $f = (f_1, f_2) \in (L^2([0,1];\R_+))^2$ and $\nu = (\nu_1,\nu_2) \in (\mcl P_2(\R_+))^2$, of the functional
\begin{equation*}  
\psi(\nu) - \E \Ll[\sum_{a = 1}^2 f_a(U)(X_{\nu_a} - X_{\mu_a}) - t f_1(U) f_2(U)\Rr] .
\end{equation*}
But any possible arrangement of $\inf$'s and $\sup$'s leads to a contradiction. If we aim for optimizing first over $f$ and then over $\nu$, in analogy with \eqref{e.nuf}, then this amounts to trying to write down a Hopf-Lax formula although the nonlinearity in the equation is neither convex nor concave. The convex (or concave) dual of the mapping $(x,y) \mapsto xy$ is so degenerate that it is easy to rule out this possibility. On the other hand, if we try to optimize first over $\nu$ and then over $f$, then we face the same situation as above: each possibity would imply either that $\psi$ is transport-convex, or that it is transport-concave, and both have been ruled out in general.

\subsection{A related attempt}
A related attempt at generating a candidate variational formula for the limit free energy of the bipartite model is as follows. In the papers \cite{barcon, pan.multi}, the authors investigate a large class of models covering in particular the situation in which the definition of $H_N(\si)$ in \eqref{e.def.HN} is replaced by
\begin{equation*}  
N^{-\frac 1 2} \sum_{i,j = 1}^N J_{ij} \,  \sigma_{1,i} \, \si_{2,j} + N^{-\frac 1 2} \sum_{a \in \{1,2\}} \sum_{i,j = 1}^N  J^{(a)}_{ij} \,  \sigma_{a,i} \, \si_{a,j},
\end{equation*}
where $(J_{ij}^{(a)})_{a \in \{1,2\}, 1\le i,j \le N}$ are independent centered Gaussian random variables with a fixed variance, independent of $(J_{ij})$. Assuming that the matrix
\begin{equation*}  
A := \begin{pmatrix}  
2\E[(J_{11}^{(1)})^2] & \E[(J_{11})^2] \\
\E[(J_{11})^2] & 2 \E[(J^{(2)}_{11})^2] \\
\end{pmatrix}
\end{equation*}
is positive definite, they derive a variational formula for the free energy of the model. 

One may wonder whether the formula obtained by ignoring the assumption of positive definiteness of the matrix $A$ necessary for their proofs actually matches the prediction given by the Hamilton-Jacobi equation. We will argue here that this is not so. 
We have already seen in the previous subsection that writing up a naive Hopf-Lax formula for \eqref{e.hj} would clearly lead to an invalid prediction. However, the formula given in \cite{barcon,pan.multi}, while equivalent to the Hopf-Lax formula in the case when the matrix $A$ is positive definite, is actually different in outlook, and extends to a different expression in the setting when the matrix $A$ is taken to be the matrix of interest to us here, namely 
\begin{equation}  
\label{e.choice.A}
\begin{pmatrix}  
0 & 1 \\
1 & 0
\end{pmatrix}.
\end{equation}
Assuming that the matrix $A$ is positive definite, we first explain the derivation of the formula in \cite{barcon, pan.multi} starting from the point of view provided by Hamilton-Jacobi equations. One can check (at least formally, and probably rigorously by combining the arguments of \cite{barcon, pan.multi} with those of \cite{HJsoft}) that the relevant Hamilton-Jacobi equation for this model is given by
\begin{equation*}  
\dr_t f - \frac 1 2 \int \dr_\mu f \cdot A \dr_\mu f \, \d \hat \mu,
\end{equation*}
with the same initial condition $\psi$ defined in \eqref{e.def.psi} (that is, $\psi$ does not depend on $A$), and where we used the vector notation $\dr_\mu f := (\dr_{\mu_1} f, \dr_{\mu_2} f)$. 
Since we assume that $A$ is positive definite, we can write down the Hopf-Lax formula
\begin{equation*}  
f(t,\mu) = \sup_{\nu \in (\mcl P_2(\R_+))^2} \Ll( \psi(\nu) - \frac 1 {2t} \E \Ll[ (X_\nu - X_\mu) \cdot A^{-1} (X_\nu - X_\mu) \Rr] \Rr) ,
\end{equation*}
with the notation $X_\nu = (X_{\nu_1}, X_{\nu_2})$. From here, we could replace $A$ by the matrix in \eqref{e.choice.A}, but it is easy to see that with this choice the supremum is infinite. However the formula of \cite{barcon, pan.multi} (for positive definite $A$) has an additional restriction on the support of the pair of measures $\nu$. Under the assumption that $A$ is positive definite, we can indeed write
\begin{equation} 
\label{e.sup.de0}
f(t,(\de_0,\de_0)) = \sup_{\nu} \Ll( \psi(\nu) - \frac 1{2t} \E \Ll[ X_\nu \cdot A^{-1} X_\nu \Rr]  \Rr) ,
\end{equation}
where the supremum is taken over every pair of measures $\nu = (\nu_a)_{a \in \{1,2\}} \in (\mcl P(\R_+))^2$ with the restriction that, denoting by $q_a$ the top of the support of $\nu_a$, 
\begin{equation*}  
A^{-1} \begin{pmatrix}  
q_1 \\ q_2
\end{pmatrix} \le \begin{pmatrix}  
2t \\ 2t
\end{pmatrix},
\end{equation*}
in the sense that the inequality holds component by component. That this additional restriction does not change the value of the supremum in \eqref{e.sup.de0} in the case when $A$ is positive definite can be derived from the Lipschitz estimate on $\psi$ guaranteed by Proposition~\ref{p.continuity}, and arguing as in \cite[Step~4]{parisi}. Blindly replacing the matrix $A$ by that in~\eqref{e.choice.A} thus leads to the formula
\begin{equation}
\label{e.pierro}
\sup_{\nu \in \mcl P([0.2t])^2} \Ll( \psi(\nu)- \frac 1 t \E \Ll[ X_{\nu_1} X_{\nu_2} \Rr]  \Rr) .
\end{equation}
One can verify that this formula does not match the solution to the equation \eqref{e.hj} evaluated at $\mu = (\de_0,\de_0)$. 
For instance, recalling the notation in \eqref{e.ass.P}, we may take $\pi_1$ to be the uniform measure on $\{-1,1\}$, $\pi_2$ to be a non-uniform measure on $\{-1,1\}$, and verify that in this case the solution to the equation satisfies $\dr_t f(0,(\de_0,\de_0)) = 0$, but that this property is not satisfied by the expression in \eqref{e.pierro}. Assuming that the overlaps are concentrated for small $t$, the limit free energy should be described by the equation \eqref{e.naive} in this region, and this would imply that indeed $\dr_t f(0,(\de_0,\de_0)) = 0$.

\appendix

\section{Gaussian integrals}

\subsection{Gaussian integration by parts}

Let $\mu$ be the law of a $d$-dimensional centered Gaussian vector, with covariance matrix $\msf C \in \R^{d \times d}$. We assume (temporarily) that $\msf C$ is invertible. In this case, the measure $\mu$ has a density with respect to the Lebesgue measure on $\R^d$, which is proportional to 
\begin{equation*}  
\exp \Ll( - \frac 1 2 x \cdot \msf C^{-1} x \Rr) .
\end{equation*}
For every bounded and smooth function $F \in C^\infty(\R^d;\R^d)$, we thus have, by integration by parts,
\begin{equation*}  
\int \msf C^{-1} x \cdot F(x) \, \d \mu(x) = \int \nabla \cdot F(x) \, \d \mu(x),
\end{equation*}
or equivalently,
\begin{equation*}  
\int x \cdot F(x) \, \d \mu(x) = \int \nabla \cdot (\msf C F)(x) \, \d \mu(x).
\end{equation*}
This last identity remains valid when $\msf C$ is not invertible, by approximation. In particular, for every bounded and smooth $f \in C^\infty(\R^d;\R)$,
\begin{equation}  
\label{e.gibp}
\int x_1 f(x) \, \d \mu(x) = \sum_{\ell = 1}^d \E \Ll[ x_1 x_\ell \Rr] \int \dr_{x_\ell} f(x) \, \d \mu(x).
\end{equation}
One consequence of this observation is the following result. (At least the first part of it is very classical; the last part is certainly also well-known, but I could not find a precise reference). 
\begin{lemma}
\label{l.gibp}
Let $\Sigma$ be a finite set, let $(x_1(\si), x_2(\si))_{\si \in \Sigma}$ be a centered Gaussian random field with respect to the probability measure $\P$ (with expectation $\E$), and let $P$ be a probability measure on $\Sigma$. For every $a,b \in \{1,2\}$ and $\si,\si' \in \Sigma$, we write
\begin{equation*}  
\msf C_{ab}(\si,\si') := \E \Ll[ x_a(\si) x_b(\si') \Rr] .
\end{equation*}
We denote by $\la \cdot \ra$ the Gibbs measure built from $(x_2(\si))$, so that for every $f : \Sigma \to \R$,
\begin{equation*}  
\la f(\si) \ra := \frac{ \int f(\si) \exp \Ll( x_2(\si) \Rr) \, \d P(\si) } { \int \exp \Ll( x_2(\si) \Rr) \, \d P(\si) },
\end{equation*}
and write $\si',\si''$ for independent copies of the random variable $\si$ under $\la \cdot \ra$. We have
\begin{equation}  
\label{e.gibp1}
\E \la x_1(\si) \ra = \E \la \msf C_{12} (\si,\si) - \msf C_{12} (\si,\si') \ra,
\end{equation}
and
\begin{multline}  
\label{e.gibp2}
\E \la x_1^2(\si) \ra = \E \la \msf C_{11}(\si,\si) \ra + \E \big\langle \Ll(\msf C_{12}(\si,\si) - \msf C_{12}(\si,\si') \Rr)
\\
\Ll(\msf C_{12}(\si,\si) + \msf C_{12}(\si,\si') - 2\msf C_{12}(\si,\si'')  \Rr) \big\rangle.
\end{multline}
More generally, we write $(\si^\ell)_{\ell \ge 1}$ for a sequence of independent copies of the random variable $\si$ under $\la \cdot \ra$.  For every $p \ge 1$, there exists a polynomial $P_p$ (which does not depend on any parameter in the problem) taking as inputs the variables $(\msf C_{ab}(\si^k,\si^{\ell}))_{a,b \in \{1,2\}, k,\ell \ge 1}$ such that 
\begin{equation}  
\label{e.gibpp}
\E \la x_1^p(\si) \ra = \E \la P_p \Ll( \Ll( \msf C_{ab}(\si^k,\si^{\ell})\Rr)_{a,b \in \{1,2\}, k,\ell \ge 1}  \Rr) \ra.
\end{equation}
Moreover, the polynomial only depends on $(\msf C_{11}(\si^k,\si^\ell))_{k,\ell \ge 1}$ and $(\msf C_{12}(\si^k,\si^\ell))_{k,\ell \ge 1}$, and is homogeneous of degree $p$ provided that we count each occurrence of a variable $\msf C_{11}(\si^k,\si^\ell)$ as having degree $2$. 
\end{lemma}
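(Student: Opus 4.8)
The plan is to reduce everything to the finite-dimensional identity \eqref{e.gibp}, applied to the Gaussian vector $(x_1(\si),x_2(\si))_{\si\in\Sigma}$; since $\Sigma$ is finite this is a genuine finite-dimensional statement, and no invertibility of the covariance is needed, as already noted.

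For \eqref{e.gibp1}, write $D := \int e^{x_2(\rho)}\,\d P(\rho)$, so that $\E\la x_1(\si)\ra = \int \E\big[x_1(\rho)\, e^{x_2(\rho)}/D\big]\,\d P(\rho)$. The function $e^{x_2(\rho)}/D$ depends only on the variables $(x_2(\tau))_{\tau\in\Sigma}$, so when \eqref{e.gibp} is applied in the variable $x_1(\rho)$ only its $x_2$-derivatives contribute, and, using $\dr_{x_2(\tau)}D = e^{x_2(\tau)}P(\{\tau\})$,
\begin{equation*}
\E\big[x_1(\rho)\, e^{x_2(\rho)}/D\big] = \sum_{\tau \in \Sigma} \msf C_{12}(\rho,\tau)\,\E\Big[ \delta_{\rho\tau}\,\frac{e^{x_2(\rho)}}{D} - \frac{e^{x_2(\rho)}\,e^{x_2(\tau)}\,P(\{\tau\})}{D^2} \Big].
\end{equation*}
Integrating against $\d P(\rho)$, the diagonal contribution collapses to $\E\la \msf C_{12}(\si,\si)\ra$ and, using $\sum_\tau (\,\cdot\,)P(\{\tau\}) = \int (\,\cdot\,)\,\d P(\tau)$, the remaining contribution is $\E\la \msf C_{12}(\si,\si')\ra$ with $\si,\si'$ two independent replicas; this is \eqref{e.gibp1}. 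Identity \eqref{e.gibp2} follows by the same mechanism: starting from $\E\la x_1^2(\si)\ra = \int \E\big[x_1(\rho)^2\, e^{x_2(\rho)}/D\big]\,\d P(\rho)$, one applies \eqref{e.gibp} to one of the two factors $x_1(\rho)$, which now may also pair with the other factor (producing $\msf C_{11}(\rho,\rho)$), and then applies \eqref{e.gibp} once more to the single $x_1$-factor that survives; tracking the replicas reproduces the right-hand side of \eqref{e.gibp2}.

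For \eqref{e.gibpp} I would run an induction on the number of $x_1$-factors. The statement to carry through the induction is: any expression $\E\la \big(\prod_{j} x_1(\si^{k_j})\big)\,Q\ra$, where $Q$ is a polynomial in the variables $\msf C_{ab}(\si^k,\si^\ell)$ and the $\si^k$ range over finitely many replicas, equals $\E\la \widetilde Q\ra$ for some polynomial $\widetilde Q$ in the same variables; moreover, assigning weight $1$ to each factor $x_1(\cdot)$ and to each occurrence of $\msf C_{12}$, and weight $2$ to each occurrence of $\msf C_{11}$, the weighted degree is preserved, and $\msf C_{22}$ never appears. The inductive step is a single use of \eqref{e.gibp} on one factor $x_1(\si^{k_1})$: the constant $\msf C$-factors differentiate to zero; differentiating another $x_1$-factor deletes two $x_1$'s and creates one $\msf C_{11}$; differentiating the product of Gibbs weights $\prod_j e^{x_2(\si^{k_j})}/D$ deletes one $x_1$-factor and creates one $\msf C_{12}(\si^{k_1},\,\cdot\,)$, where $\cdot$ is either an existing replica (from the $\delta$-type terms) or a newly introduced replica (from differentiating $D^{-1}$). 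In every case the weighted degree is unchanged, no new factor of $x_1$ or $x_2$ is ever created, and the total number of $x_1$-factors strictly decreases; hence the recursion terminates after at most $p$ steps, yielding \eqref{e.gibpp} together with the homogeneity claim, with initial weighted degree $p$.

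The main obstacle is organizational: one must state the inductive hypothesis generally enough — allowing an arbitrary, growing number of replicas and an arbitrary polynomial prefactor $Q$ — that the recursion genuinely closes, while keeping the weighted-degree and $\msf C_{22}$-free bookkeeping consistent at each application of \eqref{e.gibp}. Once the hypothesis is set up correctly, each step is a routine application of the product and quotient rules together with \eqref{e.gibp}.
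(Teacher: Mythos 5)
Your proposal is correct and follows the paper's route: both reduce \eqref{e.gibp1}, \eqref{e.gibp2}, and \eqref{e.gibpp} to repeated applications of the finite-dimensional Gaussian integration by parts identity \eqref{e.gibp}, with the Gibbs weights producing a replica sum via the $\delta$-terms and a fresh replica via differentiation of the normalizing factor. The one place you go beyond the paper's presentation is \eqref{e.gibpp}: the paper performs a single IBP step and then invokes ``induction on $p$'' without spelling out the inductive hypothesis, even though the third term it produces, $\E\la x_1^{p-1}(\si)\,\msf C_{12}(\si,\si')\ra$, is not of the form $\E\la x_1^q(\si)\ra$ — it involves a second replica and a $\msf C$-prefactor. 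Your formulation, carrying an arbitrary polynomial prefactor $Q$ and an arbitrary finite collection of replicas, and inducting on the total number of $x_1$-factors, is precisely the hypothesis needed to make that induction close; the weighted-degree and $\msf C_{22}$-free bookkeeping in the inductive step is also correct. (One small notational slip: the Gibbs weight being differentiated is $\prod_{i=1}^{n} e^{x_2(\si^{i})}/D^{n}$ over \emph{all} $n$ replicas present, not only those carrying an $x_1$-factor; this changes the combinatorial coefficient of the ``new replica'' term to $-n$ but does not affect the structure of the argument.)
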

\begin{proof}
We start by writing
\begin{equation*}  
\E \la x_1(\si) \ra = \int \E \Ll[ x_1(\si) \frac{\exp(x_2(\si))}{\int \exp(x_2(\si')) \, \d P(\si')}  \Rr] \, \d P(\si).
\end{equation*}
We then apply \eqref{e.gibp} to rewrite the inner expectation as
\begin{equation*}  
\msf C_{12}(\si,\si) \E \Ll[\frac{\exp \Ll( x_2(\si) \Rr)}{\int \exp(x_2(\si')) \, \d P(\si')} \Rr] - \int \msf C_{12}(\si,\si') \E\Ll[\frac{\exp(x_2(\si) + x_2(\si'))}{\Ll(\int \exp(x_2(\si'')) \, \d P(\si'')\Rr)^2}\Rr] \, \d P(\si').
\end{equation*}
Combining the two previous displays leads to \eqref{e.gibp1}. The argument for \eqref{e.gibp2} is similar, except that we now need to compute 
\begin{equation}  
\label{e.exp.square}
\E \Ll[ (x_1(\si))^2 \frac{\exp(x_2(\si))}{\int \exp(x_2(\si')) \, \d P(\si')}  \Rr].
\end{equation}
In order to apply \eqref{e.gibp} in this case, we split the square of $x_1(\si)$ into two parts, one of them being incorporated into the function ``$f$'' in \eqref{e.gibp}. We thus find that the quantity in \eqref{e.exp.square} equals
\begin{multline*}  
\msf C_{11}(\si,\si) \E \Ll[\frac{\exp \Ll( x_2(\si) \Rr)}{\int \exp(x_2(\si')) \, \d P(\si')} \Rr] 
+ \msf C_{12}(\si,\si)\E\Ll[\frac{x_1(\si)  \exp \Ll( x_2(\si) \Rr)}{\int \exp(x_2(\si')) \, \d P(\si')} \Rr]
\\
 - \int \msf C_{12}(\si,\si') \E\Ll[\frac{x_1(\si) \exp(x_2(\si) + x_2(\si'))}{\Ll(\int \exp(x_2(\si'')) \, \d P(\si'')\Rr)^2} \Rr]\, \d P(\si').
\end{multline*}
This shows that
\begin{equation*}  
\E \la (x_1(\si))^2 \ra = \E \la \msf C_{11}(\si,\si) \ra + \E \la  x_1(\si) \Ll( \msf C_{12}(\si,\si) - \msf C_{12}(\si,\si')\Rr)  \ra.
\end{equation*}
For every $\si, \si' \in \Sigma$, we define
\begin{equation*}  
\td x_1(\si,\si') := x_1(\si) \Ll( \msf C_{12}(\si,\si) - \msf C_{12}(\si,\si')\Rr).
\end{equation*}
The variables $(\td x_1(\si,\si'), x_2(\si) + x_2(\si'))_{\si,\si' \in \Sigma}$ form a centered Gaussian field, with
\begin{align*}  
\E \Ll[ \td x_1(\si,\si') (x_2(\si'') + x_2(\si''')) \Rr] 
&  = \Ll( \msf C_{12}(\si,\si) - \msf C_{12}(\si,\si')\Rr)\Ll( \msf C_{12}(\si,\si'') + \msf C_{12}(\si,\si''')\Rr).
\end{align*}
Applying \eqref{e.gibp1}, we deduce that 
\begin{multline*}  
\E \la  x_1(\si) \Ll( \msf C_{12}(\si,\si) - \msf C_{12}(\si,\si')\Rr)  \ra = \E \big\langle \Ll(\msf C_{12}(\si,\si) - \msf C_{12}(\si,\si') \Rr)
\\
\Ll(\msf C_{12}(\si,\si) + \msf C_{12}(\si,\si') - \msf C_{12}(\si,\si'') - \msf C_{12}(\si,\si''') \Rr) \big\rangle,
\end{multline*}
and replacing $\si'''$ by $\si''$ in the expression above does not change its value. This completes the proof of \eqref{e.gibp2}. For~\eqref{e.gibpp}, we apply \eqref{e.gibp} again to rewrite
\begin{equation*}  
\E \Ll[ x_1^p(\si) \frac{\exp(x_2(\si))}{\int \exp(x_2(\si')) \, \d P(\si')}  \Rr]
\end{equation*}
as
\begin{multline*}  
\msf C_{11}(\si,\si) (p-1) \E \Ll[\frac{x_1^{p-2}(\si) \exp \Ll( x_2(\si) \Rr)}{\int \exp(x_2(\si')) \, \d P(\si')} \Rr] 
+ \msf C_{12}(\si,\si)\E\Ll[\frac{x_1^{p-1}(\si)  \exp \Ll( x_2(\si) \Rr)}{\int \exp(x_2(\si')) \, \d P(\si')} \Rr]
\\
 - \int \msf C_{12}(\si,\si') \E\Ll[\frac{x_1^{p-1}(\si) \exp(x_2(\si) + x_2(\si'))}{\Ll(\int \exp(x_2(\si'')) \, \d P(\si'')\Rr)^2} \Rr]\, \d P(\si').
\end{multline*}
We can then obtain \eqref{e.gibpp} by induction on $p$. 
\end{proof}

\subsection{Existence of Gaussian process}

The next lemma serves to guarantee that the Gaussian random field introduced in \eqref{e.cov.HNah} indeed exists.
\begin{lemma}
\label{l.exist.HNah}
Let $p ,k,N \ge 1$ be integers, and $\lambda_1,\lambda_2 \ge 0$. There exists a centered Gaussian field $(X(\si,\al))_{\si \in \R^N, \al \in \N^k}$ such that, for every $\si,\si' \in \R^N$ and $\al,\al' \in \N^k$,
\begin{equation}  
\label{e.exist.HNah}
\E \Ll[ X(\si,\al) X(\si',\al') \Rr] = \Ll( \lambda_1 \, \si \cdot \si' + \lambda_2 \, \al \wedge \al' \Rr)^p,
\end{equation}
where we recall that the notation $\al \wedge \al'$ was introduced in \eqref{e.def.wedge}. 
\end{lemma}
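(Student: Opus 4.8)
The plan is to reduce the statement to a positive-semidefiniteness check and then invoke the standard construction of a Gaussian process from a covariance kernel. Set
\begin{equation*}
K(\si,\al;\si',\al') := \Ll( \lambda_1 \, \si \cdot \si' + \lambda_2 \, \al \wedge \al' \Rr)^p,
\end{equation*}
which is manifestly symmetric under exchanging $(\si,\al)$ and $(\si',\al')$. Once we know that $K$ is positive semidefinite as a kernel on $\R^N \times \N^k$ — meaning that for every finite family $(\si^{(1)},\al^{(1)}),\ldots,(\si^{(m)},\al^{(m)})$ the matrix $\big(K(\si^{(i)},\al^{(i)};\si^{(j)},\al^{(j)})\big)_{1\le i,j\le m}$ is symmetric and positive semidefinite — the Kolmogorov extension theorem produces a centered Gaussian field on the (uncountable) index set $\R^N\times\N^k$ with covariance $K$, which is exactly \eqref{e.exist.HNah}. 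So the entire content of the lemma is the positive-semidefiniteness of $K$.

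To establish this I would first exhibit an explicit Hilbert space embedding realizing the base kernel $\lambda_1\si\cdot\si'+\lambda_2\,\al\wedge\al'$ as an inner product. Let $H := \R^N \oplus \bigoplus_{\ell=1}^k \ell^2(\N^\ell)$, and for each $\ell \in \{1,\ldots,k\}$ let $(e_\beta)_{\beta\in\N^\ell}$ be the canonical orthonormal basis of $\ell^2(\N^\ell)$. Define
\begin{equation*}
\Phi(\si,\al) := \Ll( \sqrt{\lambda_1}\, \si, \ \sqrt{\lambda_2}\, e_{\al_{|1}}, \ \ldots, \ \sqrt{\lambda_2}\, e_{\al_{|k}} \Rr) \in H
\end{equation*}
(this uses $\lambda_1,\lambda_2\ge 0$). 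Since $\la e_{\al_{|\ell}}, e_{\al'_{|\ell}}\ra = \1_{\{\al_{|\ell} = \al'_{|\ell}\}}$, and since $\al_{|\ell} = \al'_{|\ell}$ forces $\al_{|\ell'} = \al'_{|\ell'}$ for all $\ell'\le\ell$, the definition \eqref{e.def.wedge} gives $\{\ell\le k : \al_{|\ell}=\al'_{|\ell}\} = \{1,\ldots,\al\wedge\al'\}$, hence $\sum_{\ell=1}^k \1_{\{\al_{|\ell}=\al'_{|\ell}\}} = \al\wedge\al'$. Therefore $\la \Phi(\si,\al), \Phi(\si',\al')\ra_H = \lambda_1\si\cdot\si' + \lambda_2\,\al\wedge\al'$, and in particular this base kernel is positive semidefinite, being a Gram kernel. (This is the Hilbert embedding of the $\al$ variables alluded to after \eqref{e.cov.HNah}.)

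Finally I would take the $p$-th power. Passing to tensor powers, $\la \Phi(\si,\al)^{\otimes p}, \Phi(\si',\al')^{\otimes p}\ra_{H^{\otimes p}} = \la\Phi(\si,\al),\Phi(\si',\al')\ra_H^{\,p} = K(\si,\al;\si',\al')$, so $K$ is again a Gram kernel, hence positive semidefinite; equivalently, one may invoke the Schur product theorem, $K$ being the $p$-fold entrywise product of a positive-semidefinite kernel with itself. This finishes the verification, and hence the proof via Kolmogorov. As an alternative to Kolmogorov one may take an isonormal Gaussian process $W$ indexed by $H^{\otimes p}$ (that is, a centered Gaussian family $(W(h))_{h\in H^{\otimes p}}$ with $\E[W(h)W(h')]=\la h,h'\ra$) and set $X(\si,\al):= W\big(\Phi(\si,\al)^{\otimes p}\big)$, whose covariance is exactly $K$. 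I do not expect a genuine obstacle here: the only points demanding a little care are making the embedding $\Phi$ precise over the infinite index set $\N^k$, and citing the standard facts (the Schur product theorem, and the existence of a centered Gaussian field with a prescribed positive-semidefinite covariance) in the correct form.
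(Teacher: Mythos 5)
Your proof is correct and takes essentially the same route as the paper: both realize the base kernel $\lambda_1\,\si\cdot\si' + \lambda_2\,\al\wedge\al'$ as a Gram kernel by embedding $\al$ via orthonormal vectors attached to its ancestors, pass to $p$-fold tensor powers to obtain the $p$-th power of the kernel as an inner product, and then construct the Gaussian field as white noise paired against these tensor vectors. The paper packages this slightly differently (it truncates the tree to $\{0,\ldots,n\}^k$, builds the field in finite dimensions, and then patches across $n$ via Kolmogorov), whereas you work directly in the infinite-dimensional Hilbert space, but the core construction is identical.
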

\begin{proof}
Recall the definition of the tree $\mcl A$ in \eqref{e.def.mclA}. For each $n \in \N$, we define the finite approximation
\begin{equation*}  
\mcl A_n := \{0,\ldots,n\}^{0} \cup \cdots \cup \{0,\ldots,n\}^{k} ,
\end{equation*}
again with the understanding that $\{0,\ldots,n\}^{0} = \{\emptyset\}$, and we denote the set of leaves by $\mcl L_n := \{0,\ldots,n\}^{k}$.
By Kolmogorov's extension theorem, it suffices to construct a Gaussian process $(X_n(\si,\al))_{\si \in \R^N, \al \in \mcl L_n}$ such that \eqref{e.exist.HNah} holds for every $\si,\si' \in \R^N$ and $\al,\al' \in \mcl L_n$. Let $(f_\al)_{\al \in \mcl A_n}$ be an orthonormal basis of $\R^{|\mcl A_n|}$, and for each $\al \in \mcl L_n$, let
\begin{equation*}  
g_\al := \sum_{\ell = 1}^k f_{\al_{|\ell}},
\end{equation*}
so that for every $\al,\al' \in \mcl L_n$,
\begin{equation}  
\label{e.product.g}
g_\al \cdot g_{\al'} = \al \wedge \al'.
\end{equation}
Viewing $(\sqrt{\lambda_1} \si, \sqrt{\lambda_2} g_\al)$ as a vector in $\R^N \times \R^{|\mcl A_n|}$, we consider the $p$-fold tensor product
\begin{equation*}  
\Ll(\sqrt{\lambda_1} \si, \sqrt{\lambda_2} g_\al\Rr) ^{\otimes p} \in \Ll(\R^N \times \R^{|\mcl A_n|}\Rr)^{\otimes p}.
\end{equation*}
Recall that, if we denote by $(e_i)_{i \in \{1,\ldots,N\}}$ an orthonormal basis of $\R^N$, then an orthonormal basis of the tensor product $(\R^N \times \R^{|\mcl A_n|})^{\otimes p}$ is given by
\begin{equation*}  
\mcl B := \Ll\{ v_1 \otimes \cdots \otimes v_p \ : \ v_1, \ldots v_p \in \{e_i, i \in \{1,\ldots, N\} \} \cup \{f_\al, \al \in \mcl A_n \} \Rr\}.
\end{equation*}
We now give ourselves a standard Gaussian vector $W$ taking values in $\R^{|\mcl B|}$, and define
\begin{equation*}  
X(\si,\al) := W \cdot \Ll(\sqrt{\lambda_1} \si, \sqrt{\lambda_2} e'_\al\Rr) ^{\otimes p},
\end{equation*}
so that for every $\si,\si' \in \R^N$ and $\al,\al' \in \mcl L_n$,
\begin{align*}  
\E \Ll[ X(\si,\al) X(\si',\al') \Rr] 
& = \Ll(\sqrt{\lambda_1} \si, \sqrt{\lambda_2} g_\al\Rr) ^{\otimes p} \cdot \Ll(\sqrt{\lambda_1} \si', \sqrt{\lambda_2} g_{\al'}\Rr) ^{\otimes p}
\\
& = \Ll(\Ll(\sqrt{\lambda_1} \si, \sqrt{\lambda_2} g_\al\Rr) \cdot \Ll(\sqrt{\lambda_1} \si', \sqrt{\lambda_2} g_{\al'}\Rr) \Rr)^{p}
\\
& = \Ll(\lambda_1 \, \si \cdot \si' + \lambda_2 \, \al \wedge \al'\Rr)^{p},
\end{align*}
where we used \eqref{e.product.g} in the last step. 
\end{proof}

\noindent \textbf{Acknowledgements.} I was partially supported by the NSF grant DMS-1954357.

\small
\bibliographystyle{abbrv}
\bibliography{bipartite}

\end{document}